\newtheoremstyle{myplain}      {10pt}{10pt}{\itshape}{}{\bfseries}{.}{.5em}{}
\newtheoremstyle{mydefinition} {10pt}{10pt}{}{}{\bfseries}{.}{.5em}{}
\newtheoremstyle{myremark}     {10pt}{10pt}{}{}{\bfseries}{.}{.5em}{}
\theoremstyle{myplain}
\newtheorem{thm}{Theorem}[section]
\newtheorem{lem}         [thm]{Lemma}
\newtheorem{pro}         [thm]{Proposition}
\newtheorem{cor}         [thm]{Corollary}
\theoremstyle{mydefinition}
\newtheorem{defn}    [thm]{Definition}
\theoremstyle{myremark}
\newtheorem{rem}        [thm]{Remark}
\numberwithin{equation} {section}
\DeclareMathOperator{\id}{id}
\let\ker\relax                                        
\DeclareMathOperator{\ker}{Ker}
\newcommand{\Aut}{\operatorname{Aut}}
\newcommand{\Inn}{\operatorname{Inn}}
\newcommand{\Mor}{\operatorname{Mor}}
\newcommand{\emp}{\varnothing}
\newcommand{\Rep}{\operatorname{Rep}}
\newcommand{\redu}{\text{red}}
\newcommand{\kar}{\text{char}}
\newcommand{\ab}{\text{ab}}
\DeclareFontFamily{U}{matha}{\hyphenchar\font45}
\DeclareFontShape{U}{matha}{m}{n}{
      <5> <6> <7> <8> <9> <10> gen * matha
      <10.95> matha10 <12> <14.4> <17.28> <20.74> <24.88> matha12
      }{}
\DeclareSymbolFont{matha}{U}{matha}{m}{n}
\DeclareFontFamily{U}{mathx}{\hyphenchar\font45}
\DeclareFontShape{U}{mathx}{m}{n}{
      <5> <6> <7> <8> <9> <10>
      <10.95> <12> <14.4> <17.28> <20.74> <24.88>
      mathx10
      }{}
\DeclareSymbolFont{mathx}{U}{mathx}{m}{n}
\DeclareMathSymbol{\otop}         {2}{matha}{"6A}
\title[Normal Subgroups, Center and Inner Automorphisms of CQG]{Normal Subgroups, Center and Inner Automorphisms of Compact Quantum Groups}
\author{Issan Patri}
\address{Institute of Mathematical Sciences\\
IV Cross Road, CIT Campus\\
Taramani\\
Chennai 600 113\\
Tamil Nadu, India.}
\email{issanp@imsc.res.in}
\subjclass[2010]{Primary 46L89; Secondary 20G42, 81R50}
\keywords{Compact Quantum Groups, Normal Subgroups, Inner Automorphisms, Center, Commutator Subgroup}
\begin{document}
\begin{abstract}
We introduce a class of automorphisms of compact quantum groups which may be thought of as inner automorphisms and explore the behaviour of normal subgroups of compact quantum groups under these automorphisms. We also define the notion of center of a compact quantum group and compute the center for several examples. We briefly touch upon the commutator subgroup of a compact quantum group and discuss how its relation with the center can be different from the classical case.
\end{abstract}

\maketitle

\section{Introduction}
The theory of quantum groups has its roots in the work of G.I. Kac, in his attempt to extend Pontryagin Duality to the case of non-commutative groups. However, it really came to the fore in the 1980's in the pathbreaking work of Drinfeld, Jimbo and Woronowicz, done independently, at roughly around the same time. While Drinfeld \cite{Drin} and Jimbo \cite{Jimbo} constructed deformations of the universal enveloping algebra of simple Lie algebras, the approach of Woronowicz was different. Inspired by Gelfand Duality for commutative $c^\ast$-algebras, Woronowicz, in a series of seminal papers \cite{Woro-PRIMS}\cite{Woro-CMP87}\cite{Woro-Invent}, initiated the study of what are now called compact quantum groups.

Gelfand Duality identifies commutative unital $c^\ast$-algebras as the function algebra of some compact Hausdorff space. This has led to the viewpoint that $c^\ast$-algebras are, in some sense, function algebra of some ``non-commutative'' space. However, function algebras of compact groups fail to remember the group structure, and hence, some additional structure is needed. Woronowicz in \cite{Woro-PRIMS}\cite{Woro-CMP87}\cite{Woro-Invent} identified this extra structure and generalised it to the non-commutative set-up. He also proved the existence and uniqueness of Haar Measure and a Tannaka-Krein type duality theorem in this more general setting, and constructed examples by deforming the compact group $SU(n)$. Several examples have since been constructed \cite{Ban96}\cite{Wang-VD}\cite{Wang-CMP98}.

The notion of subgroups of compact quantum groups was introduced by Podles \cite{Podles} and that of normal subgroups were introduced by Wang \cite{Wang-CMP95}\cite{Wang-JFA}. 

The definition of normal subgroups of compact quantum groups is representation-theoretic, and markedly different from the usual definition of in the classical case of compact groups. This is of course due to a lack of a definition of ``inner'' automorphism. This is one of the motivations of this paper. One of our aims is to define and study a class of automorphisms of a compact quantum group, which in the classical case, can be used to define (closed) normal subgroups. These automorphisms may be thought of as ``inner'' automorphisms. 

In the classical case, the group of inner automorphism is isomorphic to the quotient of the group by its center. This is a motivation to define and study the center of a compact quantum group. Besides, the center, alongwith the commutator subgroup and the identity component, is the most important normal subgroup of a compact group. We briefly study the commutator subgroup and its relationship with the center. The identity component has been studied in great detail in \cite{Pinz}. 

The organisation of the paper and a short description of results are as follows-\\
Section 2 recalls some key notions in the theory of compact quantum groups. Some permanence properties of the notion of co-amenability are also exhibited (Proposition 2.7, Proposition 2.8).\\
Section 3 is devoted to the study of automorphisms of compact quantum groups. We study them from a duality point of view (Proposition 3.2), define a class of automorphisms which can be thought of as ``inner'' automorphisms. This class of automorphisms forms a closed normal subgroup of the group of automorphisms and can be given a suitable topology. We study some topological properties of this subgroup. For instance, it is shown that it is compact for compact quantum matrix groups (Theorem 3.4) and that the quotient of the automorphism group of a compact quantum group by this subgroup gives us a totally disconnected group (Theorem 3.7). We also briefly discuss Hopficity of compact quantum groups.\\
Section 4 covers the stability of normal subgroups of compact quantum groups under the action of these ``inner'' automorphisms. In particular, it is shown that under suitable conditions, a normal subgroup of a compact quantum group is indeed stabilised by any such automorphism (Theorem 4.5).\\
Section 5 studies the converse problem, i.e. the normality of subgroups that are stabilised by all such automorphisms. By means of explicit examples, it is shown that this is not the case in general.\\
Section 6 is devoted to defining the center of a compact quantum group. We start with Wang's notion of central subgroup \cite{Wang-JFA} and derive an equivalent representation-theoretic definition (Theorem 6.3). Then, using ideas of McMullen \cite{Mcmullen}, and using Tannaka-Krein duality, the existence of center is established (Proposition 6.11).\\
Section 7 is devoted to calculation of center of some well-known examples of compact quantum groups. It is shown that compact quantum groups with identical fusion rules have isomorphic center (Theorem 7.1). We also show that the natural generalisation to the quantum group case of the result of M\"uger \cite{Muger} that the chain group, defined for compact groups by Baumg\"artel and Lled\'o \cite{Baum-L}, coincides with the dual of the center of the compact group, is also true.   \\
Finally, in the Appendix, we briefly discuss the commutator subgroup, and by means of an example, show how the relationship between the commutator subgroup and the center of compact quantum matrix groups can be different from that of the commutator subgroup and the center of compact Lie groups.

\section{Preliminaries}
\begin{defn}
  A compact quantum group $G = (A, \Phi)$ is a unital $c^*$-algebra  $A$ together with a comultiplication $\Phi$ which is a coassociative $*$-homomorphism: 
\[\Phi: A \to A \otimes A\]
such that the sets $(A \otimes 1)\Phi(A)$ and $(1 \otimes A)\Phi(A)$ are dense in $A \otimes A$.  
\end{defn}
Now, let $\mathcal{H}$ be a finite dimensional hilbert space with an orthonormal basis given by $\{e_1, e_2, \dots, e_n\}$ and with $e_{ij}$ the corresponding system of matrix units in $B(\mathcal{H})$. A unitary element $u=\sum_{i,j}e_{ij}\otimes u_{ij}$ in $B(\mathcal{H})\otimes A$ is said to be a finite dimensional representation for the compact quantum group $G=(A,\Phi)$ if $\Phi(u_{ij})=\sum_k u_{ik}\otimes u_{kj}$ for all $i,j\in \{1,2,...,n\}$. A finite dimensional representation is said to be irreducible if it has no invariant subspace, see for example \cite{Maes-VD} and \cite{Woro-Notes}.
\begin{defn}
  A compact matrix quantum group $G = (A, \Phi, u)$ is a  triple such that $G=(A, \Phi)$ is a compact quantum group and $u$ is a finite dimensional representation of $G$ such that any irreducible representation is a sub-representation of some tensor power of $u$. 
\end{defn}
It is truly remarkable that a lot of the classical theory of compact groups passes to this more general setting. In particular, one can show the existence of Haar measure and prove Peter-Weyl Theorem. Also, as in the classical case, there is a canonical dense Hopf algebra, denoted by $\mathcal{A}$, generated by the matrix entries of irreducible representations of the compact quantum group $G=(A, \Phi)$. This Hopf algebra is $*$-closed and is equipped with a co-unit $\epsilon_G :\mathcal{A}\rightarrow \mathbb{C}$, which is a $*$-homomorphism satisfying, for all $a\in \mathcal{A}$, 
$$(\epsilon_G\otimes \id)\Phi(a)=(\id\otimes \epsilon_G)\Phi(a)=a$$
It also has an antipode, which is an anti-multiplicative map $\kappa_G:\mathcal{A}\rightarrow \mathcal{A}$ satisfying for all $a\in \mathcal{A}$ 
$$m((\kappa_G\otimes \id)\Phi(a))=m((\id\otimes \kappa_G)\Phi(a))=\epsilon_G(a)\cdot 1$$
where $m:A\otimes A\to A$ denotes the multiplication map, $m(a\otimes b)=ab$ 
\begin{defn}
  A compact quantum group $G=(A, \Phi)$ is said to be coamenable if the Haar measure on it is faithful on $A$ and the counit is norm bounded on $\mathcal{A}$, so can be extended to $A$.  
\end{defn}
While commutative compact quantum groups i.e. compact quantum group with commutative Hopf  C*-algebra, which correspond exactly to the classical case of compact groups, are always coamenable, there are many examples of compact quantum groups which are not co-amenable. For example, the co-commutative quantum group $C^\ast(\Gamma)$, the full group $c^*$-algebra of the discrete group $\Gamma$, is co-amenable if and only if $\Gamma$ is amenable in the group theoretic sense \cite{BMT}.  

Closely related are the notions of maximal and reduced quantum groups associated with a compact quantum group $G = (A, \Phi)$, $G_{\max}=(A_{\max}, \Phi_{\max})$ and $G_{\redu}=(A_{\redu}, \Phi_{\redu})$ respectively. $A_{\max}$ is obtained by a universal property construction, the norm being the supremum over all possible $\ast$-representations of $\mathcal{A}$ into $B(H)$, while $A_{\redu}$ is the $c^{\ast}$-algebra that is the image of $A$ under the GNS map with respect to the Haar measure $h_G$. While $\Phi_{max}$ is the natural extension of $\Phi$ from $\mathcal{A}$ to $A_{\max}$, $\Phi_{\redu}$ is the unique $*-$homomorphism satisfying $$\Phi_{\redu}(\rho_{h_G}(.))=(\rho_{h_G}\otimes\rho_{h_G})\Phi(\cdot)$$ where $\rho_{h_g}$ denotes the GNS representation with respect to $h_G$.

In the coamenable case, $A_{\max} = A = A_{\redu}$. In the cocommutative case, $C^\ast(\Gamma)$ corresponds to the maximal quantum group and $C_r^\ast(\Gamma)$ to the reduced version. Although the underlying $c^\ast$-algebras of $G_{\max}$, $G_{\redu}$ and more generally, any other $c^\ast$-completion of the canonical dense Hopf $\ast$-algebra $\mathcal{A}$  may be different, they can be considered as the same compact quantum group as their representation theory is exactly the same since it is encoded in $\mathcal{A}$. 

We now move to the notion of subgroups introduced by Podles \cite{Podles} and normal subgroups introduced by Wang \cite{Wang-CMP95}\cite{Wang-JFA}. We note that the term subgroup will mean quantum subgroup and will be used interchangeably with it.

\begin{defn}
  A compact quantum group $H = (B, \Psi)$ is said to be a quantum subgroup of $G=(A,\Phi)$ if there exists a surjective $*$-homomorphism $\rho: A \rightarrow B$ such that 
\[(\rho\otimes\rho)\Phi=\Psi \circ \rho\]
This $\rho$ will be called the corresponding surjection.
\end{defn}
Associated with a quantum subgroup $H$ of $G$ are the left coset space and the right coset space given by: 
\begin{align*}
  A_{G/H} &:= \{a \in A \mid (\id \otimes \rho) \Phi(a) = a \otimes 1\}\\
  A_{H\backslash G} &:= \{a \in A \mid (\rho \otimes \id) \Phi(a) = 1 \otimes a\}
\end{align*}
respectively. \\
These spaces have natural conditional expectation onto them given by:
\[E_{G/H}:=(\id \otimes h_H \circ \rho)\Phi\]
and 
\[E_{H\backslash G}:=(h_H \circ \rho \otimes \id)\Phi\]
respectively.

\begin{defn}
A quantum subgroup $N$ of a compact quantum group $G$ is said to be normal if $A_{G/N}=A_{N\backslash G}$. In this case, $G/N=(A_{G/N},\Phi_{|A_{G/N}})$ is a compact quantum group itself.
\end{defn}

\begin{thm}
 [\cite{Wang-JFA}] 
A quantum subgroup $N$ of $G$ is normal if and only if any of the following equivalent conditions hold -- 
\begin{enumerate}
\item $\Phi(A_{G/N}) \subseteq A_{G/N} \otimes A_{G/N}$
\item $\Phi(A_{N\backslash G}) \subseteq A_{N\backslash G} \otimes A_{N\backslash G}$
\item The multiplicity of $1_N$, the trivial representation of $N$ in $\pi_{| N}$, where $\pi$ is any irreducible representation of $A$, is either $0$ or $d_\pi$, the dimension of $\pi$.  
\end{enumerate}
\end{thm}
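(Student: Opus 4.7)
The plan is to establish the three-way equivalence via condition (3), which is the most transparent in Peter--Weyl language, and to deduce (1) and (2) from it using the Hopf algebra structure of the canonical dense Hopf $*$-algebra $\mathcal{A}$.

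First I would describe $\mathcal{A}\cap A_{G/N}$ and $\mathcal{A}\cap A_{N\backslash G}$ in Peter--Weyl terms. Fix an irreducible representation $\pi$ of $G$ with matrix entries $u^\pi_{ij}$, let $d_\pi$ be its dimension, and set $k:=\operatorname{mult}(1_N,\pi|_N)$. Choose an orthonormal basis $\{e_1,\dots,e_{d_\pi}\}$ of the representation space whose first $k$ vectors span the trivial $N$-isotypical component of $\pi|_N$; unitarity of $\rho(\pi)$ then forces it to be block-diagonal with top-left block the identity, so $\rho(u^\pi_{ij})=\delta_{ij}\cdot 1$ whenever $i\le k$ or $j\le k$. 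Substituting $a=\sum c_{ij}u^\pi_{ij}$ into $(\id\otimes\rho)\Phi(a)=a\otimes 1$ and equating coefficients of the linearly independent $u^\pi_{im}$ reduces the condition to $\rho(\pi)(c_i\otimes 1)=c_i\otimes 1$ for each row vector $c_i:=(c_{ij})_j$, so each $c_i$ lies in the fixed subspace of $\rho(\pi)$, namely $\operatorname{span}(e_1,\dots,e_k)$. This yields $A_{G/N}\cap\mathcal{A}_\pi=\operatorname{span}\{u^\pi_{ij}:j\le k\}$, and the symmetric calculation gives $A_{N\backslash G}\cap\mathcal{A}_\pi=\operatorname{span}\{u^\pi_{ij}:i\le k\}$ (noting that $\rho(\pi)^T$ has the same trivial multiplicity $k$, with fixed subspace also equal to $\operatorname{span}(e_1,\dots,e_k)$ in the adapted basis). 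These two subspaces of $\mathcal{A}_\pi$ coincide iff $k\in\{0,d_\pi\}$, which is condition (3).

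To upgrade this block-wise comparison to the closed equality $A_{G/N}=A_{N\backslash G}$, I would use that $\mathcal{A}\cap A_{G/N}$ is norm-dense in $A_{G/N}$: the conditional expectation $E_{G/N}$ preserves $\mathcal{A}$ (since $\Phi$ does) and projects $A$ onto $A_{G/N}$, with an analogous statement for $A_{N\backslash G}$. For (3) $\Rightarrow$ (1), condition (3) makes $\mathcal{A}\cap A_{G/N}$ the algebraic direct sum of entire blocks $\mathcal{A}_\pi$ over irreducibles with $k_\pi=d_\pi$; the coproduct formula $\Phi(u^\pi_{ij})=\sum_k u^\pi_{ik}\otimes u^\pi_{kj}$ then closes $\Phi$ inside $(\mathcal{A}\cap A_{G/N})\otimes(\mathcal{A}\cap A_{G/N})$, and continuity with density yields (1). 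For the converse, I argue by contradiction: if some irreducible $\pi$ has $0<k_\pi<d_\pi$, fix $j_0\le k_\pi$, so that $u^\pi_{1j_0}\in A_{G/N}$; apply the completely positive projection $E_{G/N}\otimes E_{G/N}$ to $\Phi(u^\pi_{1j_0})=\sum_k u^\pi_{1k}\otimes u^\pi_{kj_0}$ and use $E_{G/N}(u^\pi_{1k})=u^\pi_{1k}$ for $k\le k_\pi$ and $0$ otherwise; condition (1) then forces $\sum_{k>k_\pi}u^\pi_{1k}\otimes u^\pi_{kj_0}=0$, contradicting the Peter--Weyl linear independence of $\{u^\pi_{ij}\}$. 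The equivalence (2) $\Leftrightarrow$ (3) follows by the mirror argument, with the roles of $A_{G/N}$ and $A_{N\backslash G}$ swapped.

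The main subtlety I anticipate is the interface between the C$^*$-tensor product --- in which conditions (1) and (2) are naturally stated --- and the algebraic tensor product, where Peter--Weyl linear independence is the decisive tool. The reduction is legitimate because every key computation takes place on the coproduct of a single matrix coefficient, which lies in the finite-dimensional algebraic subspace $\mathcal{A}_\pi\otimes\mathcal{A}_\pi$; the image under $E_{G/N}\otimes E_{G/N}$ remains there, and this algebraic subspace embeds injectively into $A\otimes A$, so the equality produced by the projection is already detectable by the Peter--Weyl basis at the algebraic level.
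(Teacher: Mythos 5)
This theorem is stated in the paper with a citation to Wang's \emph{Simple compact quantum groups I} and is not proved there, so there is no in-paper argument to compare against; your proof is correct and is essentially Wang's original one. Its key ingredient --- the block description $A_{G/N}\cap\mathcal{A}_\pi=\operatorname{span}\{u^\pi_{ij}:j\le k_\pi\}$, equivalently $E_{G/N}(u^\pi_{ij})=u^\pi_{ij}$ for $j\le k_\pi$ and $0$ otherwise --- is exactly the formula from Wang's Proposition 2.1 that the paper itself quotes and relies on in Sections 4 and 5, and your handling of the density and algebraic-versus-$c^\ast$-tensor-product issues is sound.
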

This generalises the classical case of compact groups \cite{Hew-Ross} and in the cocommutative case, it can be shown that any subgroup is in fact normal. 
\begin{pro}
  If $G=(A,\Phi)$ has a coamenable normal subgroup $N=(B,\Psi)$ such that $G/N$ is coamenable, then  $G$ is coamenable. 
\end{pro}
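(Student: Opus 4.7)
The plan is to verify the two defining conditions of coamenability for $G$ separately: boundedness of the counit $\epsilon_G$ on $A$, and faithfulness of the Haar state $h_G$.

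The counit extension is the easier half. Since $\rho$ is a morphism of compact quantum groups, it restricts to a Hopf $*$-algebra morphism of the canonical dense Hopf $*$-subalgebras, so $\epsilon_N \circ \rho$ satisfies the counit identity for $\Phi$ on $\mathcal{A}$ and, by uniqueness of counits, equals $\epsilon_G$ there. Coamenability of $N$ makes $\epsilon_N$ norm-bounded on $B$, so $\epsilon_N \circ \rho$ is a bounded $*$-character on $A$ extending $\epsilon_G$. By density the counit identity $(\id \otimes \epsilon_G)\Phi = \id$ then propagates to all of $A$.

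For the Haar state, the identity I would first establish is $h_G = h_{G/N} \circ E_{G/N}$. This rests on two observations. First, $h_G \circ E_{G/N} = h_G$, which follows from left-invariance of $h_G$ via the computation $(h_G \otimes h_N \circ \rho)\Phi(a) = h_N \circ \rho\bigl((h_G \otimes \id)\Phi(a)\bigr) = h_G(a)$. Second, $h_G|_{A_{G/N}} = h_{G/N}$: normality of $N$ ensures $\Phi(A_{G/N}) \subseteq A_{G/N} \otimes A_{G/N}$, so $h_G|_{A_{G/N}}$ is left-invariant on the CQG $G/N$ and hence its unique Haar state. Now assume $h_G(a^* a) = 0$. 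Faithfulness of $h_{G/N}$ (from coamenability of $G/N$) forces $E_{G/N}(a^* a) = 0$. Setting $y := (\id \otimes \rho)\Phi(a) \in A \otimes B$, this reads $(\id \otimes h_N)(y^* y) = 0$, and since $h_N$ is faithful (from coamenability of $N$), the slice map $\id \otimes h_N$ is faithful on positive elements, giving $y = 0$. Finally, applying $\id \otimes \epsilon_N$ and using the counit extension already obtained, $a = (\id \otimes \epsilon_G)\Phi(a) = (\id \otimes \epsilon_N \circ \rho)\Phi(a) = (\id \otimes \epsilon_N)(y) = 0$.

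I expect the main obstacle to be the slice-map faithfulness statement: that $(\id \otimes h_N)(p) = 0$ for $p \geq 0$ in $A \otimes B$ implies $p = 0$. The proof runs through the GNS representation of $h_N$: faithfulness of $h_N$ makes the cyclic vector separating for the image of $B$, hence cyclic for its commutant; viewing $p = z^* z$ as an operator on a tensor product Hilbert space via a faithful representation of $A$, the hypothesis first annihilates $z$ on the cyclic-vector fibre, and then spreads to a dense subspace by commuting $z$ past elements of the commutant. Apart from this standard lemma, the remainder reduces to the Hopf-algebraic identities from the preliminaries and the normality of $N$.
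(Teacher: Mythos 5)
Your proposal is correct and follows essentially the same route as the paper: the counit is obtained as $\epsilon_N\circ\rho$, and faithfulness of $h_G$ is deduced from the factorisation of $h_G$ through the conditional expectation onto the coset space, using faithfulness of the quotient Haar state together with faithfulness of that expectation (which the paper, working with $E_{N\backslash G}$ instead of $E_{G/N}$, justifies by exactly your two ingredients: injectivity of $(\rho\otimes\id)\Phi$ via the counit and faithfulness of the slice of $h_N$). Your write-up merely makes explicit the slice-map lemma and the identity $h_G=h_{G/N}\circ E_{G/N}$ that the paper leaves implicit.
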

\begin{proof}
Let $\rho:A\rightarrow B$ be the corresponding surjection. It follows from the uniqueness of the co-unit $\epsilon_G$ that $\epsilon_G=\epsilon_N\circ \rho$ and so, the co-unit is norm bounded on $\mathcal{A}$.\\
To show that the Haar measure $h_G$ is faithful, we first note that the conditional expectation onto $N\backslash G$ given by $$E_{N\backslash G}=(h_H\circ \rho\otimes \id)\Phi$$
is faithful since $(\rho\otimes \id)\Phi$ is injective as $$(\epsilon_N\circ \rho\otimes \id)\Phi(a)=(\epsilon_G\otimes \id)\Phi(a)=a$$
and $h_H$ is faithful. But then as $E_{N\backslash G}$ is invariant under $h_G$, it follows that $h_G$ is faithful. 
\end{proof}
\begin{pro}
  Let $G$ be a coamenable compact quantum group. Then, any subgroup $H$ of $G$ is coamenable as well. 
\end{pro}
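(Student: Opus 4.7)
My plan is to verify the two defining conditions for coamenability of $H=(B,\Psi)$: norm-boundedness of the counit $\epsilon_H$ on $\mathcal{B}$ and faithfulness of the Haar state $h_H$ on $B$.

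For the counit, the surjection $\rho:A\to B$ restricts to a surjective Hopf $*$-algebra morphism $\rho|_{\mathcal{A}}:\mathcal{A}\to\mathcal{B}$ between the dense Hopf $*$-algebras. By a direct calculation on matrix coefficients (equivalently, uniqueness of the counit), this yields $\epsilon_G=\epsilon_H\circ\rho$ on $\mathcal{A}$, just as in the proof of Proposition 2.7. Since $G$ is coamenable, $\epsilon_G$ extends to a continuous character on $A$. The goal is then to show that this extension annihilates the closed ideal $\ker\rho\subseteq A$: it will then descend to a continuous character on $B\cong A/\ker\rho$ that must equal (the extension of) $\epsilon_H$ by uniqueness.

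The main obstacle is bridging the immediate identity $\epsilon_G|_{\ker\rho\cap\mathcal{A}}=0$ to $\epsilon_G|_{\ker\rho}=0$; by continuity of $\epsilon_G$ on $A$ this reduces to showing $\overline{\ker\rho\cap\mathcal{A}}=\ker\rho$, or equivalently that the $C^*$-algebra $B$ coincides with the maximal $C^*$-completion $B_{\max}$ of $\mathcal{B}$. The easy inequality $\|b\|_{B_{\max}}\le\|a\|_A$ for every lift $a\in\mathcal{A}$ of $b\in\mathcal{B}$ follows because any representation of $\mathcal{B}$ composed with $\rho$ is a representation of $\mathcal{A}$, whose operator norm is dominated by $\|\cdot\|_{A_{\max}}=\|\cdot\|_A$ by coamenability of $G$. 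The harder inequality $\|a\|_A\le\|b\|_B+\varepsilon$ for a suitable $\mathcal{A}$-lift $a$ of $b$ I would establish by a block-wise lifting based on the Peter-Weyl decomposition of $\mathcal{A}$, using that each matrix coefficient block is finite-dimensional.

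For faithfulness of $h_H$ on $B$, once $B=B_{\max}$ is established I would use the universal chain $B_{\max}\twoheadrightarrow B\twoheadrightarrow B_{\redu}$ together with the standard faithfulness of $h_H$ on $B_{\redu}$ to reduce to $B_{\max}=B_{\redu}$; this in turn can be derived by transporting the faithfulness of $h_G$ on $A$ through the conditional expectation $E_{H\backslash G}=(h_H\circ\rho\otimes\id)\Phi$ that already featured in the proof of Proposition 2.7.
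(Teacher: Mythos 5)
Your proposal takes a genuinely different route from the paper, and it has essential gaps at precisely the points where the real work lies. The paper's proof is two lines of heavy machinery: by a theorem of Kyed, coamenability of a compact quantum group is equivalent to amenability of its fusion algebra in the sense of Hiai--Izumi, and Proposition 7.4(2) of Hiai--Izumi then transfers amenability to the fusion algebra of the subgroup $H$. Your reduction of the counit statement to the equality $B=B_{\max}$ (equivalently, to the density of $\ker\rho\cap\mathcal{A}$ in $\ker\rho$) is a correct reformulation, but it is essentially the theorem itself, and the proposed ``block-wise lifting based on the Peter--Weyl decomposition'' cannot deliver the inequality $\|a\|_A\le\|b\|_B+\varepsilon$: the $C^*$-norm of an element of $\mathcal{A}$ is not controlled by its individual Peter--Weyl blocks, and, more tellingly, your sketch of this step makes no use of the coamenability of $G$. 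If it worked, it would show that every quantum subgroup of a maximal compact quantum group is again maximal, which is false: the canonical surjection $C^*(F_2)\to C^*_r(F_2)$ intertwines the comultiplications, so the non-maximal $(C^*_r(F_2),\Delta_r)$ is a quantum subgroup (in the sense of Definition 2.4) of the maximal, non-coamenable $(C^*(F_2),\Delta)$. Coamenability must therefore enter in an essential way at exactly the step you leave unspecified.

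The second half has a similar problem. Since $B$ is a quotient of $A$ and not a subalgebra, there is no conditional expectation from $A$ onto $B$; the map $E_{H\backslash G}=(h_H\circ\rho\otimes\id)\Phi$ lands in $A_{H\backslash G}\subseteq A$, and faithfulness of $h_G$ on $A$ gives no direct handle on the state $h_H\circ\rho$, whose null space you would need to compare with $\ker\rho$. Note also that the argument of Proposition 2.7 you invoke runs in the opposite direction, from faithfulness of $h_N$ and of the quotient Haar state to faithfulness of $h_G$. As it stands, both halves of your proof reduce the statement to equivalent reformulations without supplying the mechanism that proves them.
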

\begin{proof} We use a little bit of machinery for this. It is shown in \cite{Kyed} that a compact quantum group is coamenable if and only if its fusion algebra is amenable in the sense of \cite{Hiai-Iz}. The proposition now follows from Proposition 7.4(2) of \cite{Hiai-Iz}. \end{proof}

We now turn to Tannaka-Krein Duality $^\ast$. In a seminal paper \cite{Woro-Invent}, Woronowicz proved a version of Tannaka-Krein Duality for compact quantum groups. For $G=(A,\Phi)$ a compact quantum group, let $\Rep(G)$ denote the representation category of $G$, a category whose objects are finite dimensional representations of $G$ and whose arrows are defined by: 
\[(u,v) :=\{T \in B(\mathcal{H}_u, \mathcal{H}_v) \mid (T \otimes 1) u = v ( T \otimes 1)\}\]
i.e. the intertwiner space of the two representations. This category is a tensor $c^\ast$-category. Woronowicz encoded this information in a triple  
\begin{equation}(R, \{\mathcal{H}_\alpha\}_{\alpha \in R}, \{\Mor(\alpha, \beta)\}_{\alpha,\beta \in R})\end{equation}
where $R$ is a monoid, given any $\alpha\in R$, $\mathcal{H}_\alpha$ is a finite dimensional Hilbert space and given $\alpha,\beta\in R$, $\Mor(\alpha, \beta) \subseteq B(H_\alpha, H_\beta)$ is a subspace. Given a compact quantum group $G$, such a triple is constructed by taking the set of all finite dimensional representations of $G$, which can be given a monoid structure by tensor product, with the trivial representation as the unit element, $\mathcal{H}_\alpha$ denoting the ambient hilbert space of the representation $\alpha$ and $\Mor(\alpha, \beta)$ being the intertwiner space of the representation $\alpha$ and $\beta$. We call such a triple the quantum group triple associated with $G$.

Woronowicz proved that any triple as in (2.1) is a quantum group triple for some suitable compact quantum group denoted by $R_{\max}$. This compact quantum group is maximal. Its canonical dense Hopf $\ast$-algebra will be denoted $\mathcal{A}_R$ 

\section{Automorphisms and Inner Automorphisms}
Let $G = (A, \Phi)$ be a compact quantum group. A quantum group automorphism (henceforth, the word automorphism will always mean a quantum group automorphism) is a $c^\ast$-algebraic automorphism $\alpha: A \to A$ such that $(\alpha \otimes \alpha)\Phi = \Phi \circ \alpha$. 

\begin{pro}
  Let $\alpha$ be an automorphism of $G = (A, \Phi)$. Then, 
  \begin{enumerate}[font = \upshape]
  \item $\alpha(\mathcal{A})=\mathcal{A}$
  \item $\epsilon_G \circ \alpha = \epsilon_G$. 
  \item $\kappa_G \circ \alpha = \alpha \circ \kappa_G$ 
  \item $h_G \circ \alpha = h_G$ 
  \item If $((u_{ij})) \in M_n(A)$ is a finite dimensional irreducible representation of $G$, then so is $((\alpha(u_{ij}))) \in M_n(A)$
  \end{enumerate}
\end{pro}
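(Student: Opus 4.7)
The plan is to establish (5) and (1) first by direct manipulation of the defining identity $(\alpha\otimes\alpha)\Phi = \Phi\circ\alpha$, and then to deduce (2), (3), (4) by producing ``twisted'' candidate maps and invoking the uniqueness of the counit, antipode and Haar state.

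For (5), I would apply the defining identity to each matrix entry of a representation $u=\sum e_{ij}\otimes u_{ij}$: this immediately gives $\Phi(\alpha(u_{ij}))=\sum_k \alpha(u_{ik})\otimes \alpha(u_{kj})$, so $(\id\otimes\alpha)(u)$ is again a representation, and it is unitary because $\alpha$ is a $*$-automorphism. Irreducibility transfers because the intertwiner space is preserved: any $T$ with $(T\otimes 1)u=u(T\otimes 1)$ satisfies $(T\otimes 1)(\id\otimes\alpha)(u)=(\id\otimes\alpha)(u)(T\otimes 1)$ upon applying $\id\otimes\alpha$, and the reverse containment follows by running the same argument through $\alpha^{-1}$. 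Part (1) is then immediate: $\mathcal{A}$ is spanned by matrix entries of irreducible representations, so (5) applied to $\alpha$ and to $\alpha^{-1}$ shows $\alpha(\mathcal{A})=\mathcal{A}$.

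For (4) I would set $\tilde{h}:=h_G\circ\alpha$, which is a state because $\alpha$ is a unital $*$-automorphism, and then exploit $\Phi\circ\alpha=(\alpha\otimes\alpha)\Phi$ together with left invariance of $h_G$ applied at $\alpha(a)$ to compute
\[
\alpha\bigl((\id\otimes\tilde{h})\Phi(a)\bigr)=(\id\otimes h_G)\Phi(\alpha(a))=h_G(\alpha(a))\cdot 1=\tilde{h}(a)\cdot 1,
\]
and then apply $\alpha^{-1}$; right invariance is symmetric, so uniqueness of the Haar state forces $\tilde{h}=h_G$. For (2) I would take $\tilde{\epsilon}:=\epsilon_G\circ\alpha$ on $\mathcal{A}$ (well-defined by (1)); the same manipulation shows $(\tilde{\epsilon}\otimes\id)\Phi=(\id\otimes\tilde{\epsilon})\Phi=\id$, and uniqueness of the counit gives $\tilde{\epsilon}=\epsilon_G$. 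For (3) I would set $\tilde{\kappa}:=\alpha^{-1}\circ\kappa_G\circ\alpha$, which is anti-multiplicative since $\alpha$ is multiplicative; applying the antipode axiom for $\kappa_G$ at $\alpha(a)$, then $\alpha^{-1}$ and (2), verifies that $\tilde{\kappa}$ also satisfies $m((\tilde{\kappa}\otimes\id)\Phi(a))=m((\id\otimes\tilde{\kappa})\Phi(a))=\epsilon_G(a)\cdot 1$, so uniqueness of the antipode yields $\tilde{\kappa}=\kappa_G$, which rearranges to (3).

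There is no serious obstacle here; the entire proof is a bookkeeping exercise that leverages the compatibility $(\alpha\otimes\alpha)\Phi=\Phi\alpha$ and the uniqueness theorems for the Peter--Weyl structure. The only subtlety is to confirm, before invoking uniqueness, that each twisted candidate is defined on the right domain and has the correct algebraic type (state, $*$-homomorphism, anti-homomorphism), which is inherited transparently from $\alpha$ and $\alpha^{-1}$.
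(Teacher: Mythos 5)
Your proof is correct, and for parts (1)--(4) it follows essentially the same route as the paper: apply $(\alpha\otimes\alpha)\Phi=\Phi\circ\alpha$ to build a twisted candidate and invoke the uniqueness of the counit, antipode and Haar state (the paper merely cites these uniqueness statements without writing out the computations you supply, and it places (1) first, which is harmless since your order (5), (1), (4), (2), (3) has no circularity). The one genuinely different step is part (5): the paper deduces irreducibility of $\alpha(u)$ from part (4) together with Woronowicz's character criterion, namely that $u$ is irreducible if and only if $h_G(\chi_u^\ast\chi_u)=1$, so that $h_G(\chi_{\alpha(u)}^\ast\chi_{\alpha(u)})=h_G(\alpha(\chi_u^\ast\chi_u))=1$. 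You instead observe that $\alpha$ preserves the self-intertwiner space, $(u,u)=(\alpha(u),\alpha(u))$, since $(\id\otimes\alpha)(T\otimes 1)=T\otimes 1$, and conclude by Schur's lemma; this is independent of (4) and arguably more structural, as it is exactly the observation the paper needs anyway in Proposition 3.2 when passing to the Tannaka--Krein picture (where $\Mor(a,b)=\Mor(\widehat{\alpha}(a),\widehat{\alpha}(b))$). The paper's route is shorter given that the character criterion from \cite{Woro-CMP87} is already on the table; yours avoids invoking Haar-state invariance for a statement that is purely about intertwiners. Both are complete.
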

\begin{proof}\mbox{}
  \begin{enumerate}
  \item Let $((u_{ij})) \in M_n(A)$ be a unitary representation of $G$. Then, it is easy to see that $((\alpha(u_{ij}))) \in M_n(A)$ is also a representation. It then follows that $\alpha(\mathcal(A)) \subseteq \mathcal(A)$. But, it is also easily checked that $\alpha^{-1}$ is an automorphism of $G$ and hence $\alpha(\mathcal{A}) = \mathcal{A}$. 
  \item Follows from the uniqueness of the counit $\epsilon_G$ under the condition that $(\epsilon_G \otimes \id) \Phi = (\id \otimes \epsilon_G) \Phi = \id$.  \item Follows from the uniqueness of the antipode under the condition that $m(\kappa_G \otimes \id) \Phi(\cdot) = m(\id \otimes \kappa_G) \Phi (\cdot) = \epsilon_G(\cdot) 1$. 
  \item Follows from the uniqueness of Haar measure $h_G$ under the condition that $(h_G \otimes \id)\Phi(a) = (\id \otimes h_G)\Phi(a) = h_G(a) 1$
  \item Let $u = ((u_{ij})) \in M_n(A)$ be an finite dimensional representation. Then, $\chi_u = \sum u_{ii}$ is the character of the representation $u$. It is shown in \cite{Woro-CMP87} that $u$ is irreducible if and only if $h_G(\chi_u^\ast \chi_u) = 1$. The result now follows immediately from part (4).    
  \end{enumerate}
\end{proof}
Given a compact quantum group $G = (A, \Phi)$, consider the associated triple, $$(R_G, \{\mathcal{H}_a\}_{a \in R_G}, \{\Mor(a, b)\}_{a,b\in R_G})$$ Any automorphism $\alpha: A \to A$ of $G$, gives a monoidal automorphism, $\widehat{\alpha}: R \to R$ such that 
\begin{enumerate}
\item $\mathcal{H}_a = \mathcal{H}_{\widehat{\alpha}(a)},\;\forall\; a \in R_G$ 
\item $\Mor(a, b) = \Mor ( \widehat{\alpha}(a), \widehat{\alpha}(b))\;\forall\; a,b\in R_G$
\end{enumerate}
In fact, the converse is also true, given a triple $$(R, \{\mathcal{H}_{a}\}_{a \in R}, \{\Mor(a,b)\}_{a,b\in R})$$ with a monoidal automorphism $\widehat{\alpha}: R \to R$ such that $\mathcal{H}_{\widehat{\alpha}(a)}=\mathcal{H}_a$ for all $a \in R$ and $\Mor(\widehat{\alpha}(a), \widehat{\alpha}(b))=\Mor(a,b)$, we can construct an automorphism $\alpha: R_{\max} \to R_{\max}$. 
\begin{pro}
Let $(R, \{\mathcal{H}_a\}_{a \in R}, \{\Mor(a,b)\}_{a,b\in R})$ be a quantum group triple as above. Then, a monoidal automorphism $\widehat{\alpha}: R \to R$ induces an automorphism $\alpha: R_{\max} \to R_{\max}$ if and only if 
\begin{enumerate}
\item $\mathcal{H}_a = \mathcal{H}_{\widehat{\alpha}(a)}\; \forall\; a \in R$ 
\item $\Mor(\widehat{\alpha}(a), \widehat{\alpha}(b)) = \Mor(a,b)\; \forall\; a, b \in R$. 
\end{enumerate}
\end{pro}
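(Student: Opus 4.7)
The forward direction has essentially already been verified in the paragraph preceding the proposition: given an automorphism $\alpha$ of $R_{\max}$, Proposition 3.1(5) shows it permutes irreducible representations while preserving the ambient Hilbert space, and the intertwiner identity is immediate from $(\alpha \otimes \alpha)\Phi = \Phi \circ \alpha$; monoidality of $\widehat{\alpha}$ follows because $\alpha \otimes \alpha$ sends the representation $a \otimes b$ to $\widehat{\alpha}(a) \otimes \widehat{\alpha}(b)$. So the substance of the proposition is the converse.

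For the converse, my plan is to use Woronowicz's Tannaka-Krein reconstruction, recalled at the end of Section 2. Conditions (1) and (2) let me package $\widehat{\alpha}$ into a strict unitary tensor $*$-endofunctor $F$ of the representation category: on objects $F(a) = \widehat{\alpha}(a)$, on Hilbert spaces the identity $\mathcal{H}_a = \mathcal{H}_{\widehat{\alpha}(a)}$ provided by (1), and on morphisms the identity $B(\mathcal{H}_a, \mathcal{H}_b) \to B(\mathcal{H}_{\widehat{\alpha}(a)}, \mathcal{H}_{\widehat{\alpha}(b)})$, which by (2) restricts to a bijection $\Mor(a,b) \to \Mor(\widehat{\alpha}(a), \widehat{\alpha}(b))$; monoidality of $\widehat{\alpha}$ supplies the tensor structure of $F$, and $\widehat{\alpha^{-1}}$ furnishes an inverse functor. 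Tannaka-Krein then produces a compact quantum group morphism $\alpha$ of $R_{\max}$ realising $F$, which is an automorphism because $F$ is an isomorphism of tensor $c^\ast$-categories. Equivalently, working directly on the dense Hopf $*$-algebra $\mathcal{A}_R$, one fixes an orthonormal basis of each $\mathcal{H}_a = \mathcal{H}_{\widehat{\alpha}(a)}$ and sets $\alpha(u^a_{ij}) = u^{\widehat{\alpha}(a)}_{ij}$ on matrix coefficients; multiplicativity follows from monoidality of $\widehat{\alpha}$ together with the factorisation of the matrix coefficients of $a \otimes b$ as products of those of $a$ and $b$; compatibility with $\Phi$ is immediate from $\Phi(u^a_{ij}) = \sum_k u^a_{ik} \otimes u^a_{kj}$; and the $*$-structure transports because the conjugate representation is characterised by a distinguished intertwiner preserved by (2). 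Extension of $\alpha$ from $\mathcal{A}_R$ to $R_{\max}$ is then automatic by the universal property defining $R_{\max}$.

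The main obstacle I anticipate is the well-definedness of this prescription on $\mathcal{A}_R$: a single element of the Hopf $*$-algebra can arise as a matrix coefficient of several different representations, related through inclusions of subrepresentations by intertwiners, and one has to verify that $\alpha(u^a_{ij}) = u^{\widehat{\alpha}(a)}_{ij}$ is independent of this choice. This is precisely what condition (2) delivers: the intertwiners realising these identifications for $a,b$ are the same as those for $\widehat{\alpha}(a), \widehat{\alpha}(b)$ under the equality $\mathcal{H}_a = \mathcal{H}_{\widehat{\alpha}(a)}$, so the two candidate values coincide. Once this and the compatibility with the $*$-operation are settled, the remaining verifications are routine, and the universal construction of $R_{\max}$ from $\mathcal{A}_R$ upgrades $\alpha$ to a $c^\ast$-algebraic automorphism.
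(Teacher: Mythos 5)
Your proposal is correct and follows essentially the same route as the paper: the paper's proof of the converse observes that conditions (1) and (2) make $\{R_{\max}, u_{\widehat{\alpha}(a)}\}$ a model for the triple $R$, invokes Proposition 3.3 of Woronowicz's Tannaka--Krein paper to get the $\ast$-homomorphism $\alpha(u_a) = u_{\widehat{\alpha}(a)}$ on $\mathcal{A}_R$ (this is exactly the well-definedness point you isolate), extends by the maximality of $R_{\max}$, and obtains invertibility by running the same argument for $\widehat{\alpha}^{-1}$. Your discussion of well-definedness and of the functorial packaging is just a more explicit unpacking of what the citation to Woronowicz delivers.
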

\begin{proof}
  ($\Rightarrow$): Easy to see. \\
  ($\Leftarrow$): We refer to \cite{Woro-Invent} for definition of unexplained terms. It is easy to see that $\{R_{\max}, u_{\widehat{\alpha}(a)}\}$ gives a model for $R$. It now follows from Proposition 3.3 of \cite{Woro-Invent} that there is a map $\alpha: \mathcal{A}_R \to R_{max}$ such that $$\alpha(u_a) = u_{\widehat{\alpha}(a)}$$ Since $\mathcal{A}_R$ is dense in $R_{max}$, it can be extended to the whole of $R_{\max}$, since $R_{\max}$ is obtained by a maximal norm construction (see for example the discussion at the end of Section 3 of \cite{Woro-Invent}). It is an automorphism as $\widehat{\alpha}$ is an automorphism, so the same can be done for $\widehat{\alpha}{^{-1}}$ and since $\{u_{a}\}_{a\in R}$ generate $\mathcal{A}_R$, it is easy to check that $(\alpha\otimes\alpha)\Phi=\Phi\circ \alpha$
\end{proof}
Some authors deal with automorphisms or more generally homomorphisms at a purely algebraic level, since given an automorphism $\alpha: A  \to A$ of a compact quantum group $G = (A, \Phi)$, it gives an automorphism, by restriction, $\alpha: \mathcal{A} \to \mathcal{A}$, of the Hopf $\ast$-algebra $\mathcal{A}$. Now let $\mathcal{A}$ be the canonical dense Hopf $\ast$-algebra of some compact quantum group $G$ (such Hopf $\ast$-algebras are called CQG algebras \cite{Dij-K}). Suppose $\underline{\alpha}$ is an $\ast$-automorphism of $\mathcal{A}$ which commutes with the co-product. Then $\underline{\alpha}$ can be extended, by universality, to give an automorphism $\alpha$ of the compact quantum group $G_{\max}$.

But an automorphism of a compact quantum group is also Haar-measure preserving, so in particular, $\alpha$ being an automorphism of $G_{max}$, will be  implemented by some unitary $$u_\alpha: L^2(A_{\max}, h_{G_{\max}}) \to L^2(A_{\max}, h_{G_{\max}})$$ and so, with $A_{\redu} \subseteq \mathcal{B}(L^2(A_{\max}, h_{G_{\max}})$ we have, 
\begin{align*} 
\alpha_{\redu}: A_{\redu} &\to A_{\redu} \\
\alpha_{\redu}(a) &= u_\alpha a u_\alpha^{\ast}
\end{align*}
 and 
$$\alpha_{\redu|{\mathcal{A}}}=\underline{\alpha}$$. 

So, given an automorphism of a CQG Algebra $\mathcal{A}$, we can extend it to get automorphism of both $G_{\max}$ and $G_{\redu}$, the associated maximal and reduced compact quantum groups. Since these are the two main types of compact quantum groups that we will be considering, and also, since $c^\ast$-algebraic automorphisms are interesting in their own right, our automorphisms will always be $c^\ast$-algebraic. 

In the classical case, our definition of an automorphism reduces to continuous group automorphisms. In the cocommutative case, with the underlying discrete group being $\Gamma$, any automorphism gives us a group automorphism of $\Gamma$ and of course, the converse is easily seen, i.e., any group automorphism of $\Gamma$ extends to an automorphism of both $C^\ast(\Gamma)$ and $C^\ast_{\redu}(\Gamma)$.

One interesting consequence of this follows from a lemma of Wang, which we recall: 
\begin{lem}
 [\cite{Wang-JFA}]
Let $A, A_1, A_2$ be $c^\ast$-algebras, $\pi_1: A \to A_1$ and $\pi_2: A \to A_2$ be surjections with kernels $I_1$ and $I_2$ respectively and $P_1$ and $P_2$ be the pure state space of $A_1$ and $A_2$ respectively. Then, the following are equivalent: 
\begin{enumerate}[font = \upshape]
\item $\{\phi_1\circ\pi_1 : \phi_1\in P_1\}=\{\phi_2\circ\pi_2:\phi_2\in P_2\}$ as subsets of pure states of $A$.
\item $I_1=I_2$
\item There is an isomorphism $\alpha:A_1\rightarrow A_2$ such that $\pi_2=\alpha\circ \pi_1$.
\end{enumerate}
\end{lem}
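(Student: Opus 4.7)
The plan is to prove the cycle $(3) \Rightarrow (2) \Rightarrow (1) \Rightarrow (2) \Rightarrow (3)$, where the two hard implications are $(1) \Rightarrow (2)$ and $(2) \Rightarrow (3)$. The implication $(3) \Rightarrow (2)$ is immediate from $\pi_2 = \alpha \circ \pi_1$ with $\alpha$ an isomorphism, so $\ker \pi_2 = \ker \pi_1$.

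For $(2) \Rightarrow (3)$, I would invoke the $c^\ast$-algebraic first isomorphism theorem. Since $\pi_1$ and $\pi_2$ are surjective $\ast$-homomorphisms of $c^\ast$-algebras with common kernel $I := I_1 = I_2$, the induced maps $\widetilde{\pi_i}: A/I \to A_i$ are isometric $\ast$-isomorphisms (closed ideals in $c^\ast$-algebras give Hausdorff quotients, and a surjective $\ast$-homomorphism between $c^\ast$-algebras is automatically a quotient map). Defining $\alpha := \widetilde{\pi_2} \circ \widetilde{\pi_1}^{-1}$ yields the required isomorphism with $\pi_2 = \alpha \circ \pi_1$. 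For $(2) \Rightarrow (1)$, given the isomorphism $\alpha$ produced in the previous step, pull-back by $\alpha$ is an affine homeomorphism of state spaces sending $P_2$ to $P_1$, which combined with $\pi_2 = \alpha \circ \pi_1$ gives equality of the two sets.

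The main content is $(1) \Rightarrow (2)$. The key observation is that for any surjection $\pi: A \to B$ of $c^\ast$-algebras with kernel $I$, pull-back $\phi \mapsto \phi \circ \pi$ is an affine bijection between the state space of $B$ and the states of $A$ that annihilate $I$; since it preserves extreme points, it restricts to a bijection between pure states of $B$ and the set $P_I := \{\phi \in P(A) : \phi(I) = 0\}$. So hypothesis (1) translates directly to $P_{I_1} = P_{I_2}$. It then suffices to show that a closed two-sided ideal $I \trianglelefteq A$ is recovered from $P_I$, for instance as
\[ I = \bigcap_{\phi \in P_I} \{ a \in A : \phi(a^\ast a) = 0\}. \]
This follows because for any pure state $\phi$ of $A/I$, the composition $\phi \circ \pi$ lies in $P_I$ and vanishes on $I$; conversely, if $a \notin I$ then $\pi(a) \neq 0$ in $A/I$, so some pure state $\psi$ of $A/I$ satisfies $\psi(\pi(a)^\ast \pi(a)) \neq 0$ (pure states of a non-zero $c^\ast$-algebra separate points via the GNS construction producing irreducible representations). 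Hence $\phi := \psi \circ \pi \in P_I$ witnesses $a$ not being in the intersection.

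The main obstacle is assembling this $P_I$-to-$I$ recovery cleanly; the subtlety is not in any one of the steps individually, but in correctly invoking that pure states of a $c^\ast$-algebra separate the points (equivalently, the sum of GNS representations of pure states is faithful), which is what forces $I_1 = I_2$ once $P_{I_1} = P_{I_2}$. Once this is established, the implications $(2) \Rightarrow (3)$ and $(2) \Rightarrow (1)$ complete the cycle.
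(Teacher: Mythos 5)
The paper does not actually prove this lemma --- it is imported verbatim from Wang \cite{Wang-JFA} and used as a black box --- so there is no in-paper argument to compare against. Your proof is correct and is the standard one: $(3)\Rightarrow(2)$ is immediate, $(2)\Rightarrow(3)$ is the first isomorphism theorem for $c^\ast$-algebras, $(3)\Rightarrow(1)$ follows by pulling back pure states along $\alpha$, and for $(1)\Rightarrow(2)$ your two key points both hold: the states of $A$ annihilating a closed two-sided ideal $I$ form a face of the state space (so the affine bijection with states of $A/I$ matches extreme points with extreme points, identifying hypothesis (1) with $P_{I_1}=P_{I_2}$), and the ideal is recovered as $I=\bigcap_{\phi\in P_I}\{a\in A:\phi(a^\ast a)=0\}$, the nontrivial inclusion coming from the fact that a nonzero positive element of $A/I$ is not annihilated by some pure state (equivalently, the atomic representation of $A/I$ is faithful). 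The only cosmetic caveat is that the extreme-point and norm-attainment arguments are cleanest for unital algebras, which suffices for every use of the lemma in this paper.
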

Now let $\mathcal{A}$ be a CQG algebra, with $G_{\max}$ and $G_{\redu}$ the corresponding maximal and reduced compact quantum groups. In the set up of the previous lemma, let $A = A_{\max}$ and $A_1 = A_{\redu}=A_2$. If $\alpha$ is an automorphism of $G_{\max}$, then we have the following diagram: 
\begin{center}
$\xymatrix{
A_{\max} \ar[d]_{\psi} \ar[r]^{\alpha} & A_{\max} \ar[d]^{\psi} \\
A_{\redu} \ar[r]_{\alpha_{\redu}} & A_{\redu} }$
\end{center}
Here $\psi$ denotes the canonical surjection from $A_{\\max}$ onto $A_{\redu}$. Since $\left.\alpha\right|_{\mathcal{A}}=\left.\alpha_{\redu}\right|_{\mathcal{A}}$, this diagram commutes, so $\ker(\psi)=\ker(\psi \circ \alpha)$ and hence, $\alpha(I_\psi)=I_\psi$, where $I_\psi$ denotes $\ker(\psi)$. So, every such automorphism keeps the ideal $I_\psi$ stable. 

In the classical case, a special class of automorphisms are the inner automorphisms, the automorphisms of the form $\alpha_s: G \to G$ where
\[\alpha_s(g) = sgs^{-1},\;\; s,g \in G\]
Let's note that the inner automorphisms preserve the class of any unitary representation of the compact group. In other words, the fixed point algebra of $C(G)$ under the action of $G$ on it by inner automorphisms contains the characters of all irreducible representations of the group, and in fact, it follows from Peter-Weyl theorem that the linear span of characters of all irreducible representations is dense in this algebra. 

Let's consider the more general class of automorphisms that preserve the representation class of each irreducible representation. Denoting this subset of the automorphism group by $\Aut_\chi(G)$, it is easily seen that this gives a normal subgroup of the group $\Aut(G)$. It is known for compact connected groups that $\Aut_\chi(G)=\Inn(G)$, where $\Inn(G)$ denotes the inner automorphisms of $G$ \cite{Mcmullen}. But there are several examples of finite groups for which $\Inn(G)$ is a proper subgroup of $\Aut_\chi(G)$ (for more on this we refer to the survey \cite{Yadav} and to references therein). 

Given a compact quantum group $G = (A, \Phi)$, let $G_{\kar}$ denote the set of characters of irreducible representations of $G$. We want to consider the group of automorphisms 
\[\Aut_\chi(G) = \{\alpha \in \Aut(G) \mid \alpha(\chi_{u^a}) = \chi_{u^a} \; \forall \; \chi_{
u^a} \in G_{\kar} \} \]

It is straightforward to see, using Proposition 3.1(5), that $\Aut_{\chi}(G)$ is a normal subgroup of $\Aut(G)$.

We topologise $\Aut(G)$ by taking as neighbourhood of identity, sets of the form:
\[u(a_1, \dots, a_n \in A \mid \epsilon > 0) := \{\alpha \in \Aut(G) \mid \|a_i - \alpha(a_i) \| < \epsilon \;\forall i \in \{1,2,\dots, n\}\}\]
$\Aut_\chi(G)$ is easily seen to be a closed normal subgroup. 
For the next theorem, we assume that $G=(A,\Phi,u)$ is compact matrix quantum group.
\begin{thm}
 $\Aut_\chi(G)$ is a compact group. 
\end{thm}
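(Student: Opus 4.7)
The plan is to show every net in $\Aut_\chi(G)$ has a convergent subnet, exploiting the rigidity imposed by character preservation. The key observation is a \emph{finite-dimensional confinement property}: for $\alpha \in \Aut_\chi(G)$ and $u^c$ any irreducible representation of $G$, $(\id \otimes \alpha)(u^c)$ is again irreducible (by Proposition 3.1(5)) with character $\alpha(\chi_{u^c}) = \chi_{u^c}$; since irreducible CQG representations with equal characters are equivalent, $(\id \otimes \alpha)(u^c)$ is equivalent to $u^c$ via some unitary $W_\alpha^c \in B(\mathcal{H}_c)$, so
\[
  \alpha(u^c_{jk}) \in \mathrm{span}\{u^c_{lm} : 1 \le l, m \le \dim u^c\}.
\]
This span is finite-dimensional, and since $\alpha$ is a $c^\ast$-automorphism hence isometric and $u^c$ is unitary, the orbit $\{\alpha(u^c_{jk}) : \alpha \in \Aut_\chi(G)\}$ lies in a norm-ball of this finite-dimensional space and is therefore relatively compact in $A$. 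Because $G$ is a compact matrix quantum group, every irreducible is a subrepresentation of some tensor word in $u$ and $\bar u$, so the set of equivalence classes of irreducibles is countable.

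Given a net $\{\alpha_i\}_{i \in I}$ in $\Aut_\chi(G)$, Tychonoff (or a diagonal argument exploiting countability) yields a subnet along which $\alpha_i(u^c_{jl})$ and $\alpha_i^{-1}(u^c_{jl})$ both converge in norm for every irreducible $c$ and indices $j, l$. Define $\alpha, \beta : \mathcal{A} \to \mathcal{A}$ linearly by these limits. Norm continuity of multiplication and involution, combined with the uniform isometry of each $\alpha_i$, force $\alpha$ and $\beta$ to be isometric $*$-homomorphisms on $\mathcal{A}$, and they extend uniquely to isometric $*$-endomorphisms of $A$; pointwise convergence on the dense subalgebra $\mathcal{A}$ together with the uniform bound $\|\alpha_i\| = 1$ then promotes to pointwise convergence on all of $A$. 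The relations $(\alpha \otimes \alpha)\Phi = \Phi \circ \alpha$ and $\alpha(\chi_{u^c}) = \chi_{u^c}$ pass to the limit on matrix coefficients by continuity of $\Phi$ and the finiteness of the sums involved.

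The step I expect to be the main obstacle, and the reason for dragging $\alpha_i^{-1}$ through the subnet extraction, is verifying bijectivity of $\alpha$. This is resolved by the identity
\[
  \|\alpha_i(\beta(a)) - a\| = \|\alpha_i\bigl(\beta(a) - \alpha_i^{-1}(a)\bigr)\| = \|\beta(a) - \alpha_i^{-1}(a)\| \longrightarrow 0,
\]
which, combined with the pointwise convergence $\alpha_i(\beta(a)) \to \alpha(\beta(a))$, yields $\alpha \circ \beta = \id$; the symmetric argument gives $\beta \circ \alpha = \id$. Hence $\alpha \in \Aut_\chi(G)$ and $\alpha_i \to \alpha$ in the prescribed topology; since $\Aut(G)$ is Hausdorff, this proves $\Aut_\chi(G)$ is compact.
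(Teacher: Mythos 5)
Your argument is correct, but it takes a genuinely different route from the paper. The paper exploits the matrix hypothesis head-on: since $\alpha\in\Aut_\chi(G)$ fixes the character of the fundamental representation $u=((u_{ij}))$, it sends $u$ to $(u_\alpha\otimes 1)u(u_\alpha^\ast\otimes 1)$ for a scalar unitary $u_\alpha$, giving an injective anti-homomorphism $\gamma:\Aut_\chi(G)\to U(n)$; continuity of $\gamma^{-1}$ follows because the $u_{ij}$ generate $A$ as a $c^\ast$-algebra, and closedness of the image is proved by taking a point-norm limit of the maps $x\mapsto (t_i\otimes 1)x(t_i^\ast\otimes 1)$ on the operator system spanned by the $u_{ij}$ and invoking a matrix version of Pisier's lemma (a u.c.p.\ map carrying a generating unitary to a unitary is a $\ast$-homomorphism) to upgrade the limit u.c.p.\ map to an automorphism. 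You instead prove compactness directly: character preservation confines each orbit $\{\alpha(u^c_{jk})\}$ to the unit ball of the finite-dimensional coefficient space of $u^c$, Tychonoff extracts a subnet converging on all matrix coefficients of $\alpha_i$ and of $\alpha_i^{-1}$, and the limit is assembled by hand into an element of $\Aut_\chi(G)$, with invertibility secured by the isometry trick $\|\alpha_i(\beta(a))-a\|=\|\beta(a)-\alpha_i^{-1}(a)\|\to 0$. What the paper's route buys is the concrete realization of $\Aut_\chi(G)$ as a closed subgroup of $U(n)$ (hence a compact Lie group), which is stronger than bare compactness; what your route buys is the avoidance of the operator-system and multiplicative-domain machinery, and in fact greater generality: you use the matrix hypothesis only to get countability of $\widehat{G}$, which your own appeal to Tychonoff over arbitrary products renders unnecessary, so your proof establishes compactness of $\Aut_\chi(G)$ for an arbitrary compact quantum group. (Two cosmetic points: the Hausdorffness of $\Aut(G)$ is not needed, since compactness is equivalent to every net having a convergent subnet in any topological space; and well-definedness of the linear extension of $\alpha$ to $\mathcal{A}$ rests on the linear independence of matrix coefficients of inequivalent irreducibles, which you should cite explicitly.)
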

\begin{proof}
  Viewing $u$ as an element of $M_n(A) = M_n(\mathbb{C}) \otimes A$, let $u = ((u_{ij}))$. Let $\alpha \in \Aut_\chi(G)$, then we have: 
\[\sum u_{ii} = \sum \alpha(u_{ii})\]
so that there exists a $u_\alpha \in U(n)$, the group of $n\times n$ unitary (scalar) matrices. such that:
\[(u_\alpha \otimes 1) u (u_\alpha^\ast \otimes 1) = \alpha(u)\]
so, this gives us an anti-homomorphism, \[\gamma: \Aut_\chi(G) \to U(n)\]\[\alpha \mapsto u_\alpha\] where $\alpha(u) = (u_\alpha \otimes 1) u (u_\alpha^\ast \otimes 1)$. Clearly $\gamma$ is injective. 

To show $\Aut_\chi(G)$ is compact, we want to show that the image of $\gamma$ in $U(n)$, denoted by $\Im(\gamma)$, is closed in $U(n)$ and the map $\gamma^{-1}: \Im(\gamma) \to \Aut_\chi(G)$ is continuous.    
We have a lemma: 
\begin{lem}
  Let $\{\alpha_i\}_{i\in I}$ be a net of automorphisms of a $c^\ast$-algebra $A$ which is generated as a $c^\ast$-algebra by the set $\{s_1,..., s_n\}_{n\in \mathbb{N}}$. If $\alpha_i(s_k) \to s_k$ for all $k \in \{1, \dots, n\}$ in norm, then, $\alpha_i(a) \to a$ for all $a \in A$ in norm.  
\end{lem}
\begin{proof}
  This is straightforward.
\end{proof}
It now follows from the previous lemma that $\gamma^{-1}$ is continuous since $A$ is generated by $u_{ij}$'s, the matrix entries of the representation $u = ((u_{ij}))$. 

To show that $\Im(\gamma)$ is closed, we need the following lemma, a matrix version of a lemma of Pisier \cite{Pisier}, which is proved in \cite{Ar-P}. We sketch the proof here for the convenience of the reader. 
\begin{lem}
  Let $A$ be a unital $c^\ast$-algebra and $u = ((u_{ij})) \in M_n(A)$ be an unitary element, such that $A$ is generated as a $c^\ast$-algebra by $u_{ij}, i,j\in \{1,2,...,n\}$ and suppose $T: A \to B$ is a unital completely positive map into some $C^\ast$-algebra $B$ such that $((T(u_{ij}))) \in M_n(B)$ is also a unitary element. Then, $T$ is a $\ast$-homomorphism.  
\end{lem}
\begin{proof}
  The proof is by a multiplicative domain argument. We want to show that $u_{ij}$'s are in the multiplication of domain of $T$ from which the result will follow.

Let us take $u_{11} \in A$. Then, we know that 
\[\sum u_{1j} u_{1j}^\ast = 1.\] 
Now, by Cauchy-Schwarz inequality, 
\[T(u_{11} u_{11}^\ast) \geqslant T(u_{11}) T(u_{11})^\ast\]  
But, we also have, 
\begin{align*}
  T(u_{1j}u_{1j}^\ast) &\geqslant T(u_{1j})T(u_{1j})^\ast \\
 \Rightarrow \sum_{j=2}^n T(u_{1j}u_{1j}^\ast) &\geqslant \sum_{j=2}^n T(u_{1j})T(u_{1j})^\ast \\
 \Rightarrow T(1 - u_{11}u_{11}^\ast) &\geqslant 1 - T(u_{11})T(u_{11}^\ast) \\
 \Rightarrow T(u_{11} u_{11}^\ast) &= T(u_{11}) T(u_{11})^\ast
\end{align*}
Similarly, this can be proved for all $u_{ij}$'s. 
\end{proof}

We continue with the proof of the theorem. Let $\{t_i\}_{i\in I}$ be a net of unitary matrices in $U(n)$ such that $t_i \to t$ in $U(n)$, with $\gamma (\alpha_i)=t_i\: \forall i\in I$. Consider the finite dimensional operator system $S$ generated by $\{u_{ij}\}, i,j\in\{1,2,...,n\}$, the matrix entries of $u \in M_n(A)$. Since $\alpha_i$ are automorphisms such that:
\[\alpha_i(u) = (t_i \otimes 1) u (t_i^\ast \otimes 1)\]
the map on $S$ defined by $$1 \mapsto 1$$ and $$u \mapsto (t_i\otimes 1) u (t_i^\ast\otimes 1)$$ gives us an unital completely positive map $\phi_i: S \to S$ for all $i\in I$. Now, consider the map $\phi: S \to S$ given by $$1 \mapsto 1$$  $$u \mapsto (t \otimes 1) u (t \otimes 1)^\ast$$ Since $t_i \to t$ in norm, it follows that $\phi_i \to \phi$ in point norm topology. This implies $\|\phi\|_{cb} = 1$ and since $\phi$ is unital, we have that $\phi$ is unital completely positive. 

But by previous lemma, $\phi$ extends uniquely to an automorphism of the quantum group and so range of $\gamma$ is closed and we are done.     
\end{proof}

We now revert back to our original assumption of $G$ being a compact quantum group. 

\begin{thm}
  The group $\mathrm{Out}_{\chi} (G)=\Aut(G)/ \Aut_\chi(G)$ is totally disconnected. 
\end{thm}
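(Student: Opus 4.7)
The plan is to produce, for any two distinct classes in $\mathrm{Out}_\chi(G)$, a clopen subset of the quotient that separates them. This suffices: a connected component containing two distinct points would be split by such a clopen set into two nonempty relatively open pieces, contradicting its connectedness. The key analytic input is a uniform lower bound on the norm-distance between distinct characters of irreducible representations.

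The first step is the observation that for inequivalent irreducibles $u^a, u^b$, Peter--Weyl orthogonality gives $h_G((\chi_{u^a}-\chi_{u^b})^\ast(\chi_{u^a}-\chi_{u^b}))=2$, and since $h_G$ is a state on $A$ we have $h_G(x^\ast x)\leq \|x\|^2$ for every $x\in A$; hence $\|\chi_{u^a}-\chi_{u^b}\|\geq \sqrt{2}$ whenever $u^a\not\sim u^b$. Combined with Proposition 3.1(5), which ensures $\alpha(\chi)\in G_{\kar}$ for every $\alpha\in\Aut(G)$ and $\chi\in G_{\kar}$, this implies that for each $\chi\in G_{\kar}$ the pointwise stabiliser $S_\chi := \{\alpha\in\Aut(G):\alpha(\chi)=\chi\}$ is a clopen subgroup of $\Aut(G)$: closed as the preimage of $\{\chi\}$ under the continuous map $\alpha\mapsto\alpha(\chi)$, and open because the neighbourhood $\{\alpha:\|\alpha(\chi)-\chi\|<\sqrt{2}\}$ of any element of $S_\chi$ is already contained in $S_\chi$.

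Now take $[\alpha]\neq [\beta]$ in $\mathrm{Out}_\chi(G)$. By definition of $\Aut_\chi(G)$, there exists some $\chi_0\in G_{\kar}$ with $\alpha(\chi_0)\neq\beta(\chi_0)$. The left coset $\alpha S_{\chi_0}$ is clopen (since left translation is a homeomorphism of $\Aut(G)$), contains $\alpha$, and misses $\beta$. As $\Aut_\chi(G)\subseteq S_{\chi_0}$, this coset is saturated under the quotient map, so its image in $\mathrm{Out}_\chi(G)$ is clopen and separates $[\alpha]$ from $[\beta]$.

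The main work is the $\sqrt{2}$-separation step, which is itself short; everything else is routine bookkeeping with the quotient topology on a topological group. Equivalently, one could phrase the same argument by showing that $\Aut_\chi(G)$ contains the connected component of the identity in $\Aut(G)$: any connected subset of $\Aut(G)$ maps under each continuous evaluation $\alpha\mapsto\alpha(\chi)$ into the uniformly discrete set $G_{\kar}$ and must therefore be constant.
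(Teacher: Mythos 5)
Your proposal is correct and follows essentially the same route as the paper: both arguments show that the stabiliser $K_{[u]}=\{\alpha:\alpha(\chi_u)=\chi_u\}$ of each irreducible character is an open subgroup of $\Aut(G)$ containing $\Aut_\chi(G)$, using Peter--Weyl orthogonality of characters to get a uniform separation (the paper gets the bound $1$ from $|h(\chi_u^\ast\chi_u)-h(\alpha(\chi_u)^\ast\chi_u)|\leq\|\chi_u-\alpha(\chi_u)\|$, you get $\sqrt{2}$ directly from $h((\chi_{u^a}-\chi_{u^b})^\ast(\chi_{u^a}-\chi_{u^b}))=2$), and then deduces total disconnectedness of the quotient; your final coset-separation step is just a slightly more explicit version of the paper's observation that each $\Aut(G)/K_{[u]}$ is discrete.
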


\begin{proof}
  Let $u$ be an irreducible representation of $G$ and $\chi_u$ be its character, $\chi_u \in A$. Let 
\[K_{[u]} := \{\alpha \in \Aut(G) \mid \alpha(\chi_u) = \chi_u\}\] 
Then we have, 
\[\Aut_\chi(G) = \bigcap_{[u] \in \widehat{G}} K_{[u]}\]
where $\widehat{G}$ denotes the set of equivalence classes of irreducible representations of $G$ and $[u]$ the equivalence class corresponding to the irreducible representation $u$ of $G$. We shall show that $K_{[u]}$ is an open subgroup of $\Aut(G)$ which will imply that $\Aut(G)/ K_{[u]}$ is discrete and so it will follow that $ \Aut(G)/\Aut_\chi(G)$ is totally disconnected. 

To show that $K_{[u]}$ is open, consider the open neighbourhood: 
\[u(\chi_u, 1) := \{\alpha \in \Aut(G) \mid  \| \chi_u - \alpha(\chi_u)\| < 1\}\]
Now, $h(\chi_u^\ast \chi_u) = 1$ and
\[h(\alpha(\chi_u)^\ast\chi_u) =  
\begin{cases}
  1, &\text{if} \;\chi_u=\alpha(\chi_u)\\
  0, &\text{otherwise}
\end{cases}
\]
So, for any $\alpha \in u(\chi_u, 1)$, we have, 
\begin{align*}
  |h(\chi_u^\ast \chi_u) - h(\alpha(\chi_u)^\ast \chi_u)| \leqslant \| \chi_u - \alpha(\chi_u)\| < 1
\end{align*}
 and hence 
\begin{align*}
  |1 - h(\alpha(\chi_u)^\ast \chi_u)| < 1 \Rightarrow \alpha(\chi_u) = \chi_u \Rightarrow \alpha \in K_{[u]}  
\end{align*}

Since $u(\chi_u, 1)$ is an open neighbourhood in $K_{[u]}$ we indeed have that $K_{[u]}$ is an open subgroup of $\Aut(G)$.  
\end{proof}

In fact, much more is true for compact quantum groups which have fusion rules identical to those of connected compact simple Lie Groups. 

\begin{pro}Let $G=(A,\Phi)$ be a compact quantum group having fusion rules identical to those of a connected compact Lie group. Then the group $\mathrm{Out}_{\chi}(G)=\Aut(G)/\Aut_{\chi}(G)$ has finite order. In particular, if $G$ is a $q-$deformation of some simply connected simple compact Lie group, then $\mathrm{Out}_\chi(G)$ has order $1,2,3$ or $6$.   \end{pro}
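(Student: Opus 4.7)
The plan is to exhibit $\mathrm{Out}_\chi(G)$ as a subgroup of the automorphism group of the based fusion ring of $G$, and then invoke the classification of such automorphism groups for representation rings of compact Lie groups.

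First I would construct a homomorphism $\Theta\colon \Aut(G)\to \Aut^{\times}(\mathrm{Fus}(G))$, where $\Aut^{\times}(\mathrm{Fus}(G))$ denotes the group of unital ring automorphisms of the fusion ring that permute the distinguished basis $\widehat G$ of equivalence classes of irreducible representations. By Proposition 3.1(5), any $\alpha\in\Aut(G)$ sends an irreducible representation to an irreducible representation, hence induces a permutation $\widehat\alpha$ of $\widehat G$. Because $\alpha$ is a $*$-homomorphism satisfying $(\alpha\otimes\alpha)\Phi=\Phi\circ\alpha$, tensor products of representations are sent to tensor products under $\alpha$, so $\widehat\alpha$ preserves the fusion rules. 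Using that characters of inequivalent irreducibles are linearly independent together with $\alpha(\chi_u)=\chi_{\alpha(u)}$, one sees that $\ker\Theta=\Aut_\chi(G)$, giving an injection $\mathrm{Out}_\chi(G)\hookrightarrow \Aut^{\times}(\mathrm{Fus}(G))$.

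Next, assuming $G$ shares fusion rules with a connected compact Lie group $L$, identify $\mathrm{Fus}(G)\cong R(L)$ as based rings, where $R(L)$ is the representation ring of $L$. The based ring $R(L)$ is finitely generated with a finite set of fundamental irreducibles, and any based ring automorphism is determined by its action on this finite set, so $\Aut^{\times}(R(L))$ is finite, yielding the first assertion.

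For the refined claim, take $L$ simply connected simple with Lie algebra $\mathfrak l$ and $G$ a $q$-deformation of $L$, so that $\mathrm{Fus}(G)\cong R(L)$. Classical highest-weight theory together with Weyl's dimension formula identifies $\Aut^{\times}(R(L))$ canonically with the automorphism group of the Dynkin diagram of $\mathfrak l$. By the classification of simple Lie algebras, this group is trivial for types $A_1, B_n, C_n, E_7, E_8, F_4, G_2$; of order $2$ for $A_n$ ($n\geq 2$), $D_n$ ($n\geq 5$), and $E_6$; and is $S_3$ for $D_4$ (triality). Every subgroup of $S_3$ has order in $\{1,2,3,6\}$, completing the proof. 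The main technical step is the identification $\Aut^{\times}(R(L))\cong\Aut(\mathrm{Dynkin}(\mathfrak l))$ in the simply connected simple case: the key point is that any based ring automorphism must permute the fundamental representations (characterized intrinsically from the based ring structure, e.g.\ via minimality under tensor factorization), and the tensor-product combinatorics among them recovers the Dynkin diagram, forcing a diagram automorphism.
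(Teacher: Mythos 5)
Your overall strategy coincides with the paper's: both arguments embed $\mathrm{Out}_\chi(G)$ into the automorphism group of the representation ring $\mathbb{Z}[\widehat{G}]$ with its distinguished basis of irreducible classes (the paper phrases this via \emph{order} isomorphisms of the representation ring, which is the same thing, since an order isomorphism permutes the minimal nonzero positive elements, i.e.\ the basis), and your identification of the kernel of $\Theta$ with $\Aut_\chi(G)$ matches the paper's. The divergence is in how the Lie-theoretic input is supplied: the paper cites Handelman's theorem that for a connected compact group the order-automorphism group of the representation ring is isomorphic to the outer automorphism group of the group, and then quotes the classical facts about $\mathrm{Out}$ of compact Lie groups; you instead try to establish the needed properties of $\Aut^{\times}(R(L))$ by hand.

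That is where the gaps lie. First, your finiteness argument for a general connected compact Lie group is a non sequitur: that a based ring automorphism is determined by its values on a finite generating set does not make the automorphism group finite, because nothing yet bounds the possible images of the generators. Indeed $R(T^2)=\mathbb{Z}[\mathbb{Z}^2]$ is generated by two basis elements and their inverses, yet its based automorphism group is $GL_2(\mathbb{Z})$, which is infinite; any correct proof must constrain the images of the generators, e.g.\ via the dimension character together with the fact that a compact \emph{semisimple} group has only finitely many irreducibles of each dimension. (This example also shows the hypothesis is really being used in the semisimple case; the paper's appeal to finiteness of $\mathrm{Out}(L)$ for all connected compact Lie groups has the same blind spot for higher-rank central tori, but at least delegates the claim to a citation rather than to an invalid inference.) Second, for the refined claim, the identification $\Aut^{\times}(R(L))\cong\Aut(\mathrm{Dynkin}(\mathfrak{l}))$ for $L$ simply connected simple is exactly the substance of the result the paper outsources to Handelman; your sketch (fundamental representations are intrinsically characterized, and the tensor combinatorics among them recovers the diagram) is plausible but is an assertion, not a proof --- the ``minimality under tensor factorization'' characterization and the reconstruction of bond multiplicities from decompositions of $V_{\omega_i}\otimes V_{\omega_j}$ both need to be made precise and verified. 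So the route is legitimate and parallel to the paper's, but the two steps carrying the real mathematical content are the ones left open.
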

\begin{proof} Any automorphism $\alpha$ of $G$ induces an order isomorphism of its representation ring $\mathbb{Z}[\widehat{G}]$. And clearly, if $\alpha\in \Aut_{\chi}(G)$, then $\alpha$ induces the trivial isomorphism of its representation ring. So, $\mathrm{Out}_{\chi}(G)$ is easily seen to be a subgroup of the group of order isomorphisms of the representation ring $\mathbb{Z}[\widehat{G}]$. 

But, by results of Handelman\cite{Hand}, it follows that for connected compact groups, the group of order isomorphisms of the representation ring of the group and its outer automorphism group are isomorphic. The proposition now follows from the facts that for connected compact Lie groups, the outer automorphism group is finite and that for simple compact Lie groups, it can only have order $1, 2,$ or $6$.
 \end{proof}
Similar arguments can be used to show that deformations of Hopfian connected compact groups are Hopfian.
\begin{defn} A compact group $G$ is said to be Hopfian if every surjective homomorphism is an isomorphism. In other words, there exists no proper normal subgroup $N$ of $G$ such that $G/N\cong G$.    \end{defn}
Analogously, one can define Hopfian compact quantum groups. 
\begin{defn} A compact quantum group $G=(A,\Phi)$ is said to be Hopfian if every injective quantum homomorphism $\phi :A\rightarrow A$ is also surjective. \end{defn}
\begin{pro} Let $G=(A,\Phi)$ be a compact quantum group which has fusion rules identical to those of a compact connected group $\mathcal{G}$. Suppose that $\mathcal{G}$ is a Hopfian compact group. Then, $G$ is Hopfian as well.  \end{pro}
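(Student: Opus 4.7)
The plan is to descend to the representation ring of $G$, apply Handelman's theorem together with the Hopfian hypothesis on $\mathcal{G}$, and then lift the conclusion back to the $c^{*}$-algebraic level via Peter-Weyl.

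First, given an injective quantum endomorphism $\phi\colon A\to A$, I would restrict it to the canonical dense Hopf $*$-algebra $\mathcal{A}$. Since $\phi$ is a $*$-homomorphism intertwining the coproduct, the same argument as in Proposition~3.1(1) shows that $\phi$ sends matrix entries of finite-dimensional representations to matrix entries of finite-dimensional representations; hence $\phi(\mathcal{A})\subseteq\mathcal{A}$ and $\phi|_{\mathcal{A}}$ is an injective Hopf $*$-algebra endomorphism. Next, I would introduce the induced map $\phi_{*}\colon\mathbb{Z}[\widehat{G}]\to\mathbb{Z}[\widehat{G}]$ on the representation ring, defined on classes by $\phi_{*}[u]=[\phi(u)]$, where $\phi(u)$ is the representation with entries $\phi(u_{ij})$. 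Routine checks show it is well-defined on equivalence classes, respects both $\oplus$ and $\otimes$, and sends the positive cone into itself. For injectivity of $\phi_{*}$, I would use the Peter-Weyl decomposition $\mathcal{A}=\bigoplus_{[u]\in\widehat{G}}C(u)$ into matrix-coefficient spaces: if $\phi(v)\cong\phi(w)$ for irreducibles $v,w$, then $\phi(C(v))=\phi(C(w))$ inside $\mathcal{A}$, and injectivity of $\phi$ forces $C(v)=C(w)$, hence $[v]=[w]$. Splitting an arbitrary kernel element into its positive and negative parts and using the linear independence of matrix-coefficient spaces of distinct irreducibles extends this to full injectivity of $\phi_{*}$.

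Transporting along the given identification of fusion rings $\mathbb{Z}[\widehat{G}]\cong\mathbb{Z}[\widehat{\mathcal{G}}]$, the map $\phi_{*}$ becomes an injective, positive-cone-preserving ring endomorphism of $\mathbb{Z}[\widehat{\mathcal{G}}]$. As in the proof of Proposition~3.8, I would invoke Handelman's results \cite{Hand} to deduce that such a map is induced by a continuous endomorphism $f\colon\mathcal{G}\to\mathcal{G}$, with injectivity of $\phi_{*}$ corresponding to surjectivity of $f$. Since $\mathcal{G}$ is Hopfian, $f$ is an isomorphism, so $\phi_{*}$ is in fact a bijection of $\widehat{G}$ preserving dimensions. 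In particular $\phi(C(u))=C(\phi_{*}[u])$ for every irreducible $u$, and summing over $\widehat{G}$ yields $\phi(\mathcal{A})=\mathcal{A}$. Since $\phi\colon A\to A$ is an injective $*$-homomorphism of $c^{*}$-algebras, it is isometric, so $\phi(A)$ is closed; containing the dense subalgebra $\mathcal{A}$, it must equal $A$.

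The main obstacle is the third step: one needs the statement that, for a compact connected group, an \emph{injective} positive-cone-preserving ring endomorphism of the representation ring is induced by a \emph{surjective} continuous group endomorphism of $\mathcal{G}$. This is a mild strengthening of the order-isomorphism version of Handelman's result already cited in Proposition~3.8, but belongs to the same circle of ideas; the remainder of the argument is essentially Peter-Weyl bookkeeping combined with the fact that injective $*$-maps of $c^{*}$-algebras are isometric.
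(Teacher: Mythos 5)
Your overall strategy --- pass to the representation ring, apply Handelman, and lift back via Peter--Weyl --- is the right one, and your first, second, and final steps (well-definedness and injectivity of $\phi_*$, and the dimension-count/isometry argument recovering $\phi(A)=A$ once $\phi_*$ is bijective on $\widehat{G}$) are sound. The gap is exactly where you locate it: the third step. You need that an injective, positive-cone-preserving ring endomorphism of $\mathbb{Z}[\widehat{\mathcal{G}}]$ is induced by a \emph{surjective} continuous endomorphism of $\mathcal{G}$. That is not the form of Handelman's theorem the paper relies on (which concerns order \emph{isomorphisms} of representation rings of compact connected groups), and you do not prove the strengthening; as written the argument is incomplete at its pivotal point.

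The paper closes this gap without strengthening Handelman, by a detour through the Galois correspondence. The image of $\widehat{G}$ under $\phi_*$ is a subobject of $\widehat{\mathcal{G}}$ (it contains the trivial class and is closed under conjugation and under taking irreducible constituents of products, since $\phi_*$ is multiplicative and sends irreducibles to irreducibles); by the correspondence between subobjects and closed normal subgroups it equals $\widehat{\mathcal{G}/\mathcal{N}}$ for some closed normal subgroup $\mathcal{N}$ of $\mathcal{G}$, which is proper and nontrivial precisely when $\phi_*$ fails to be surjective. Then $\phi_*$ is an order \emph{isomorphism} of $\mathbb{Z}[\widehat{\mathcal{G}}]$ onto $\mathbb{Z}[\widehat{\mathcal{G}/\mathcal{N}}]$, and since $\mathcal{G}/\mathcal{N}$ is again compact and connected, the isomorphism form of Handelman's theorem gives $\mathcal{G}\cong\mathcal{G}/\mathcal{N}$, contradicting Hopficity of $\mathcal{G}$. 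If you replace your third step by this argument (equivalently, run your endgame in contrapositive form: non-surjectivity of $\phi$ forces non-surjectivity of $\phi_*$ on $\widehat{G}$), the proof is complete using only the version of Handelman already invoked in Proposition~3.8.
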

\begin{proof} Suppose not, then there exists a quantum homomorphism $\alpha :A\rightarrow A$ which is injective but not surjective. Then the induced map of the representation ring $\widehat{\alpha} :\mathbb{Z}[\widehat{G}]\rightarrow \mathbb{Z}[\widehat{G}]$ is injective and order preserving but not surjective.

But $\mathbb{Z}[\widehat{G}]$ is also the representation ring of the compact connected group $\mathcal{G}$. Now the range of $\widehat{\alpha}$ corresponds to a proper subobject of $\widehat{\mathcal{G}}$, which by the Galois correspondence between the subobjects and normal subgroups, corresponds to a proper normal subgroup $\mathcal{N}$ of $\mathcal{G}$ and hence, the representation ring of $\mathcal{G}/\mathcal{N}$ and $\mathcal{G}$ are order isomorphic, and so once again by \cite{Hand}, it follows that $\mathcal{G}$ is isomorphic to $\mathcal{G}/\mathcal{N}$, which is a contradiction.    \end{proof}

\section{Inner Automorphisms and Normal Subgroups I}
In the classical case of compact groups, subgroups are said to be normal if they are stable under all automorphisms of the form $\alpha_s(g) = sgs^{-1}$, $s, g \in G$, a compact group. So, for a compact group $G$, a subgroup $N$ is, by definition, normal if for every automorphism of the form $\alpha_s$, $s \in G$, there exists an automorphism $\beta: N \rightarrow N$ such that: 
 \begin{center}
$\xymatrix{
G \ar[r]^{\alpha_s} & G \\
N \ar@{^{(}->}[u]^{i}\ar[r]^{\beta} & N\ar@{^{(}->}[u]^{i} }$
\end{center}
commutes. 

In fact, more is true: if $N$ is a normal subgroup of $G$, then it corresponds to a unique subobject $\Gamma_N$ of $\widehat{G}$ where $\widehat{G}$ denotes the equivalence classes of all irreducible representations of $G$ \cite{Hew-Ross}. So, it follows that $N$ is always stable under any representation class preserving automorphism (the converse of course is trivial as $\mathrm{Inn}(G) \subseteq \Aut_{\chi}(G)$). In this section, we show that this holds true in the quantum case as well, under certain assumptions. 

Essentially, we want to show that, under certain conditions, given a compact quantum group $G=(A, \Phi)$, $\alpha \in \Aut_{\chi}(G)$ and $N = (B, \Psi)$ a normal subgroup, with $\rho: A \to B$ as the corresponding surjection, there exists a quantum group automorphism $\beta: N \to N$ such that:
\begin{center}
$\xymatrix{
A \ar[d]_{\rho} \ar[r]^{\alpha} & A \ar[d]^{\rho} \\
B \ar[r]_{\beta} & B }$
\end{center} 
commutes. 
\begin{lem}
  Let $G=(A,\Phi)$ be a compact quantum group with $H=(B,\Psi)$ a subgroup of it and $\rho :A\to B$ the corresponing surjection. Let $\alpha :A\rightarrow A$ be an automorphism of G and $\beta :B\rightarrow B$ be a $c^*$-algebraic automorphism such that \\
\centerline{\xymatrix{
A \ar[d]^{\rho} \ar[r]^{\alpha} &A\ar[d]^{\rho}\\
B \ar[r]^{\beta} &B}}\\ 
commutes. Then $\beta$ is also a quantum group automorphism.
\end{lem}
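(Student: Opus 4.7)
The plan is to show directly that $\beta$ intertwines the comultiplication $\Psi$, i.e.\ that $(\beta\otimes\beta)\Psi = \Psi\circ\beta$, which together with $\beta$ being a $c^*$-algebraic automorphism is precisely the definition of a quantum group automorphism of $H$.

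The key observation is that since $\rho:A\to B$ is surjective, every element $b\in B$ can be written as $b=\rho(a)$ for some $a\in A$. So it suffices to verify the identity on elements of the form $\rho(a)$. For such an element, I would carry out the following diagram chase, using in order: the commutativity of the given square ($\beta\circ\rho=\rho\circ\alpha$), the subgroup relation $(\rho\otimes\rho)\Phi=\Psi\circ\rho$, the fact that $\alpha$ is a quantum group automorphism of $G$ so $(\alpha\otimes\alpha)\Phi=\Phi\circ\alpha$, the subgroup relation again, and finally the commuting square once more:
\begin{align*}
\Psi(\beta(\rho(a))) &= \Psi(\rho(\alpha(a))) \\
&= (\rho\otimes\rho)\Phi(\alpha(a)) \\
&= (\rho\otimes\rho)(\alpha\otimes\alpha)\Phi(a) \\
&= ((\rho\circ\alpha)\otimes(\rho\circ\alpha))\Phi(a) \\
&= ((\beta\circ\rho)\otimes(\beta\circ\rho))\Phi(a) \\
&= (\beta\otimes\beta)(\rho\otimes\rho)\Phi(a) \\
&= (\beta\otimes\beta)\Psi(\rho(a)).
\end{align*}
Since $\rho$ is surjective, this yields $(\beta\otimes\beta)\Psi=\Psi\circ\beta$ on all of $B$, completing the proof.

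Honestly, there is no serious obstacle here: the statement is essentially a formal consequence of surjectivity of $\rho$ together with the two intertwining identities that $\alpha$ and $\rho$ already satisfy. The only thing to be slightly careful about is the well-definedness step, namely that the computation does not depend on the choice of preimage $a$ of $b$ — but this is automatic because the right-hand side $(\beta\otimes\beta)\Psi(b)$ depends only on $b=\rho(a)$, so any two choices of preimage give the same value of $\Psi(\beta(b))$. No further machinery (Haar state, antipode, counit) is needed.
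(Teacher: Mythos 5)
Your proof is correct and is essentially the same computation as the paper's, which runs the identical chain of equalities (surjectivity of $\rho$, the commuting square $\beta\circ\rho=\rho\circ\alpha$, the subgroup relation $(\rho\otimes\rho)\Phi=\Psi\circ\rho$, and $(\alpha\otimes\alpha)\Phi=\Phi\circ\alpha$) merely written in the opposite order, starting from $(\beta\otimes\beta)\Psi(b)$. Nothing further is needed.
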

\begin{proof}  
 We want to show that $\Psi \circ \beta = (\beta\otimes\beta)\Psi$. Since $\rho$ is surjective, given any $b\in H$, there exists some $a\in A$ such that $\rho(a)=b$. Now, 
\begin{align*}
(\beta\otimes\beta)\Psi(b)
&=(\beta\otimes\beta)\Psi(\rho(a)) \\
&=(\beta\circ\rho \otimes \beta\circ\rho)\Phi(a)\\
&=(\rho\otimes\rho)(\alpha\otimes\alpha)\Phi(a)\\
&=\Psi(\rho\circ\alpha(a)) \\
&=\Psi(\beta\circ\rho(a))=\Psi(\beta(b))
\end{align*}
and so we are done. 
 \end{proof}
 \begin{pro}
Let $G=(A,\Phi)$ be a compact quantum group and let $H=(B,\Psi)$ be a subgroup of $G$. Let $\alpha$ be an automorphism of $G$ and let $\beta$ be an automorphism of $H$ such that \\
\centerline{\xymatrix{
A \ar[d]^{\rho} \ar[r]^{\alpha} &A\ar[d]^{\rho}\\
B \ar[r]^{\beta} &B}}\\ 
commutes. Then $\alpha :A_{G/H}\rightarrow A_{G/H}$ is a $c^\ast$-algebraic automorphism. Similarly, $\alpha : A_{H\backslash G}\rightarrow A_{H\backslash G}$ is a $c^\ast$-algebraic automorphism.  
  \end{pro}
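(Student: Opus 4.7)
The plan is to verify directly that $\alpha$ maps $A_{G/H}$ into itself, and then use the fact that $\alpha^{-1}$ sits in an analogous commutative square (with $\beta^{-1}$ in place of $\beta$) to deduce surjectivity onto $A_{G/H}$. The argument for $A_{H\backslash G}$ is entirely symmetric.

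For the forward inclusion, I would take $a \in A_{G/H}$, so that $(\id \otimes \rho)\Phi(a) = a \otimes 1$, and compute $(\id \otimes \rho)\Phi(\alpha(a))$. Using that $\alpha$ is a quantum group automorphism so $\Phi \circ \alpha = (\alpha \otimes \alpha)\Phi$, followed by the relation $\rho \circ \alpha = \beta \circ \rho$ coming from the commutative diagram, this becomes
\begin{align*}
(\id \otimes \rho)\Phi(\alpha(a))
&= (\id \otimes \rho)(\alpha \otimes \alpha)\Phi(a) \\
&= (\alpha \otimes \beta \circ \rho)\Phi(a) \\
&= (\alpha \otimes \beta)\bigl((\id \otimes \rho)\Phi(a)\bigr) \\
&= (\alpha \otimes \beta)(a \otimes 1) \\
&= \alpha(a) \otimes 1,
\end{align*}
where the final equality uses that $\beta$ is unital. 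Hence $\alpha(a) \in A_{G/H}$, i.e.\ $\alpha(A_{G/H}) \subseteq A_{G/H}$.

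For the reverse inclusion, observe that since both $\alpha$ and $\beta$ are automorphisms, the inverses $\alpha^{-1}$ and $\beta^{-1}$ also satisfy $\rho \circ \alpha^{-1} = \beta^{-1} \circ \rho$ (apply $\beta^{-1}$ on the left and $\alpha^{-1}$ on the right of the original identity). Applying the same calculation with $\alpha^{-1}$ and $\beta^{-1}$ shows $\alpha^{-1}(A_{G/H}) \subseteq A_{G/H}$, giving surjectivity of the restriction. Since $\alpha$ is already a $*$-homomorphism on $A$, its restriction to $A_{G/H}$ is a $c^*$-algebraic automorphism.

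For $A_{H\backslash G}$ the argument is identical: for $a \in A_{H\backslash G}$ with $(\rho \otimes \id)\Phi(a) = 1 \otimes a$, the same manipulation yields $(\rho \otimes \id)\Phi(\alpha(a)) = 1 \otimes \alpha(a)$, and the inverse argument gives surjectivity. There is no substantive obstacle here; the only thing to be careful about is making sure to use both $\Phi \circ \alpha = (\alpha \otimes \alpha)\Phi$ and the commutativity $\rho \circ \alpha = \beta \circ \rho$ in the right order, and to invoke the invertibility of $\alpha$ and $\beta$ at the very end to promote an inclusion to an equality.
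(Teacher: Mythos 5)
Your proof is correct and follows essentially the same route as the paper: a direct computation using $\Phi\circ\alpha=(\alpha\otimes\alpha)\Phi$ and $\rho\circ\alpha=\beta\circ\rho$ to get $\alpha(A_{G/H})\subseteq A_{G/H}$, combined with the observation that $\rho\circ\alpha^{-1}=\beta^{-1}\circ\rho$ to upgrade the inclusion to an equality. (Incidentally, you apply $\rho$ on the leg matching the paper's stated definition of $A_{G/H}$, whereas the paper's own proof silently swaps the two coset-space conditions; the computation is symmetric either way.)
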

\begin{proof} 
First we note that $$\rho\circ\alpha = \beta\circ\rho \iff \rho\circ\alpha^{-1}=\beta^{-1}\circ \rho$$
Now, we just have to show that $$\alpha(A_{G/H})\subseteq A_{G/H} $$
This is clear as if $a\in A_{G/H}$, then by definition, $(\rho\otimes \id)\Phi(a)=1\otimes a$. Now,
\begin{align*} 
(\rho\otimes \id)\Phi(\alpha (a))
&= (\rho\otimes \id)(\alpha\otimes\alpha)\Phi(a)\\
&=(\beta\circ\rho\otimes \alpha)\Phi(a)\\
&=(\beta\otimes\alpha)(1\otimes a)\\
&= 1\otimes \alpha(a)
\end{align*}
and so $\alpha (a)\in A_{G/H}$. One can similarly prove the proposition in the case of $A_{H\backslash G}$. \end{proof}
\begin{pro}
 Let $G=(A,\Phi)$ be a coamenable compact quantum group and let $H=(B,\Psi)$ be a subgroup of $G$. Let $\alpha :A\rightarrow A$ be an automorphism of $G$, such that $
\alpha :A_{G/H}\rightarrow A_{G/H}$ is $c^\ast$-algebraic automorphism, then there exists $\beta :B\rightarrow B$ such that $\beta$ is also an automorphism of $H$ and such that\\
\centerline{\xymatrix{
A \ar[d]^{\rho} \ar[r]^{\alpha} &A\ar[d]^{\rho}\\
B \ar[r]^{\beta} &B}}\\ commutes.
\end{pro}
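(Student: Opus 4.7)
I would reduce the problem to showing that the two surjections $\rho$ and $\pi:=\rho\circ\alpha$ from $A$ onto $B$ have identical kernels. Once that is established, Wang's Lemma 3.3 produces a $c^*$-algebra isomorphism $\beta:B\to B$ with $\beta\circ\rho=\rho\circ\alpha$, and Lemma 4.1 then upgrades $\beta$ to a quantum group automorphism of $H$. Note first that $\pi$ is itself a surjective Hopf morphism, since
$$(\pi\otimes\pi)\Phi=(\rho\otimes\rho)(\alpha\otimes\alpha)\Phi=(\rho\otimes\rho)\Phi\circ\alpha=\Psi\circ\pi,$$
so it presents a quantum subgroup of $G$ which I will denote $H_\pi$.

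The first step is to prove $A_{G/H_\pi}=A_{G/H}$. The key identity is the rearrangement $(\id\otimes\alpha)\Phi=(\alpha^{-1}\otimes\id)\Phi\circ\alpha$ of $\Phi\circ\alpha=(\alpha\otimes\alpha)\Phi$. It yields $(\id\otimes\pi)\Phi(a)=(\alpha^{-1}\otimes\rho)\Phi(\alpha(a))$, so applying $\alpha\otimes\id$ one sees that $a\in A_{G/H_\pi}$ if and only if $(\id\otimes\rho)\Phi(\alpha(a))=\alpha(a)\otimes 1$, i.e.\ $\alpha(a)\in A_{G/H}$. The hypothesis $\alpha(A_{G/H})=A_{G/H}$ then closes the equivalence with $a\in A_{G/H}$.

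The second step is to deduce $E_{G/H}=E_{G/H_\pi}$. Both are conditional expectations onto the common range $A_{G/H}$, and both preserve $h_G$ (immediate from the left-invariance identity $(h_G\otimes\id)\Phi(a)=h_G(a)1$). Because $G$ is coamenable, $h_G$ is faithful on $A$, so the $h_G$-preserving conditional expectation onto a fixed $c^*$-subalgebra is unique; this is the crucial use of coamenability of $G$. Applying $\epsilon_G\otimes\id$ to the definitions of the two conditional expectations (valid on all of $A$, again by coamenability of $G$) gives $\epsilon_G\circ E_{G/H}=h_H\circ\rho$ and $\epsilon_G\circ E_{G/H_\pi}=h_H\circ\pi$, so $h_H\circ\rho=h_H\circ\rho\circ\alpha$ as states on $A$.

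Finally, $H$ is coamenable by Proposition 2.8, so $h_H$ is faithful on $B$. For $a\in\ker\rho$ one has $h_H(\rho(a^*a))=0$, hence $h_H(\rho(\alpha(a))^*\rho(\alpha(a)))=h_H(\rho\circ\alpha(a^*a))=0$, and faithfulness of $h_H$ forces $\rho(\alpha(a))=0$. Thus $\alpha(\ker\rho)\subseteq\ker\rho$; applying the same argument to $\alpha^{-1}$ (which also restricts to an automorphism of $A_{G/H}$) gives the reverse inclusion, so $\ker\rho=\ker(\rho\circ\alpha)$, and the proof concludes via Wang's Lemma 3.3 and Lemma 4.1. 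The main obstacle is the uniqueness of the $h_G$-preserving conditional expectation in the second step; this is exactly where coamenability of $G$ is essential, since without faithful $h_G$ the subalgebra $A_{G/H}$ would no longer determine its conditional expectation and the identification $h_H\circ\rho=h_H\circ\pi$ would fail.
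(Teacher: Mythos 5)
Your proposal is correct and follows essentially the same route as the paper's proof: both reduce the statement to the kernel identity $\ker\rho=\ker(\rho\circ\alpha)$ so that Lemma 3.3 and Lemma 4.1 apply, and both hinge on the uniqueness of the $h_G$-preserving conditional expectation onto $A_{G/H}$ (your identity $E_{G/H}=E_{G/H_\pi}$ is exactly the paper's $\alpha^{-1}\circ E_{G/H}\circ\alpha=E_{G/H}$, since $E_{G/H_\pi}=(\id\otimes h_H\circ\rho\circ\alpha)\Phi=\alpha^{-1}\circ E_{G/H}\circ\alpha$), together with coamenability supplying faithfulness of $h_G$ and of $h_H$ and boundedness of $\epsilon_G$. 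Your endgame --- composing with $\epsilon_G$ to get $h_H\circ\rho=h_H\circ\rho\circ\alpha$ and then using faithfulness of $h_H$ on $\rho(a^*a)$ --- is a slightly cleaner packaging of the paper's contradiction argument with a positive element, but it is not a genuinely different proof.
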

\begin{proof} Let us first note that since $G$ is coamenable, then by Proposition 2.8, $H$ is coamenable as well. Now to prove this proposition, we use Lemma 3.3. So we have to show that $$\rho(a)=0\iff \rho\circ\alpha (a)=0$$
 Suppose that $a\in A_{G/H}$ and $\rho(a)=0$. Then as $\rho(a)=\epsilon_{G}(a)\cdot 1\Rightarrow \epsilon_{G}(a)=0$. 
 
But then 
\begin{alignat*}{3}
&&\epsilon_G (\alpha(a))&=\epsilon_G(a) \\
 \Rightarrow&\mathrm{for}\; a\in A_{G/H},& \rho(a)&=0\\
&\text{if and only if} &\epsilon_G(a)&=0\\
&\text{if and only if} &\rho\circ\alpha(a)&=0
\end{alignat*}
Now, say for some $a\in A$ such that $a>0$, $\rho(a)=0$ and $\rho(\alpha(a))\neq 0$. In that case we have $ h_H(\rho\circ\alpha(a))\cdot 1>0$, where $h_H$ denotes the Haar measure on $H$, which is true since $H$ is coamenable.

It now follows from the equalities $E_{G/H}=(\id\otimes h_H\circ\rho)\Phi$, $(\rho\otimes \rho)\Phi=\Psi\circ \rho$ and $(\id\otimes h_H)\Psi(\cdot)=h_H(\cdot)\cdot 1$ that $ \rho (E_{G/H}\circ\alpha(a))\neq 0 $. But $\alpha$ preserves the Haar measure $h_G$ of G and so does $E_{G/H}$. Now since $\alpha$ is also a $c^\ast$-algebraic automorphism of $A_{G/H}$, we have that $E_{G/H}$ and $\alpha^{-1}\circ E_{G/H}\circ\alpha$ are both $h_G$-preserving conditional expectations onto $A_{G/H}$. Since $h_G$ is faithful, we have by Corollary II.6.10.8 of \cite{Black}, that $$\alpha^{-1}\circ E_{G/H}\circ\alpha = E_{G/H}$$  and hence, $\rho(\alpha(E_{G/H}(a)))\neq 0$.
But as $\rho(E_{G/H}(a))=0$ and $E_{G/H}(a)\in A_{G/H}$, this gives us a contradiction by the first part of the proof and so, there exists a $\beta :B\rightarrow B$ such that  \\
\centerline{\xymatrix{
A \ar[d]^{\rho} \ar[r]^{\alpha} &A\ar[d]^{\rho}\\
B \ar[r]^{\beta} &B}}\\
commutes and is an automorphism of $H$, by Lemma 4.1. \end{proof}

In the case that $N$ is a normal subgroup, we have a more general result. This is because if $\tau=((u^\tau_{ij}))\in M_{d_{\tau}}(A)$ is an irreducible representation of $G$, $d_\tau$ denotes the dimension $\tau$ and $(\tau_{|N},1_N)$ denotes the multiplicity of the trivial representation of $N$ in the restriction of $\tau$ to $N$, then 
$$E_{G/N}(u_{ij}^{\tau})= \begin{cases}
		u_{ij}^{\tau}  & \forall i,j\; \mbox{if } (\tau_{|N},1_N)=d_{\tau}  \\
		0 & \forall i,j\; \mbox{if } (\tau_{|N},1_N)=0
\end{cases} $$
This is true as in the first case, if $(\tau_{|N},1_N)=d_{\tau}$, then $u_{ij}^{\tau}\in A_{G/N}$ for all $i,j$, and in the second case, since the Haar measure of any compact quantum group has the property that it sends the matrix entries of any non-trivial irreducible representation to $0$, we have for all $i,j$, $h_N(\rho(u^{\tau}_{ij}))=0$ as $(\tau_{|N},1_N)=0$ and so, $E_{G/N}(u_{ij}^{\tau})=\sum_{k}h_N(\rho(u^{\tau}_{kj}))u^{\tau}_{ik}=0 $.

Now if $\alpha$ is an automorphism of $G$ such that $\alpha:A_{G/N}\to A_{G/N}$ is a $c^\ast$-algebraic automorphism, we have $$\alpha^{-1} \circ E_{G/N} \circ \alpha = E_{G/N}$$
This is because for any irreducible representation $\tau=((u^{\tau}_{ij}))\in M_{d_{\tau}}(A)$ of $G$, we have that if $(\tau_{|N},1_N)=d_{\tau}$, then $(\alpha(\tau)_{|N},1_N)=d_{\alpha(\tau)}$ and if $(\tau_{|N},1_N)=0$, then $(\alpha(\tau)_{|N},1_N)=0$, where $\alpha(\tau)=((\alpha(u^{\tau}_{ij})))$.

So, by essentially repeating the steps of the proof of the previous proposition, we have the following 
\begin{pro}
  Let $G=(A,\Phi)$ be a compact quantum group and $N=(B,\Psi)$ be a normal subgroup of $G$, with $\rho :A\to B$ the corresponding surjection. Suppose that the Haar measure $h_N$ of $N$ is faithful. Let $\alpha: A \rightarrow A$ is an automorphism of $G$ such that $\alpha: A_{G/N} \to A_{G/N}$ is a $c^\ast$-algebraic automorphism, then there exists $\beta: B \to B$ such that $\beta$ is an automorphism of $N$ and such that:
\begin{center}
$\xymatrix{
A \ar[d]_\rho \ar@{->}[r]^{\alpha} & A \ar[d]^\rho \\
B \ar@{->}[r]_{\beta} & B }$
\end{center} 
commutes.
\end{pro}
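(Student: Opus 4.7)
The plan is to reduce to Wang's lemma (Lemma 3.3) applied to the surjections $\rho$ and $\rho\circ\alpha$, both from $A$ onto $B$. Once $\ker(\rho) = \ker(\rho\circ\alpha)$ is established, Lemma 3.3 produces a $c^\ast$-algebra isomorphism $\beta : B \to B$ with $\rho\circ\alpha = \beta\circ\rho$, and Lemma 4.1 promotes $\beta$ to a quantum group automorphism of $N$. Replacing $\alpha$ by $\alpha^{-1}$ (which also restricts to a $c^\ast$-algebraic automorphism of $A_{G/N}$) supplies the opposite inclusion for free, so the task reduces to proving: for positive $a \in A$ with $\rho(a) = 0$, one has $\rho(\alpha(a)) = 0$.

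I proceed in two stages, parallel to the proof of Proposition 4.4. The first stage handles $a \in A_{G/N}$. The normality of $N$ is crucial here: by Theorem 2.7(3), $\mathcal{A}\cap A_{G/N}$ is spanned by matrix entries $u^\tau_{ij}$ coming from irreducibles $\tau$ with $(\tau_{|N},1_N) = d_\tau$, and on these one has $\rho(u^\tau_{ij}) = \delta_{ij}\cdot 1_B$. Consequently $\rho$ sends $\mathcal{A}\cap A_{G/N}$ into $\mathbb{C}\cdot 1_B$, and the density of $E_{G/N}(\mathcal{A}) \subseteq \mathcal{A}\cap A_{G/N}$ in $A_{G/N}$ extends $\epsilon_G|_{\mathcal{A}\cap A_{G/N}}$ to a bounded character $\tilde\epsilon_G : A_{G/N}\to\mathbb{C}$ with $\rho|_{A_{G/N}} = \tilde\epsilon_G(\cdot)\cdot 1_B$. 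Since $\alpha(\mathcal{A}) = \mathcal{A}$ and $\epsilon_G\circ\alpha = \epsilon_G$ on $\mathcal{A}$ (Proposition 3.1), and $\alpha$ stabilizes $A_{G/N}$, passing to the closure gives $\tilde\epsilon_G\circ\alpha = \tilde\epsilon_G$ on $A_{G/N}$, hence $\rho(\alpha(a)) = \rho(a)$ for all $a \in A_{G/N}$. This substitutes, without a coamenability hypothesis, for the counit argument used in Proposition 4.4.

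The second stage bootstraps to arbitrary $a \in A$ via the conditional expectation. The invariance of $h_N$ gives $\rho\circ E_{G/N} = h_N(\rho(\cdot))\cdot 1_B$, so $\rho(E_{G/N}(a)) = 0$ whenever $\rho(a) = 0$, placing $E_{G/N}(a) \in \ker(\rho)\cap A_{G/N}$. Invoking the intertwining $\alpha\circ E_{G/N} = E_{G/N}\circ\alpha$ assembled in the paragraph preceding the statement, the first stage then yields
\[
h_N(\rho(\alpha(a)))\cdot 1_B \;=\; \rho(E_{G/N}(\alpha(a))) \;=\; \rho(\alpha(E_{G/N}(a))) \;=\; 0.
\]
Since $a\geq 0$ implies $\rho(\alpha(a)) \geq 0$, faithfulness of $h_N$ forces $\rho(\alpha(a)) = 0$, completing the proof.

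The main obstacle, relative to Proposition 4.4, is the absence of a bounded counit on $A$ under the weaker hypothesis here; it is resolved by observing that the normality of $N$ ensures $\rho|_{A_{G/N}}$ lands in $\mathbb{C}\cdot 1_B$, producing the intrinsic character $\tilde\epsilon_G$ on $A_{G/N}$. The only other nontrivial ingredient, the commutation of $\alpha$ with $E_{G/N}$, is already supplied by the paragraph preceding the proposition from the fact that $\alpha$ preserves the dichotomy $(\tau_{|N},1_N)\in\{0,d_\tau\}$ on irreducibles.
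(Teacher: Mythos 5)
Your proposal is correct and follows the route the paper intends: the paper's proof of this proposition is literally ``repeat Proposition 4.3,'' using the two facts assembled in the preceding paragraph (the dichotomy for $E_{G/N}$ on matrix entries and the resulting identity $\alpha^{-1}\circ E_{G/N}\circ\alpha=E_{G/N}$), together with Lemma 3.3 and Lemma 4.1, exactly as you do. The one place where a verbatim repetition of Proposition 4.3 would be incomplete is its first stage, which invokes $\rho(a)=\epsilon_G(a)\cdot 1$ on all of $A_{G/N}$ and hence tacitly needs the counit to be bounded there --- a fact supplied by coamenability in Proposition 4.3 but not among the hypotheses here. You patch this correctly: normality forces $\rho(u^\tau_{ij})=\delta_{ij}1_B$ on the matrix entries spanning $\mathcal{A}\cap A_{G/N}$, so $\rho|_{A_{G/N}}$ is automatically scalar-valued and the character $\tilde\epsilon_G$ it defines is bounded for free, giving $\rho\circ\alpha=\rho$ on $A_{G/N}$ by density. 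This is a genuine and worthwhile clarification of a step the paper leaves implicit, but the overall argument is the same as the paper's.
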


\begin{thm}
  Let $G=(A,\Phi)$ be a compact quantum group, $\alpha\in \Aut_{\chi}(G)$ and $N=(B,\Psi)$ be a normal subgroup of it, with $\rho:A\rightarrow B$ the corresponding surjection. Suppose that the Haar measure $h_N$ of $N$ is faithful. Then there exists a $\beta \in \Aut(N)$ such that \\
\centerline{
\xymatrix{
A \ar[d]^{\rho} \ar[r]^{\alpha} &A\ar[d]^{\rho}\\
B \ar[r]^{\beta} &B}
} \\ 
commutes.
\end{thm}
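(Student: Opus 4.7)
The plan is to reduce the statement directly to Proposition 4.4. Since Proposition 4.4 already provides the desired $\beta \in \Aut(N)$ whenever $\alpha$ restricts to a $c^\ast$-algebraic automorphism of $A_{G/N}$ (using the hypothesis that $h_N$ is faithful), the entire task reduces to verifying that $\alpha(A_{G/N}) = A_{G/N}$.

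First I will invoke the characterization of $A_{G/N}$ recorded just before Proposition 4.4: since $N$ is normal, Theorem 2.7(3) forces $(\tau_{|N}, 1_N) \in \{0, d_\tau\}$ for every irreducible representation $\tau$ of $G$, and the matrix entries $u_{ij}^\tau$ either all lie in $A_{G/N}$ (when $(\tau_{|N},1_N) = d_\tau$) or are all killed by $E_{G/N}$ (when $(\tau_{|N},1_N)=0$). So $A_{G/N}$ is the closed linear span of the matrix entries of those irreducible representations $\tau$ with $(\tau_{|N},1_N) = d_\tau$. Next I will exploit $\alpha \in \Aut_\chi(G)$: by Proposition 3.1(5), $\alpha(\tau) := ((\alpha(u^\tau_{ij})))$ is again an irreducible representation of $G$, and since $\chi_{\alpha(\tau)} = \alpha(\chi_\tau) = \chi_\tau$, it is equivalent to $\tau$ as a representation of $G$. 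Consequently $\alpha(\tau)_{|N}$ is equivalent to $\tau_{|N}$ as a representation of $N$, so $(\alpha(\tau)_{|N}, 1_N) = (\tau_{|N}, 1_N)$. Moreover, equivalent irreducible representations have matrix entries spanning the same linear subspace of $\mathcal{A}$ (the corresponding isotypic component under Peter--Weyl).

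Putting this together, for each irreducible $\tau$ with $(\tau_{|N}, 1_N) = d_\tau$, the elements $\alpha(u^\tau_{ij})$ lie in the linear span of $\{u^{\alpha(\tau)}_{kl}\}$, which coincides with the span of $\{u^\tau_{kl}\}$, and this span sits inside $A_{G/N}$ by the previous paragraph. Hence $\alpha(A_{G/N}) \subseteq A_{G/N}$. Applying the same argument to $\alpha^{-1} \in \Aut_\chi(G)$ gives the reverse inclusion, so $\alpha$ restricts to a $c^\ast$-algebraic automorphism of $A_{G/N}$. Proposition 4.4 then yields the required $\beta \in \Aut(N)$.

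The substantive step is the passage from ``$\alpha$ preserves characters'' to ``$\alpha$ preserves the linear span of matrix entries of each irreducible'' and the subsequent stability of the multiplicity $(\tau_{|N}, 1_N)$; I expect this to be the only real content, as the rest is a direct application of Proposition 4.4. Normality of $N$ is used only through Theorem 2.7(3), which ensures the clean ``$0$ or $d_\tau$'' dichotomy that makes $A_{G/N}$ a union of isotypic components in the first place.
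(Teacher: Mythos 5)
Your proposal is correct and takes essentially the same route as the paper: the paper's proof of this theorem is a one-line reduction to Proposition 4.4 together with the assertion that $\alpha\in\Aut_\chi(G)$ restricts to a $c^\ast$-algebraic automorphism of $A_{G/N}$, which is exactly the fact you verify (and you supply the character/isotypic-component argument that the paper leaves implicit).
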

 
\begin{proof}   
This follows from the previous proposition and from the fact that as $\alpha\in \Aut_{\chi}(G)$, $\alpha :A_{G/N}\to A_{G/N}$ is a $c^\ast$-algebraic automorphism.

\end{proof} 

\section{Inner Automorphisms and Normal Subgroups II} 
For this section, we assume that our compact quantum groups are maximal. We start this section by giving a recipe to produce representation class preserving automorphism. 

Let $G=(A,\Phi)$ be a compact quantum group with a non-trivial maximal classical compact subgroup. This subgroup, denoted by $X_G$, consists of the 1-dimensional $c^*-$algebraic representations of the c$^*-$algebra $A$ \cite{Wang-CMP95}. They form a group with product given by $$s_1 \cdot s_2=(s_1\otimes s_2)\Phi$$ and the unit is the counit of $G$, $\epsilon_G$. The surjective map is the obvious evaluation map from $A$,  $$\rho :A\rightarrow C(X_A)$$ $$a\mapsto e_a$$ $\mathrm{where}\;e_a(f)=f(a)$. The inverse may not be explicit, but the fact that $X_G$ is a group follows from Proposition 3.2 of \cite{Maes-VD}, from which it follows that closed sub-semigroups of compact groups, and more generally, of compact quantum groups, are in fact groups. In fact, a compact quantum sub-semi-group of a compact quantum group is itself a compact quantum group.

Let $\Gamma$ be a discrete group and $C^\ast(\Gamma)$ be the full group $c^\ast$-algebra of $\Gamma$. Then for this co-commutative compact quantum group, the maximal classical compact group is $C^\ast(\Gamma/[\Gamma,\Gamma])$, i.e. the full group $c^\ast$-algebra of the abelianisation of $\Gamma$. In case of $SU_{q}(2)$ with $-1<q<1$, the maximal classical compact group is $S^1$, the circle group \cite{Podles}, while in the case of $A_u(n)$, the maximal classical compact group is the unitary group $U(n)$ \cite{Wang-CMP95}.

First we define two automorphisms of $A$, which are just c$^*-$algebraic automorphisms, $$\lambda_s=(s^{-1}\otimes \id)\Phi\; \mathrm{and}\; \rho_s=(\id\otimes s)\Phi$$ Quite clearly $$\lambda_s\rho_t=\rho_t\lambda_s, \forall s,t\in X_G$$
We define $\alpha_s=\lambda_s\rho_s=\rho_s\lambda_S$. Then, $\alpha_s$ is a representation class preserving quantum group automorphism of $G$. Note that we are just inducing the inner automorphism $\alpha_s(x)=s^{-1}xs$ from $X_G$ to the compact quantum group $G$. This class of automorphisms were first defined by Wang in \cite{Wang-PLMS95}.

Now suppose that we have a compact quantum group $G=(A,\Phi)$ and a subgroup of it, $H=(B,\Psi)$. Suppose that the following is true, suppose for each $\tau\in \widehat{G}$ and for all matrix entries, $u_{ij}^{\tau}$, there exists a 1-dimensional $*$-representation $s^{\tau}_{ij} :A\rightarrow \mathbb{C}$ such that $s^{\tau}_{ij}(u^{\tau}_{ij})\neq 0$. We suppose also that for each induced inner automorphism $\alpha_s=\lambda_s \rho_s$ of $G$, there exists an automorphism $\beta :B\rightarrow B$ of $H$ such that \\
\centerline{
\xymatrix{
A \ar[d]^{\rho} \ar[r]^{\alpha_s} &A\ar[d]^{\rho}\\
B \ar[r]^{\beta} &B}
} \\ 
commutes. Then we claim that $H$ is normal. But in this case, $\alpha_s$ maps $A_{G/H}$ into $A_{G/H}$ for each $s\in X_G$. So if $H$ were not normal in $G$, then there would exist an irreducible representation $u^{\tau}=(u^{\tau}_{ij})$ of $G$ such that $0<l<d_{\tau}$, where $l=(u^{\tau}_{|H}, 1_H)$ denotes the multiplicity of the trivial representation of $H$ in the representation $u^{\tau}$ restricted to $H$. We may assume by unitary equivalence that the $l$ trivial representation appear in the upper left diagonal corner of $((\rho(u_{ij}^{\tau})))$. In that case, as has been shown in the proof of Proposition 2.1 of \cite{Wang-JFA}
$$E_{A/H}(u_{ij}^{\tau})= \begin{cases}
	u_{ij}^{\tau}  & \mbox{if } 1\leq i\leq d_{\tau}, 1\leq j\leq l \\
		0 & \mbox{if } 1\leq i\leq d_{\tau}, l<j
\end{cases} $$
 We consider the matrix element $u^{\tau}_{(l+1)1}$. Now, by hypothesis, there exists a $*$-homomorphism $$s^{\tau}_{(l+1)1}: A\rightarrow \mathbb{C}$$ such that $s_{(l+1)1}^{\tau}(u^{\tau}_{(l+1)1})\neq 0$. But 
$$\alpha_{s^{\tau}_{(l+1)1}}(u_{11})=\sum_{m,n} s_{(l+1)1}^{\tau}((u^{\tau}_{m1})^* u_{n1}^{\tau})u_{mn}$$
Now, $u_{(l+1)(l+1)} \not\in A_{G/H}$ as $E_{G/H}(u_{(l+1)(l+1)})=0$. But since $u_{11}\in A_{G/H}$  $\alpha_{s^{\tau}_{(l+1)1}}(u_{11})\in A_{G/H}$. However, $\alpha_{s^{\tau}_{(l+1)1}}(u_{11})$, when expressed in terms of $u^{\tau}_{ij}$, has as coefficient of $u_{(l+1)(l+1)}$, $|s^{\tau}_{(l+1)1}(u^{\tau}_{(l+1)1})|^2$, which is strictly positive. But as $E_{G/H}(u_{(l+1)(l+1)})=0$ and $u^{\tau}_{ij}$'s are linearly independent, this is not possible, and so, $H$ is normal.

So, we have the following 
\begin{pro}
  Let $G = (A, \Phi)$ be a compact quantum group such that for all $u_{ij} \in A$ such that $u_{ij}$ is a  matrix entry for some irreducible representation of $G$, there exists $S: A \to \mathbb{C}$, a one-dimensional representation of $A$ such that $S(u_{ij}) \neq 0$, then $G$ satisfies the following property: 

If $H = (B, \Psi)$ is a quantum subgroup of $G$ such that for any $\alpha \in \Aut_\chi(G)$, there exists $\beta \in \Aut(H)$ such that: 
\begin{center}
$\xymatrix{
A \ar[d]_{\rho} \ar[r]^{\alpha} & A \ar[d]^{\rho} \\
B \ar[r]_{\beta} & B }$
\end{center}
commutes, then $H$ is normal.  
\end{pro}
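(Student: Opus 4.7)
The plan is to exploit the induced inner automorphisms $\alpha_s = \lambda_s \rho_s$ described just above the statement: for any $s \in X_G$ these are automatically representation-class preserving and hence lie in $\Aut_\chi(G)$. By hypothesis, each such $\alpha_s$ admits a $\beta \in \Aut(H)$ making the square commute, so by Proposition 4.2 every $\alpha_s$ restricts to a $c^\ast$-algebraic automorphism of $A_{G/H}$ (and of $A_{H\backslash G}$). This converts the combinatorial hypothesis on $1$-dimensional representations into a rich supply of symmetries of $A_{G/H}$, which is exactly what will be used.

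I would then argue by contradiction. Suppose $H$ is not normal. By Theorem 2.7(3) there exists an irreducible representation $u^\tau = ((u^\tau_{ij}))$ of $G$ whose restriction to $H$ contains the trivial representation $1_H$ with multiplicity $l$ satisfying $0 < l < d_\tau$. After passing to an equivalent representation, I may assume the $l$ copies of $1_H$ sit in the upper-left block of $((\rho(u^\tau_{ij})))$. Then, as computed in the proof of Proposition 2.1 of \cite{Wang-JFA}, the conditional expectation $E_{G/H}$ sends $u^\tau_{ij}$ to itself when $j \le l$ and to $0$ when $j > l$. In particular $u^\tau_{11} \in A_{G/H}$ while $u^\tau_{(l+1)(l+1)} \notin A_{G/H}$.

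Now I invoke the main hypothesis to pick a $1$-dimensional representation $S : A \to \mathbb{C}$ with $S(u^\tau_{(l+1)1}) \neq 0$. Expanding $\alpha_S(u^\tau_{11}) = \lambda_S \rho_S(u^\tau_{11})$ via the comultiplication identity $\Phi(u^\tau_{ij}) = \sum_k u^\tau_{ik} \otimes u^\tau_{kj}$ applied twice (and using $s^{-1} = s \circ \kappa_G$ together with $\kappa_G(u^\tau_{ij}) = (u^\tau_{ji})^\ast$), one obtains an expression of the form $\sum_{m,n} S((u^\tau_{m1})^\ast u^\tau_{n1}) u^\tau_{mn}$. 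The coefficient of $u^\tau_{(l+1)(l+1)}$ is then $|S(u^\tau_{(l+1)1})|^2 > 0$. Since $u^\tau_{11} \in A_{G/H}$ and $\alpha_S$ preserves $A_{G/H}$, we must have $\alpha_S(u^\tau_{11}) \in A_{G/H}$; applying $E_{G/H}$ and using the linear independence of the $u^\tau_{ij}$'s forces the coefficient of $u^\tau_{(l+1)(l+1)}$ to vanish, a contradiction.

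The main obstacle I anticipate is the precise bookkeeping of the expansion of $\alpha_S(u^\tau_{11})$ in the $u^\tau_{ij}$ basis: one has to verify that the coefficient landing on the ``bad'' diagonal entry really is a modulus squared, so that strict positivity is assured independently of sign conventions for the antipode. Everything else is a direct combination of Proposition 4.2, Wang's criterion for normality, and the construction of $\alpha_s$ from $s \in X_G$.
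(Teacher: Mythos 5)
Your proposal is correct and follows essentially the same route as the paper: both restrict attention to the induced inner automorphisms $\alpha_s$, use Proposition 4.2 to get stability of $A_{G/H}$, and then derive a contradiction from the expansion $\alpha_S(u^\tau_{11})=\sum_{m,n}S((u^\tau_{m1})^\ast u^\tau_{n1})u^\tau_{mn}$, whose coefficient on $u^\tau_{(l+1)(l+1)}$ is $|S(u^\tau_{(l+1)1})|^2>0$ while $E_{G/H}(u^\tau_{(l+1)(l+1)})=0$. The bookkeeping you were worried about works out exactly as you anticipated, since $S$ is a $\ast$-homomorphism to $\mathbb{C}$ and $\kappa_G(u^\tau_{1m})=(u^\tau_{m1})^\ast$.
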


It is clear that for classical compact groups and for co-commutative quantum groups, the above condition holds, as in the first case, it's just the evaluation at an appropriate point and in the second, the counit suffices. Non-commutative and non-co-commutative examples exist as well (for example, by tensor products). 

\begin{cor}
Let $G=(A,\Phi)$ be a compact quantum group with a non-normal subgroup $H=(B,\Psi)$. Suppose that for each automorphism $\alpha\in \Aut_{\chi}(G)$ there exists an automorphism $\beta$ of $H$ such that the following diagram commutes- 
\begin{center}
$\xymatrix{
A \ar[d]_{\rho} \ar[r]^{\alpha} & A \ar[d]^{\rho} \\
B \ar[r]_{\beta} & B }$
\end{center}
Then there exists an irreducible representation $u^{\tau}$ of $G$ and some matrix entry of it,  $u^{\tau}_{ij}$, such that for any one dimensional $*$-representation of A, $S:A\to \mathbb{C}$, $S(u^{\tau}_{ij})=0$.
\end{cor}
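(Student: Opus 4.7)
The plan is to prove Corollary 5.2 as a direct contrapositive of Proposition 5.1, since the two statements are set up to dovetail exactly. The hypothesis of the corollary says $H$ is non-normal but has the lifting property for every $\alpha \in \Aut_\chi(G)$. Proposition 5.1 says that whenever $G$ has the ``separating one-dimensional representations'' property (every matrix coefficient $u^\tau_{ij}$ is nonzero on some character $S \colon A \to \mathbb{C}$), any such subgroup $H$ is forced to be normal. So the strategy is simply to negate the conclusion of the corollary and derive the hypothesis of Proposition 5.1.

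First I would suppose, for contradiction, that the conclusion of the corollary fails: that for every irreducible representation $u^\tau \in \widehat{G}$ and every matrix entry $u^\tau_{ij}$, one can find a one-dimensional $*$-representation $S \colon A \to \mathbb{C}$ with $S(u^\tau_{ij}) \neq 0$. This is precisely the hypothesis of Proposition 5.1, noting that every matrix coefficient of an irreducible representation of $G$ is automatically of the form $u^\tau_{ij}$ for some $\tau \in \widehat{G}$.

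Next I would invoke Proposition 5.1: combined with the standing assumption that every $\alpha \in \Aut_\chi(G)$ descends to an automorphism $\beta$ of $H$ making the square commute, the proposition forces $H$ to be a normal subgroup of $G$. But this contradicts the assumption of the corollary that $H$ is non-normal. Hence our negation was false, and the desired irreducible representation $u^\tau$ and matrix entry $u^\tau_{ij}$ annihilated by every one-dimensional $*$-representation of $A$ must exist.

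There is essentially no obstacle here beyond recognising the logical structure, since all the real work was done in Proposition 5.1 (the induced-inner-automorphism argument via $\alpha_s = \lambda_s \rho_s$ together with the coefficient analysis of $E_{G/H}$). The corollary is a one-line contrapositive and the proof amounts to quoting Proposition 5.1 correctly.
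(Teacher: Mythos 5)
Your proposal is correct and is exactly the paper's (implicit) argument: the corollary is stated without proof immediately after Proposition 5.1, being precisely its contrapositive, and your negation of the conclusion recovers the hypothesis of that proposition verbatim. Nothing further is needed.
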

  
We give two such examples, for which there exist non-normal subgroups stabilised by each representation class preserving automorphism: 

\subsection{The $SU_q(2)$ case}  

We show that such is the case for $SU_q(2)$, $-1<q<1,q\neq 0$. We note first that since $SU_q(2)$ has a unique irreducible representation class for a given dimension, any quantum group automorphism is in fact in $\Aut_{\chi}(SU_q(2))$, i.e. $\Aut(SU_q(2))=\Aut_{\chi}(SU_q(2))$. For $SU_q(2)$, the generating unitary representation, denoted by $u$, is the $2\times 2$ matrix \\
\[ \left( \begin{array}{ccc}
\alpha & -q\gamma^*  \\
\gamma & \alpha^*  \\
 \end{array} \right)\] \\
Now, let $\tau:SU_q(2)\rightarrow SU_q(2)$ be an automorphism of $SU_q(2)$. Then, since the irreducible representations $u$ and $\tau(u)$ are in the same representation class, we have for some $((\tau_{ij}))\in U(2)$ 
$$\tau(u)=(((\tau_{ij}))\otimes 1)u(((\overline{\tau_{ji}}))\otimes 1)$$ which tells us that 
$$\alpha\mapsto \tau_{11}\overline{\tau_{11}}\alpha - \tau_{11}\overline{\tau_{12}}q\gamma^*+\tau_{12}\overline{\tau_{11}}\gamma+\tau_{12}\overline{\tau_{12}}\alpha^*$$
$$-q\gamma^*\mapsto \tau_{11}\overline{\tau_{21}}\alpha-\tau_{11}\overline{\tau_{22}}q\gamma^*+\tau_{12}\overline{\tau_{21}}\gamma +\tau_{12}\overline{\tau_{22}}\alpha^*$$
$$\gamma\mapsto \tau_{21}\overline{\tau_{11}}\alpha- \tau_{21}\overline{\tau_{12}}q\gamma^*+\tau_{22}\overline{\tau_{11}}\gamma+\tau_{22}\overline{\tau_{12}}\alpha^*$$
$$\alpha^*\mapsto \tau_{21}\overline{\tau_{21}}\alpha-\tau_{21}\overline{\tau_{22}}q\gamma^* +\tau_{22}\overline{\tau_{21}}\gamma+\tau_{22}\overline{\tau_{22}}\alpha^*$$
Since $\tau$ is $*$-preserving and $\alpha, \gamma, -q\gamma^*, \alpha^*$ are linearly independent, we get, 
$$|\tau_{11}|^2=|\tau_{22}|^2$$
$$q^2 \overline{\tau_{21}}\tau_{12}=\overline{\tau_{21}}\tau_{12}$$
But as $0<q^2<1$, we have $\tau_{21}=\tau_{12}=0$ and $\tau_{11},\tau_{22}\in S^1$, the circle group. So,  
$$\alpha\mapsto \alpha$$
$$\gamma\mapsto \tau_{22}\overline{\tau_{11}}\gamma$$
But $SU_q(2)$ is the universal c$^*$-algebra generated $\alpha,\gamma$ such that 
$$\alpha^*\alpha+\gamma^*\gamma=1,\; \alpha\alpha^*+q^2\gamma\gamma^*=1$$
$$\alpha\gamma=q\gamma\alpha,\; \alpha\gamma^*=q\gamma^*\alpha,\; \gamma\gamma^*=\gamma^*\gamma$$
We see that $\alpha$ and $\tau_{22}\overline{\tau_{11}}\gamma$ also satisfy these relations, which implies that indeed we have an automorphism 
$$\tau:SU_q(2)\rightarrow SU_q(2)$$
$$\alpha\mapsto \alpha$$
$$\gamma\mapsto \tau_{22}\overline\tau_{11}\gamma$$
where $\tau_{11},\tau_{22}\in S^1$, the circle group. It is easily checked that this automorphism is also a quantum group automorphism by verifying the relation on the generators.

However, since $\tau_{22} \overline{\tau_{11}} \in S^1$, there exists some $\kappa \in S^1$ such that $\overline{\kappa^2}=\tau_{22} \overline{\tau_{11}}$ and so, by \cite{Wang-PLMS95}, we get that the same automorphism is induced by the matrix 
\[
\begin{bmatrix}
\kappa & 0 \\
0 & \overline{\kappa}
\end{bmatrix} 
\]
and this comes from the induced automorphism of the maximal compact group $S^1$ of $SU_q(2)$. But, for these automorphisms, as they are induced from the quantum subgroup $S^1$ of $SU_q(2)$, we then have the following: 

\begin{thm}
  For any $\alpha \in \Aut(SU_q(2)) = \Aut_\chi(SU_q(2))$, with $0<q^2<1$, we have the following commutative diagram- \\
\centerline{
\xymatrix{
SU_q(2) \ar[r]^{\alpha} \ar[d]_{\rho}&SU_q(2) \ar[d]^{\rho} \\
C(S^1) \ar[r]_{id}& C(S^1)  
} }
\end{thm}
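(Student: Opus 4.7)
The plan is to read this statement off directly from the explicit description of $\Aut(SU_q(2))$ obtained in the paragraphs just above. To avoid a notational clash with the generator $\alpha$ of the underlying $c^\ast$-algebra $A$ of $SU_q(2)$, let me denote a typical element of $\Aut(SU_q(2))$ by $\tau$.

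First I would record what the surjection $\rho : A \to C(S^1)$ does on the generators. The maximal classical subgroup $S^1 = X_{SU_q(2)}$ consists precisely of the one-dimensional $\ast$-representations of $A$, given explicitly by $z(\alpha) = z$ and $z(\gamma) = 0$ for $z \in S^1$. Hence the evaluation map $\rho$ sends $\alpha$ to the identity function $z \mapsto z$ on $S^1$ and sends $\gamma$ to $0$.

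Next I would invoke the structural result just computed: any $\tau \in \Aut(SU_q(2))$ has the form $\tau(\alpha) = \alpha$ and $\tau(\gamma) = \mu\,\gamma$ for some $\mu \in S^1$. Applying $\rho$ to each generator then gives
\begin{align*}
\rho(\tau(\alpha)) &= \rho(\alpha), \\
\rho(\tau(\gamma)) &= \mu\,\rho(\gamma) \;=\; 0 \;=\; \rho(\gamma).
\end{align*}
Since $\alpha$ and $\gamma$ generate $A$ as a $c^\ast$-algebra and both $\rho$ and $\tau$ are $\ast$-homomorphisms, these two equalities propagate to all of $A$, yielding $\rho \circ \tau = \rho$. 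This is precisely the commutativity of the square with $\id_{C(S^1)}$ along the bottom row.

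A more conceptual alternative (also essentially present in the discussion preceding the theorem) is to observe that $\tau$ coincides with the induced inner automorphism $\alpha_\kappa = \lambda_\kappa \rho_\kappa$ for some $\kappa \in S^1 = X_{SU_q(2)}$. Pushing $\alpha_\kappa$ down through $\rho$ must produce conjugation by $\kappa$ inside the classical subgroup $S^1$, and since $S^1$ is abelian this conjugation is trivial; hence the induced automorphism on $C(S^1)$ is the identity. No serious obstacle arises: the theorem is a direct corollary of the structural classification of $\Aut(SU_q(2))$ just carried out, together with the abelianness of the maximal classical subgroup.
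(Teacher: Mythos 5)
Your proposal is correct and follows essentially the same route as the paper: the theorem is read off from the classification $\tau(\alpha)=\alpha$, $\tau(\gamma)=\mu\gamma$ (with $\mu\in S^1$) established immediately beforehand, and the paper's implicit proof is exactly your ``conceptual alternative,'' namely that every such $\tau$ is the induced inner automorphism $\alpha_\kappa$ coming from the abelian maximal classical subgroup $S^1$, so it descends to the identity on $C(S^1)$. Your primary argument --- verifying $\rho\circ\tau=\rho$ directly on the generators using $\rho(\gamma)=0$ and $\rho(\tau(\alpha))=\rho(\alpha)$ --- is a slightly more self-contained way to finish the same proof, not a different method.
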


\subsection{The $A_u(n)$ case}
In case of $A_u(n)$, we have a surjective homomorphism 
$$\phi :U(n)\to \Aut_{\chi}(A_u(n))$$
$$t\mapsto \phi_t$$
where $\phi_t :A_u(n)\to A_u(n)$ is defined by the property that 
$$\phi_t(u)=(t\otimes 1)u(t^\ast\otimes 1)$$
where $u=((u_{ij}))\in M_n(A_u(n))$ is the fundamental unitary. This, by the universal property of $A_u(n)$, extends to an automorphism of $A_u(n)$. Surjectivity of $\phi$ follows as for any $\alpha \in \Aut_{\chi}(A_u(n))$, there exists some $t\in U(n)$ such that 
$$\alpha(u)=(t\otimes 1)u(t^\ast\otimes 1)\Rightarrow \alpha=\phi_t$$ 
\begin{thm}
For any $t\in U(n)$, the following diagram commutes-\\
\centerline{
\xymatrix{
A_u(n) \ar[r]^{\phi_t} \ar[d]_{\rho}&A_u(n) \ar[d]^{\rho} \\
C(U(n)) \ar[r]_{\psi_t}& C(U(n))
}}
where $\rho$ denotes the canonical surjection onto $C(U(n))$, the algebra of complex-valued complex functions of the group $U(n)$, which is the maximal compact subgroup of $A_u(n)$ and $\psi_t$ denotes the automorphism of $C(U(n))$, induced by the inner automorphism 
$$\beta_t:U(n)\to U(n),\;\; s\mapsto tst^{\ast}$$   
\end{thm}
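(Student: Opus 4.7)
The plan is to verify commutativity of the square by checking it on algebra generators. Since $A_u(n)$ is the universal unital $C^*$-algebra generated by the matrix entries $\{u_{ij}\}_{1 \le i,j \le n}$ of the fundamental unitary $u$, and both $\rho \circ \phi_t$ and $\psi_t \circ \rho$ are $*$-homomorphisms $A_u(n) \to C(U(n))$, it suffices to show they agree on each $u_{ij}$.

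First I would unfold the left-hand side on $u_{ij}$. The defining relation $\phi_t(u) = (t \otimes 1)\, u\, (t^* \otimes 1)$ in $M_n(A_u(n))$ gives, at the level of entries, $\phi_t(u_{ij}) = \sum_{k,l} t_{ik}\, \overline{t_{jl}}\, u_{kl}$. Since $\rho$ sends $u_{kl}$ to the coordinate function $s \mapsto s_{kl}$ on $U(n)$, I would conclude
\[
\bigl(\rho \circ \phi_t\bigr)(u_{ij})(s) \;=\; \sum_{k,l} t_{ik}\, \overline{t_{jl}}\, s_{kl} \;=\; (t s t^*)_{ij}
\]
for every $s \in U(n)$. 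For the right-hand side, $\psi_t$ is the pullback by $\beta_t$, i.e.\ $\psi_t(f) = f \circ \beta_t$ for $f \in C(U(n))$, hence
\[
\bigl(\psi_t \circ \rho\bigr)(u_{ij})(s) \;=\; \rho(u_{ij})(t s t^*) \;=\; (t s t^*)_{ij}.
\]
The two expressions coincide on every $u_{ij}$, and since both composites are $*$-homomorphisms, equality extends to all of $A_u(n)$, which proves the commutativity of the diagram.

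There is no deeper obstacle here; the content is the index book-keeping required to recognise $\sum_{k,l} t_{ik} \overline{t_{jl}} s_{kl}$ as the $(i,j)$-entry of $tst^*$, together with the consistent choice of convention (pullback by $\beta_t$ rather than $\beta_{t^{-1}}$) ensuring that the quantum-inner automorphism $\phi_t$ of $A_u(n)$ descends under $\rho$ to the classical conjugation by the same element $t$ of the maximal classical subgroup $U(n)$.
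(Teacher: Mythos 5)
Your proposal is correct and follows exactly the route the paper takes: the paper's proof consists of the single remark that the identity is ``easily shown by checking the relation on $u_{ij}$'s,'' and your computation is precisely that verification, carried out explicitly on the generators and extended by the universal property. The only content you add beyond the paper is the worked-out index bookkeeping and the explicit fixing of the pullback convention $\psi_t(f)=f\circ\beta_t$, both of which are fine.
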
 
\begin{proof} This is easily shown by checking the relation on $u_{ij}$'s, the matrix entries of $u$, the fundamental unitary of $A_u(n)$. \end{proof}

We have the following corollary, which follows easily as the diagram of the previous proposition commutes.

\begin{cor} For the homomorphism $\phi:U(n)\to \Aut_{\chi}(A_u(n))$, we have $$\mathrm{ker}\:\phi=\mathrm{Z}(U(n))=\{\lambda I\;:\;\lambda\in S^1\}$$ Also, each non-trivial $\phi_t$ is a $c^\ast$-algebraic outer automorphism of $A_u(n)$. \end{cor}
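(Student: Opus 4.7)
The plan is to deduce both assertions from the commutative square of the preceding theorem, which records $\rho \circ \phi_t = \psi_t \circ \rho$, where $\psi_t$ is the pullback to $C(U(n))$ of the inner automorphism $s \mapsto tst^{\ast}$ of $U(n)$. The essential trick is that the classical quotient $C(U(n))$ is commutative, and this rigidly constrains any $\phi_t$ that is either trivial or implemented by a unitary.

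For the kernel computation, if $t = \lambda I$ with $\lambda \in S^1$, then
$$(t \otimes 1)\,u\,(t^{\ast}\otimes 1) = \lambda\bar\lambda\, u = u,$$
so $\phi_t$ fixes every generator $u_{ij}$ of $A_u(n)$ and is the identity; this gives $Z(U(n)) \subseteq \ker \phi$. Conversely, if $\phi_t = \id$, then $\psi_t \circ \rho = \rho \circ \phi_t = \rho$, and surjectivity of $\rho$ forces $\psi_t = \id_{C(U(n))}$. Since $\psi_t$ is the pullback of $s \mapsto tst^{\ast}$, this means $tst^{\ast} = s$ for every $s \in U(n)$, i.e. $t \in Z(U(n))$.

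For the outer-ness assertion, I would argue by contradiction. Suppose some non-trivial $\phi_t$ is $c^{\ast}$-algebraically inner, say $\phi_t(a) = v a v^{\ast}$ for a unitary $v \in A_u(n)$. Applying $\rho$ and using the commutative diagram, for every $a \in A_u(n)$ we have
$$\psi_t(\rho(a)) = \rho(\phi_t(a)) = \rho(v)\,\rho(a)\,\rho(v)^{\ast},$$
so $\psi_t$ is implemented on $C(U(n))$ by the unitary $\rho(v)$. But conjugation by a unitary in a commutative $c^{\ast}$-algebra is the identity, so $\psi_t = \id$. By the first part this forces $t \in Z(U(n))$ and hence $\phi_t = \id$, contradicting the non-triviality of $\phi_t$.

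The main obstacle is largely cosmetic rather than substantive: one must be careful about the precise meaning of ``inner $c^{\ast}$-algebraic automorphism'' of the unital algebra $A_u(n)$ (namely, implementation by a unitary in the algebra itself), but once this convention is fixed the argument above is clean. One could alternatively compute $\ker\phi$ directly inside $M_n(A_u(n))$ by expanding $tu = ut$ coordinatewise and using the linear independence of the $u_{ij}$'s, but passing through the abelian quotient $C(U(n))$ trivialises both parts simultaneously and is the natural route given the commutative square already established.
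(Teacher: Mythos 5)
Your proof is correct and takes essentially the same route as the paper, which merely asserts that the corollary ``follows easily as the diagram of the previous proposition commutes''; your argument is exactly the intended one, pushing both the kernel computation and the innerness question down to the commutative quotient $C(U(n))$ via $\rho\circ\phi_t=\psi_t\circ\rho$. The only addition you make is to spell out the details (surjectivity of $\rho$, triviality of unitary conjugation in a commutative $c^\ast$-algebra) that the paper leaves implicit.
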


We now want to show that $U(n)$ is not normal in $A_u(n)$.

\begin{lem}
  Let $G$ be a compact quantum group with subgroups $N_1$ and $N_2$. Suppose that $N_2$ is a subgroup of $N_1$ and that $N_2$ is normal in $G$. Then, $N_2$ is normal in $N_1$. 
\end{lem}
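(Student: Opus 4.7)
The plan is to use Definition~2.6 directly: I want to prove $B_{1,N_1/N_2}=B_{1,N_2\backslash N_1}$ inside $B_1$, knowing that the analogous equality $A_{G/N_2}=A_{N_2\backslash G}$ holds inside $A$. Writing $\rho_1\colon A\to B_1$, $\rho_2\colon A\to B_2$ and $\sigma\colon B_1\to B_2$ for the three surjections (so that $\rho_2=\sigma\circ\rho_1$ and $(\rho_1\otimes\rho_1)\Phi=\Psi_1\circ\rho_1$), the whole argument rests on the claim that $\rho_1$ carries each of the two coset subalgebras of $N_2$ in $G$ onto the corresponding coset subalgebra of $N_2$ in $N_1$.

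First I would establish $\rho_1(A_{G/N_2})=B_{1,N_1/N_2}$. The inclusion $\subseteq$ is a one-line chase: if $(\id\otimes\rho_2)\Phi(a)=a\otimes 1$, then rewriting $\rho_1\otimes\rho_2=(\id\otimes\sigma)(\rho_1\otimes\rho_1)=(\id\otimes\sigma)\Psi_1\rho_1$ and applying $\rho_1$ to the first factor yields $(\id\otimes\sigma)\Psi_1(\rho_1(a))=\rho_1(a)\otimes 1$. For the reverse inclusion, given $b\in B_{1,N_1/N_2}$ I would lift it to some $a\in A$ with $\rho_1(a)=b$ and then average using the conditional expectation $E_{G/N_2}=(\id\otimes h_{N_2}\circ\rho_2)\Phi$, which lands in $A_{G/N_2}$ by coassociativity of $\Phi$ together with the Haar identity $(\id\otimes h_{N_2})\Psi_2=h_{N_2}(\cdot)\cdot 1$. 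Substituting $\rho_2=\sigma\circ\rho_1$ and using $(\rho_1\otimes\rho_1)\Phi=\Psi_1\rho_1$ collapses $\rho_1(E_{G/N_2}(a))$ to $(\id\otimes h_{N_2}\circ\sigma)\Psi_1(b)=E_{N_1/N_2}(b)$, and the latter equals $b$ because $b$ is already fixed by $E_{N_1/N_2}$. The identical argument on the opposite side gives $\rho_1(A_{N_2\backslash G})=B_{1,N_2\backslash N_1}$.

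Once these two surjection-of-coset-spaces identifications are in hand, applying $\rho_1$ to the equality $A_{G/N_2}=A_{N_2\backslash G}$ (which is the normality of $N_2$ in $G$) immediately yields $B_{1,N_1/N_2}=B_{1,N_2\backslash N_1}$, i.e.\ the normality of $N_2$ in $N_1$. The only genuinely non-trivial step is the reverse inclusion described above, where the conditional expectation is used to ``push'' an arbitrary lift of $b$ into $A_{G/N_2}$; everything else is definitional manipulation, and no deeper tool (for instance the representation-theoretic criterion in Theorem~2.7) is needed.
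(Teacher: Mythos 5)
Your argument is correct. Both inclusions check out: the forward inclusion $\rho_1(A_{G/N_2})\subseteq B_{1,N_1/N_2}$ is the factorization $(\rho_1\otimes\rho_2)\Phi=(\id\otimes\sigma)\Psi_1\rho_1$ applied to $(\id\otimes\rho_2)\Phi(a)=a\otimes 1$, and the reverse inclusion via $\rho_1(E_{G/N_2}(a))=E_{N_1/N_2}(\rho_1(a))=b$ is exactly the right use of the conditional expectation (the computation $(\rho_1\otimes h_{N_2}\sigma\rho_1)\Phi=(\id\otimes h_{N_2}\sigma)\Psi_1\rho_1$ is valid, and $E_{G/N_2}(a)\in A_{G/N_2}$ by coassociativity and invariance of $h_{N_2}$). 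Applying $\rho_1$ to $A_{G/N_2}=A_{N_2\backslash G}$ then gives normality of $N_2$ in $N_1$ directly from Definition~2.5. The paper itself gives no argument here --- it simply defers to Lemma~2.1 of Fima and Theorem~5.17 of Cirio--D'Andrea--Pinzari--Rossi --- so your proof supplies a self-contained verification of what the paper outsources; it is also essentially the same mechanism (transporting coset algebras along the intermediate surjection) that the paper uses explicitly in the proof of Lemma~5.7, where $\rho_1(A_{G/H})=B_{N/H}$ is established by the same two-step argument. The one thing worth making explicit is the standing compatibility assumption $\rho_2=\sigma\circ\rho_1$ relating the three surjections; it is the intended reading of the hypotheses and is what makes the whole computation go through.
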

\begin{proof}
  This follows easily from Lemma 2.1 of \cite{Fima}. See also the proof of Theorem 5.17 of \cite{Pinz}.
\end{proof}

\begin{lem}
Let $G=(A,\Phi)$, $N=(B,\Psi)$ and $H=(C,\Delta)$ be compact quantum groups such that $H$ is a subgroup of $N$ and $N$ is a subgroup of $G$, so $H$ is also a subgroup of $G$. Suppose further that $N$ and $H$ are normal in $G$. Then the compact quantum group $G/H$ has $N/H$ as a normal subgroup with the quotient being $G/N$.
\end{lem}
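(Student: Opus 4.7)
My plan is to first set up the relevant surjections and then verify the claim in three steps: (i) exhibit $N/H$ as a quantum subgroup of $G/H$, (ii) verify that it is normal, (iii) identify the quotient with $G/N$.

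Let $\rho_N:A\to B$ and $\rho_H:A\to C$ be the surjections witnessing $N$ and $H$ as subgroups of $G$, and let $\sigma:B\to C$ be the surjection witnessing $H$ as a subgroup of $N$; the compatibility relation $\rho_H=\sigma\circ\rho_N$ comes from the subgroup relation $H\subseteq N\subseteq G$. By the previous lemma, $H$ is normal in $N$, so $N/H=(B_{N/H},\Psi_{|B_{N/H}})$ is a well-defined compact quantum group, and similarly for $G/H$. The first step is to check that $\rho_N$ restricts to a surjection $\bar\rho_N:A_{G/H}\to B_{N/H}$. Containment is a direct computation: for $a\in A_{G/H}$,
\[(\id\otimes\sigma)\Psi(\rho_N(a))=(\rho_N\otimes\sigma\circ\rho_N)\Phi(a)=(\rho_N\otimes\rho_H)\Phi(a)=(\rho_N\otimes\id)(a\otimes 1)=\rho_N(a)\otimes 1,\]
so $\rho_N(a)\in B_{N/H}$. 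Passing $\rho_N$ through the comultiplication also verifies that $\bar\rho_N$ intertwines the restricted coproducts.

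Surjectivity of $\bar\rho_N$ is the main obstacle, and is handled by a conditional expectation argument. Given $b\in B_{N/H}$, pick $a\in A$ with $\rho_N(a)=b$ and set $a'=E_{G/H}(a)\in A_{G/H}$. Using $\rho_H=\sigma\circ\rho_N$, the fact that $\rho_N$ absorbs scalar functionals, and the identity $(\rho_N\otimes\rho_N)\Phi=\Psi\circ\rho_N$, one computes
\[\rho_N(E_{G/H}(a))=(\rho_N\otimes h_H\circ\rho_H)\Phi(a)=(\id\otimes h_H\circ\sigma)\Psi(\rho_N(a))=E_{N/H}(b)=b,\]
where the last equality uses $b\in B_{N/H}$. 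Hence $\bar\rho_N$ is surjective, and $N/H$ is a quantum subgroup of $G/H$.

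For normality, I would use the definitional characterisation via coset spaces. The key observation is that since $H\subseteq N$, we have $A_{G/N}\subseteq A_{G/H}$ (apply $\id\otimes\sigma$ to the defining identity) and analogously $A_{N\backslash G}\subseteq A_{H\backslash G}$. A short calculation using $\rho_H=\sigma\circ\rho_N$ gives
\[(A_{G/H})_{(G/H)/(N/H)}=\{a\in A_{G/H}:(\id\otimes\bar\rho_N)\Phi(a)=a\otimes 1\}=A_{G/H}\cap A_{G/N}=A_{G/N},\]
and the symmetric computation yields $(A_{G/H})_{(N/H)\backslash(G/H)}=A_{N\backslash G}$. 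Normality of $N$ in $G$ gives $A_{G/N}=A_{N\backslash G}$, so the two coset spaces inside $A_{G/H}$ coincide, proving that $N/H$ is normal in $G/H$. Finally, the quotient quantum group $(G/H)/(N/H)$ has underlying $c^\ast$-algebra $A_{G/N}$ with comultiplication obtained by restricting $\Phi_{|A_{G/H}}$, which is nothing but $\Phi_{|A_{G/N}}$; this is precisely $G/N$. The only subtlety I anticipate is making sure the compatibility $\rho_H=\sigma\circ\rho_N$ is invoked at each step, but this follows directly from transitivity of the subgroup relation.
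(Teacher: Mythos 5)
Your proposal is correct and follows essentially the same route as the paper: the same three compatible surjections, the same direct computation showing $\rho_N(A_{G/H})\subseteq B_{N/H}$, and the same conditional-expectation argument via $E_{G/H}$ for surjectivity. The only difference is that you spell out the normality and quotient identification (computing the coset spaces inside $A_{G/H}$ to be $A_{G/N}$ and $A_{N\backslash G}$), which the paper dismisses as ``easily seen''; your added detail is accurate.
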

\begin{proof}
It follows from the previous lemma that $H$ is normal in $N$. We have the three corresponding surjections 
$$\rho_0 :A\to C$$
$$\rho_1 :A\to B$$
$$\rho_2 :B\to C$$
such that $\rho_0=\rho_2\circ \rho_1$. 

Now, $\rho_1(A_{G/H})=B_{N/H}$ as if $a\in A_{G/H}$ then
\begin{align*}
(\id \otimes\rho_2)\Psi(\rho_1(a)) &=(\rho_1\otimes\rho_2\circ\rho_1)\Phi(a)\\
&=( \rho_1\otimes \id)(\id\otimes \rho_0)\Phi(a)\\
&= \rho_1(a)\otimes 1
\end{align*}
and so $\rho_1(A_{G/H})\subseteq B_{N/H}$. But then, if $b\in B_{N/H}$, and if $x\in A$ is such that $\rho_1(x)=b$, then for $E_{G/H}(x)=(\id\otimes h_H\circ\rho_0)\Phi(a)$, we have
\begin{align*} 
\rho_1(E_{G/H}(x))&=(\id\otimes h_H\circ\rho_2)(\rho_1\otimes\rho_1)\Phi(x)\\
&=( \id\otimes h_H\circ\rho_2)\Psi(\rho_1(x))=b
\end{align*}
So, we have that $N/H$ is indeed a subgroup of $G/H$ with the surjection being the map $\rho_1$ restricted to $A_{G/H}$. As $A_{G/N}\subseteq A_{G/H}$, it is easily seen that $N/H$ is normal in $G/H$, and that $(G/H)/(N/H)=G/N$.
\end{proof}
\begin{thm}
The subgroup $C(U(n))$ is not normal in $A_u(n)$.\end{thm}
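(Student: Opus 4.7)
The plan is to exhibit a single irreducible representation of $A_u(n)$ whose restriction to $U(n)$ violates the multiplicity criterion (3) of Theorem 2.7, and thereby conclude non-normality directly. I will locate such a representation inside the tensor product $u^{\otimes 2}\otimes \bar u^{\otimes 2}$ by comparing the dimensions of the $A_u(n)$- and $U(n)$-invariants it contains.

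First, using Banica's classification \cite{Ban96} of the irreducibles of $A_u(n)$ by words in the free monoid on $\{u,\bar u\}$, and his description of the intertwiners by non-crossing pair partitions matching opposite letters, I would note that $u\otimes u$ and $\bar u\otimes \bar u$ are each irreducible of dimension $n^2$. The fusion rule for their product then yields the $A_u(n)$-decomposition
\[
u^{\otimes 2}\otimes \bar u^{\otimes 2}\;\cong\; \mathbf{1}\,\oplus\, \pi_{\mathrm{ad}}\,\oplus\, \pi,
\]
where $\pi_{\mathrm{ad}}$ is the irreducible summand of $u\otimes \bar u$ of dimension $n^2-1$ and $\pi$ is a new irreducible of dimension $n^4-n^2$; in particular, the space of $A_u(n)$-invariants in $u^{\otimes 2}\otimes \bar u^{\otimes 2}$ is only $1$-dimensional, corresponding to the unique non-crossing $u$-to-$\bar u$ pair partition of the word $uu\bar u\bar u$.

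Next, classical Schur--Weyl duality gives $\dim (u^{\otimes 2}\otimes \bar u^{\otimes 2})^{U(n)} = 2! = 2$. The trivial $A_u(n)$-summand accounts for one of these $U(n)$-invariants, and $\pi_{\mathrm{ad}}|_{U(n)}$ is the adjoint representation of $U(n)$ (irreducible, dimension $n^2-1$), which contributes none. The second $U(n)$-invariant must therefore sit in $\pi|_{U(n)}$, forcing
\[
(\pi|_{U(n)},\,1_{U(n)}) = 1.
\]
Since $1$ is neither $0$ nor $d_\pi = n^2(n^2-1)$, condition (3) of Theorem 2.7 is violated, so $C(U(n))$ is not normal in $A_u(n)$.

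The hardest step is the fusion computation for $(u\otimes u)\otimes(\bar u\otimes \bar u)$, which requires some care but follows routinely from the non-crossing-pair-partition description of intertwiners for $A_u(n)$; everything else is Schur--Weyl plus bookkeeping. The conceptual heart of the argument is the strict inequality $2>1$ between the ``classical'' and ``free'' invariant dimensions in bi-degree $(2,2)$, which is exactly where the inclusion $U(n)\hookrightarrow A_u(n)$ first fails to see all of the representation theory of $A_u(n)$.
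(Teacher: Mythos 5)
Your argument is correct, but it is a genuinely different proof from the one in the paper. The paper's proof is indirect: it uses the fact that $C(S^1)$ is central (hence normal) both in $A_u(n)$ and in $C(U(n))$, applies Lemma 5.10 to conclude that normality of $C(U(n))$ in $A_u(n)$ would make $U(n)/S^1$ a proper nontrivial normal subgroup of $A_u(n)/S^1$, and then invokes Chirvasitu's simplicity theorem (Theorem 1 of \cite{Chiv}) to get a contradiction. You instead verify the failure of Wang's multiplicity criterion (Theorem 2.7(3)) directly on a single irreducible: by Banica's fusion rules \cite{Ban96} one has $r_{\alpha\alpha}\otimes r_{\beta\beta}=1\oplus r_{\alpha\beta}\oplus r_{\alpha\alpha\beta\beta}$, so $u^{\otimes 2}\otimes\bar u^{\otimes 2}$ has a one-dimensional space of $A_u(n)$-invariants, while Schur--Weyl gives a two-dimensional space of $U(n)$-invariants; since the trivial summand absorbs one and $r_{\alpha\beta}|_{U(n)}$ is the adjoint representation and absorbs none, the remaining invariant must lie in $\pi=r_{\alpha\alpha\beta\beta}$, giving $(\pi|_{U(n)},1_{U(n)})=1\notin\{0,\,n^4-n^2\}$ for $n\geq 2$. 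Your computation checks out (the crossing pairing $\{(1,3),(2,4)\}$ of $uu\bar u\bar u$ is exactly the extra classical invariant that the free quantum group does not see), and it has the virtue of being self-contained and quantitative --- it exhibits an explicit witness to non-normality and pinpoints where the inclusion first fails, at bidegree $(2,2)$ --- whereas the paper's route is shorter but leans on the much deeper simplicity result for $PA_u(n)$. The only caveat is that you should state the standing hypothesis $n\geq 2$ explicitly, since for $n=1$ the subgroup is all of $A_u(1)$ and both Schur--Weyl duality in degree $2$ and the inequality $d_\pi>1$ fail.
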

\begin{proof} Since $C(S^1)$ is normal in $A_u(n)$, as is shown in Proposition 4.5 of \cite{Wang-JFA}, and also in $C(U(n))$, we have using the previous lemma and Theorem 1 of \cite{Chiv} that $C(U(n))$ is not normal in $A_u(n)$ \end{proof}

We end this section by computing the group $$\mathrm{Out}_{\chi}(A_u(n))=\Aut(A_u(n))/\Aut_{\chi}(A_u(n))$$
\begin{pro} $Aut(A_u(n))/Aut_{\chi}(A_u(n))=\mathbb{Z}_2$ \end{pro}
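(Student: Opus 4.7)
The plan is to exhibit one non-trivial involutive automorphism lying outside $\Aut_{\chi}(A_u(n))$ and then to verify that every automorphism of $A_u(n)$ represents either the trivial or this non-trivial coset, yielding $\Out_{\chi}(A_u(n))\cong\mathbb{Z}_2$.

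First I would construct an automorphism $\pi:A_u(n)\to A_u(n)$ determined by $\pi(u_{ij})=u_{ij}^{*}$. Since the defining relations of $A_u(n)$ require both $u$ and $\bar u$ to be unitary matrices, these relations are symmetric under $u\leftrightarrow \bar u$; universality therefore gives a $*$-homomorphism sending each generator $u_{ij}$ to $u_{ij}^{*}$, which clearly squares to the identity and so is an automorphism. The identity
\[(\pi\otimes\pi)\Phi(u_{ij})=\sum_{k}u_{ik}^{*}\otimes u_{kj}^{*}=\Phi(u_{ij}^{*})=\Phi(\pi(u_{ij}))\]
shows that $\pi$ commutes with the coproduct, hence $\pi\in\Aut(A_u(n))$. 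Since $\pi(\chi_u)=\chi_u^{*}=\chi_{\bar u}\neq \chi_u$ (as $u$ and $\bar u$ are inequivalent), we have $\pi\notin\Aut_{\chi}(A_u(n))$, so $[\pi]$ is a nontrivial element of $\Out_{\chi}(A_u(n))$ of order two.

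Next I would show that any $\gamma\in\Aut(A_u(n))$ represents either $[\id]$ or $[\pi]$. By Proposition 3.1(5), $\gamma(u)$ is an irreducible representation of $A_u(n)$ of dimension $n$. The key representation-theoretic input is Banica's classification of the irreducibles of $A_u(n)$: they are indexed by words in the free monoid on two generators corresponding to $u$ and $\bar u$, and the only $n$-dimensional irreducibles are $u$ and $\bar u$ themselves. Hence $\gamma(u)\sim u$ or $\gamma(u)\sim \bar u$. In the first case, the surjectivity argument for $\phi:U(n)\to \Aut_{\chi}(A_u(n))$ given in the discussion preceding the statement yields $\gamma=\phi_t$ for some $t\in U(n)$, so $\gamma\in\Aut_{\chi}(A_u(n))$. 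In the second case, a direct computation using the $\mathbb{C}$-linearity of $\pi$ shows that $\pi(\gamma(u))\sim u$ as matrices in $M_n(A_u(n))$, whence $\pi\circ\gamma\in\Aut_{\chi}(A_u(n))$ by the first case, and therefore $[\gamma]=[\pi]$.

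Combining, $\Out_{\chi}(A_u(n))=\{[\id],[\pi]\}\cong\mathbb{Z}_2$. The main obstacle is the representation-theoretic fact that $u$ and $\bar u$ are the only $n$-dimensional irreducible representations of $A_u(n)$; once this is granted, the rest is a routine application of universality together with the description of $\Aut_{\chi}(A_u(n))$ as conjugations by scalar unitaries established earlier in Section 5.
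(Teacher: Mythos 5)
Your proposal is correct and follows essentially the same route as the paper: both arguments rest on the fact that $u$ and $\bar{u}$ are the only $n$-dimensional irreducible representations of $A_u(n)$, exhibit the conjugation automorphism $u\mapsto\bar{u}$ as the nontrivial coset representative, and observe that any automorphism sending $u$ to a conjugate of $\bar{u}$ factors as this conjugation automorphism composed with an element of $\Aut_{\chi}(A_u(n))$. Your write-up merely makes explicit some steps (universality, the character computation) that the paper leaves implicit.
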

\begin{proof} Since $A_u(n)$ has exactly 2 irreducible representation of dimension $n$, and the automorphism $\gamma$ defined by 
$$u=(u_{ij})\mapsto \overline{u}= (u^*_{ij})$$
is a quantum group automorphism, the proposition follows from the fact that any automorphism not in $Aut_{\chi}(A_u(n))$ will map 
$$u\mapsto (t\otimes 1)\overline{u}(t^*\otimes 1), t\in U(n)$$,
which is composition of two quantum group automorphisms 
$$u\xrightarrow{\gamma} \overline{u}\xrightarrow{\gamma_t} t\overline{u}t^*$$
But $\gamma_t$ is in $Aut_{\chi}(A_u(n))$ and hence the result follows. 
\end{proof}

\section{Central Subgroup and Center} 
In case of compact groups, we have that $\Inn(G) = G/Z(G)$, where $Z(G)$ denotes the center of $G$. In this section, we try to identify the center of a given compact quantum group.
\begin{defn}
[\cite{Wang-JFA}]
 A subgroup $H=(B,\Psi)$ of a compact quantum group $G=(A,\Phi)$ is said to be central if $(\rho\otimes \id)\Phi = (\rho\otimes \id)\sigma\Phi$, where $\rho:A\to B$ denotes the corresponding surjection and $\sigma$ the flip map on $A\otimes A$.\end{defn}
\begin{pro}
  A central subgroup of a compact quantum group is always co-commutative.
\end{pro}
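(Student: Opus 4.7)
The plan is straightforward: apply $\id \otimes \rho$ to both sides of the centrality identity $(\rho\otimes \id)\Phi = (\rho\otimes \id)\sigma\Phi$ and then use the defining intertwining relation $(\rho \otimes \rho)\Phi = \Psi \circ \rho$ of the quantum subgroup surjection together with surjectivity of $\rho$. Cocommutativity of $\Psi$ means $\sigma \circ \Psi = \Psi$ on $B$, and since $\rho(A) = B$, it suffices to establish this identity after precomposing with $\rho$, i.e., to show $\sigma \circ \Psi \circ \rho = \Psi \circ \rho$ as maps $A \to B \otimes B$.

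Concretely, applying $\id \otimes \rho$ to the left-hand side of the centrality identity gives $(\id \otimes \rho)(\rho \otimes \id)\Phi = (\rho \otimes \rho)\Phi = \Psi \circ \rho$. For the right-hand side, note that $(\id \otimes \rho)(\rho \otimes \id)\sigma = (\rho \otimes \rho)\sigma = \sigma \circ (\rho \otimes \rho)$, since the flip commutes with the tensor product of two copies of the same map. Thus the right-hand side becomes $\sigma \circ (\rho \otimes \rho) \circ \Phi = \sigma \circ \Psi \circ \rho$. Equating the two expressions yields $\Psi \circ \rho = \sigma \circ \Psi \circ \rho$ on all of $A$, and surjectivity of $\rho$ then gives $\Psi = \sigma \circ \Psi$, i.e., $H$ is cocommutative. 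There is no substantive obstacle here: the entire argument amounts to a single application of $\id \otimes \rho$, and the observation that $\sigma$ commutes through a tensor product of two copies of the same homomorphism.
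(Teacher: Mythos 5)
Your proof is correct and follows essentially the same route as the paper: apply $\id\otimes\rho$ to the centrality identity, collapse $(\id\otimes\rho)(\rho\otimes\id)$ to $\rho\otimes\rho$, commute the flip past $\rho\otimes\rho$, and conclude by surjectivity of $\rho$. No issues.
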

\begin{proof} We have that to show that $\Psi(a)=\sigma\Psi(a)$ for all $a\in H$.

Let $s\in A$ such that $\rho(s)=a$, then we have
\begin{align*} 
\Psi(\rho(s))&=(\rho\otimes\rho)\Phi(s)\\
&= (\id\otimes \rho)(\rho\otimes \id)\Phi(s)\\
&= (\id\otimes \rho)(\rho\otimes \id)\sigma\Phi(s)\\
\sigma(\rho\otimes\rho)\Phi(s) &=\sigma\Psi(\rho(s))
\end{align*}
Hence, we are done.  \end{proof}
It follows easily from the definitions that central subgroups of compact quantum groups are always normal.
\begin{thm}
 Let $H=(B,\Psi)$ be a subgroup of a compact quantum group $G=(A,\Phi)$. Then $H$ is a central subgroup of $G$ if and only if given any irreducible representation $u^{\tau}$ of $G$, there exists a unique 1-dimensional representation $\lambda_n$ of $H$ such that $u^{\tau}$ restricted to $H$ decomposes as a $d_{\tau}$ sum of $\lambda_n$, where $d_{\tau}$ denotes the dimension of $u_{\tau}$. In other words, 
$$ (u^{\tau}_{|H},\lambda_n)=d_{\tau}  $$ 
\end{thm}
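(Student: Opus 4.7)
The plan is to reduce the centrality condition $(\rho\otimes\id)\Phi = (\rho\otimes\id)\sigma\Phi$ to an identity on the matrix coefficients $u^\tau_{ij}$ of irreducible representations $u^\tau = ((u^\tau_{ij}))$ of $G$, since these span the canonical dense Hopf $\ast$-algebra $\mathcal{A}$ and both sides of the centrality equation are continuous. Applying the two maps to $u^\tau_{ij}$ and using $\Phi(u^\tau_{ij}) = \sum_k u^\tau_{ik}\otimes u^\tau_{kj}$ yields the equivalent identity
\[
\sum_k \rho(u^\tau_{ik}) \otimes u^\tau_{kj} \;=\; \sum_k \rho(u^\tau_{kj}) \otimes u^\tau_{ik}
\]
in $B\otimes A$, required for every $\tau$ and all $i,j$.

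For the forward direction, I would exploit linear independence of the matrix coefficients of a fixed irreducible. By Peter--Weyl for compact quantum groups there exist linear functionals $\phi^{\tau}_{pq}$ on $\mathcal{A}$ with $\phi^{\tau}_{pq}(u^\sigma_{rs}) = \delta_{\tau\sigma}\delta_{pr}\delta_{qs}$. Pairing the second tensor factor of the displayed identity against $\phi^{\tau}_{pq}$ yields the scalar identity $\rho(u^\tau_{ip})\delta_{qj} = \rho(u^\tau_{qj})\delta_{ip}$ in $B$, valid for all $i,j,p,q$. Specialising $p=i$ forces $\rho(u^\tau_{qj})=0$ whenever $q\neq j$ and forces all the diagonal entries $\rho(u^\tau_{ii})$ to equal a common element $\lambda^\tau\in B$, so $\rho(u^\tau) = \lambda^\tau\cdot I_{d_\tau}$. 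Because $\rho(u^\tau)$ is a representation of $H$, comparing $(\id\otimes\Psi)\rho(u^\tau)$ with the representation identity shows $\Psi(\lambda^\tau) = \lambda^\tau\otimes\lambda^\tau$ and $\lambda^\tau$ is unitary, i.e.\ $\lambda^\tau$ is a 1-dimensional representation of $H$ occurring with multiplicity $d_\tau$ in $u^\tau|_H$; uniqueness of $\lambda^\tau$ is then immediate from inequivalence of distinct 1-dimensional representations.

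For the reverse direction, the hypothesis $(u^\tau|_H,\lambda^\tau)=d_\tau$ provides a unitary $W^\tau\in U(d_\tau)$ with $W^\tau\rho(u^\tau)(W^\tau)^\ast = \lambda^\tau I_{d_\tau}$. Passing to the equivalent representation $v^\tau = (W^\tau\otimes 1)u^\tau(W^{\tau\ast}\otimes 1)$ --- whose matrix coefficients, ranging over one representative per equivalence class, still linearly span $\mathcal{A}$ by Peter--Weyl --- reduces us to the case $\rho(v^\tau_{ij}) = \lambda^\tau\delta_{ij}$. Both sides of the centrality identity then collapse to $\lambda^\tau\otimes v^\tau_{ij}$, so centrality holds on $\mathcal{A}$ and, by continuity of all operations involved, on all of $A$.

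The only delicate point is bookkeeping: verifying that the element $\lambda^\tau\in B$ produced in the forward direction is genuinely a 1-dimensional representation, which is forced because $\lambda^\tau I_{d_\tau}$ inherits the representation identities from $\rho(u^\tau)$, and confirming that replacing each $u^\tau$ by an equivalent representative in the reverse direction does not shrink the span of matrix coefficients. Both amount to routine applications of Woronowicz's Peter--Weyl theorem, so no serious obstacle is expected.
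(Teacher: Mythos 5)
Your proof is correct, and the forward direction takes a genuinely more direct route than the paper's. The paper first invokes its Proposition 6.2 (a central subgroup is cocommutative) to know that $u^{\tau}|_H$ splits into one-dimensional representations $\lambda_1,\dots,\lambda_{d_\tau}$, normalizes by unitary equivalence so that $\rho(u^{\tau}_{ij})=0$ for $i\neq j$, and only then applies the centrality relation to the entries $u^{\tau}_{1i}$ to conclude $\lambda_1=\lambda_i$. You instead equate coefficients in $\sum_k\rho(u^{\tau}_{ik})\otimes u^{\tau}_{kj}=\sum_k\rho(u^{\tau}_{kj})\otimes u^{\tau}_{ik}$ against the linearly independent family $\{u^{\tau}_{pq}\}$ to get $\rho(u^{\tau}_{ip})\delta_{qj}=\rho(u^{\tau}_{qj})\delta_{ip}$ in one stroke; setting $p=i$ simultaneously kills the off-diagonal entries and identifies all diagonal entries with a single $\lambda^{\tau}$, after which the representation identity forces $\Psi(\lambda^{\tau})=\lambda^{\tau}\otimes\lambda^{\tau}$ with $\lambda^{\tau}$ unitary. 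This buys you independence from Proposition 6.2 (whose conclusion you in fact recover as a byproduct, since the $\lambda^{\tau}$ generate $B$) and removes the need for any preliminary diagonalization in the forward direction; the small price is that you must justify applying the dual functionals $\phi^{\tau}_{pq}$, which is unproblematic because both sides of the identity lie in $B\otimes\mathcal{A}$ with second legs in a fixed finite-dimensional subspace, so equating coefficients against a linearly independent family is legitimate. Your reverse direction is exactly the computation the paper dismisses as straightforward, made explicit together with the density and continuity step needed to pass from $\mathcal{A}$ to $A$.
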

\begin{proof} 
$(\Leftarrow)$ This is be easily seen by a straightforward calculation as $$(\rho\otimes \id)\Phi(u^{\tau}_{ij})=(\rho\otimes \id)\sigma\Phi(u^{\alpha}_{ij})$$ for matrix elements of all irreducible representations.\\
$(\Rightarrow)$ $H$ is central, so by the previous lemma, $H$ is co-commutative.      
But then any irreducible representation $(u^{\tau})$ when restricted to $H$ decomposes as a sum of 1-dimensional representations of $H$. By unitary equivalence, we may assume that $\rho(u_{ij}^{\tau})=0$ if $i\neq j$ and $\rho(u_{11}^{\tau})=\lambda_1$ and $\rho(u_{ii}^{\tau})=\lambda_i$ for some $i\neq 1$. We want to show that $\lambda_1=\lambda_i$. Now,
\begin{align*} 
(\rho\otimes \id)\Phi(u^{\tau}_{1i})
&=(\rho\otimes \id)\sum_{k}u^{\tau}_{1k}\otimes u_{ki}^{\tau} \\
&=\sum_{k}\rho(u_{1k}^{\tau})\otimes u_{ki}^{\tau} 
=\lambda_1\otimes u^{\tau}_{1i}
\end{align*}
and
\begin{align*} 
(\rho\otimes \id)\sigma \Phi(u_{1i}^{\tau})
&=(\rho\otimes \id)\sum_k u_{ki}^{\tau}\otimes u_{1k}^{\tau}\\
& =\sum_k \rho(u_{ki}^{\tau})\otimes u^{\tau}_{1k}
= \lambda_i \otimes u_{1i}^{\tau}
\end{align*}
and hence, $\lambda_1=\lambda_i$.
\end{proof}

\begin{cor}
  Suppose $G = (A, \Phi)$ is a compact quantum group and $H = (B, \Psi)$ is a central subgroup of it. Suppose $\alpha \in \Aut_\chi(G)$. Then,
\begin{center}
$\xymatrix{
A \ar[d]_{\rho} \ar[r]^{\alpha} & A \ar[d]^{\rho} \\
B \ar[r]_{id} & B }$
\end{center}   
commutes.
\end{cor}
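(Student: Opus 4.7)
The plan is to verify the commutativity on matrix entries of irreducible representations of $G$, and then extend by density and continuity. The two inputs are the representation-theoretic description of central subgroups (Theorem 6.3) and the fact that $\alpha \in \Aut_\chi(G)$ intertwines each irreducible representation with itself up to a unitary equivalence.

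First, I would record the consequence of Theorem 6.3 in any basis. Given any irreducible representation $u^\tau = ((u^\tau_{ij})) \in M_{d_\tau}(A)$, centrality gives a one-dimensional representation $\lambda_\tau$ of $H$ and a basis in which $\rho(u^\tau_{ij}) = \delta_{ij} \lambda_\tau$; equivalently, $\rho(u^\tau) = \lambda_\tau \cdot I_{d_\tau}$ in $M_{d_\tau}(B)$. Since $\lambda_\tau \cdot I_{d_\tau}$ is a scalar multiple of the identity in $M_{d_\tau}(\mathbb{C}) \otimes B$, conjugation by any unitary in $M_{d_\tau}(\mathbb{C})$ fixes it, so the identity $\rho(u^\tau_{ij}) = \delta_{ij} \lambda_\tau$ actually holds in every basis.

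Next, I would use the fact that $\alpha \in \Aut_\chi(G)$ preserves the character of every irreducible representation. By Proposition 3.1(5), $\alpha(u^\tau)$ is again an irreducible representation, and since $\chi_{\alpha(u^\tau)} = \alpha(\chi_{u^\tau}) = \chi_{u^\tau}$, it is unitarily equivalent to $u^\tau$. Hence there exists $U_\tau \in U(d_\tau)$ such that
\[
\alpha(u^\tau_{ij}) = \sum_{k,l} (U_\tau)_{ik} \overline{(U_\tau)_{jl}}\, u^\tau_{kl}.
\]
Applying $\rho$ and using the previous paragraph,
\[
\rho(\alpha(u^\tau_{ij})) = \sum_{k,l} (U_\tau)_{ik} \overline{(U_\tau)_{jl}} \, \delta_{kl}\, \lambda_\tau = (U_\tau U_\tau^*)_{ij} \, \lambda_\tau = \delta_{ij} \lambda_\tau = \rho(u^\tau_{ij}).
\]
Thus $\rho \circ \alpha$ and $\rho$ agree on every matrix entry of every irreducible representation of $G$.

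Finally, these matrix entries linearly span the dense Hopf $\ast$-algebra $\mathcal{A} \subseteq A$, so $\rho \circ \alpha = \rho$ on $\mathcal{A}$. Since $\rho$ and $\alpha$ are continuous on $A$, this equality extends to all of $A$, which is exactly the statement that the displayed diagram (with $\mathrm{id}$ along the bottom) commutes. No step presents a serious obstacle; the only point requiring mild care is the observation that the scalar-identity form $\rho(u^\tau) = \lambda_\tau I$ is basis-independent, which is what allows us to combine it freely with the conjugation $\alpha(u^\tau) = (U_\tau \otimes 1) u^\tau (U_\tau^* \otimes 1)$.
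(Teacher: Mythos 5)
Your proposal is correct and follows exactly the route the paper takes: apply Theorem 6.3 to write $\rho(u^\tau_{ij})=\delta_{ij}\lambda^\tau$, use $\alpha(u^\tau)=(U_\tau\otimes 1)u^\tau(U_\tau^*\otimes 1)$ for $\alpha\in\Aut_\chi(G)$, and verify $\rho\circ\alpha=\rho$ on matrix entries by the unitarity computation $(U_\tau U_\tau^*)_{ij}\lambda_\tau=\delta_{ij}\lambda_\tau$. You have simply written out the ``direct calculation'' that the paper leaves implicit, together with the (correct and worthwhile) remark that the scalar form of $\rho(u^\tau)$ is basis-independent.
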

\begin{proof}
  Since $\alpha \in \Aut_\chi(G)$, for any irreducible representation $u^{\tau}$ of $G$, with dimension $d_{\tau}$, 
  $$\alpha(u^{\tau})=(t\otimes 1)u(t^{\ast}\otimes 1)$$ 
  for some $t$ in $U(d_{\tau})$. The result now follows by direct calculation, using the previous theorem and assuming, by unitary equivalence, that given an irreducible representation $u^{\tau}$:
  $$\rho(u_{ij}^{\tau})= \begin{cases}
		0  & \text{if } i\neq j \\
		\lambda^{\tau} & \text{if } i=j
	
\end{cases}$$
where $\lambda^{\tau}$ is a 1-dimensional representation of $H$.
\end{proof}

\begin{pro}
  Let $G=(A,\Phi)$ be a compact quantum group and let $X_G$ be its maximal classical compact subgroup. Let $X_0:=\{s\in X_G : \alpha_s = id \;\mathrm{on}\; A\}$, i.e. the set of 1-dimensional $*-$representations of $A$ such that the associated induced inner automorphisms of $G$ is trivial. Then $X_0$ is a subgroup of $X_A$ and is a central subgroup of $G$. 
\end{pro}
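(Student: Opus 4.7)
The plan is to handle the two claims separately. For the first, that $X_0$ is a subgroup of $X_G$, I would begin by establishing the composition rule $\alpha_s \circ \alpha_t = \alpha_{st}$ for $s,t \in X_G$. This reduces to the two identities $\lambda_s \lambda_t = \lambda_{st}$ and $\rho_s \rho_t = \rho_{st}$ (each a one-line coassociativity computation in Sweedler notation), combined with the commutation $\lambda_s \rho_t = \rho_t \lambda_s$ already noted in the text. Since the unit of $X_G$ is $\epsilon_G$, and $\lambda_{\epsilon_G} = \rho_{\epsilon_G} = \id$ by the counit axioms, one obtains $\alpha_{\epsilon_G} = \id$, so $\epsilon_G \in X_0$; closure under multiplication and inverses is then immediate. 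For closedness of $X_0$ in $X_G$, I would observe that for each $a$ in the dense Hopf $\ast$-algebra $\mathcal{A}$ the formula $\alpha_s(a) = \sum s^{-1}(a_{(1)}) a_{(2)} s(a_{(3)})$ is a finite sum depending continuously on $s$ in the weak-$\ast$ topology of $X_G$, and then extend to arbitrary $a \in A$ using that each $\alpha_s$ is isometric and $\mathcal{A}$ is norm-dense in $A$.

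For the second claim, the surjection $\rho_0 : A \to C(X_0)$ presenting $X_0$ as a quantum subgroup of $G$ sends $a$ to the function $s \mapsto s(a)$. Evaluating the defining identity $(\rho_0 \otimes \id)\Phi(a) = (\rho_0 \otimes \id)\sigma\Phi(a)$ in $C(X_0) \otimes A$ at a point $s \in X_0$ in the first tensor factor reduces it to the pointwise statement
\[ (s \otimes \id)\Phi(a) = (\id \otimes s)\Phi(a), \quad \forall\, a \in A, \; \forall\, s \in X_0. \]
Now the left-hand side is $\lambda_{s^{-1}}(a)$ and the right-hand side is $\rho_s(a)$. Applying $\lambda_s$ to both sides and using $\lambda_s \lambda_{s^{-1}} = \lambda_{\epsilon_G} = \id$, this is equivalent to $a = \lambda_s \rho_s(a) = \alpha_s(a)$, which holds for every $s \in X_0$ by the very definition of $X_0$. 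Hence the central condition for $\rho_0$ is exactly the defining condition of $X_0$, so $X_0$ is central in $G$.

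The proof is essentially bookkeeping: once both definitions are unravelled via Sweedler notation, the centrality of $\rho_0$ and the identity $\alpha_s = \id$ reduce to the same equation. The only delicate point is keeping track of inverses in $X_G$ (the $s^{-1}$ appearing in the definition of $\lambda_s$), to ensure that the equivalence $\lambda_{s^{-1}} = \rho_s \Leftrightarrow \alpha_s = \id$ is stated with the correct direction.
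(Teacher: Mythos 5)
Your proof is correct and follows essentially the same route as the paper: the centrality claim is reduced, by evaluating the identity $(\rho_0\otimes\id)\Phi=(\rho_0\otimes\id)\sigma\Phi$ at points $s\in X_0$, to the equivalence $\lambda_{s^{-1}}=\rho_s \Leftrightarrow \alpha_s=\id$, which is exactly the chain of implications in the paper's argument. Your treatment of the subgroup claim (via $\alpha_s\alpha_t=\alpha_{st}$ and closedness of $X_0$) is more detailed than the paper's, which dismisses it as easy to check, but it is the natural filling-in of that step rather than a different approach.
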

\begin{proof} 
Its easy to check that $X_0$ is a subgroup of $X_A$. 

Now since 
\begin{alignat*}{4}
&&\alpha_s&=\lambda_s\rho_s&\\
&\Rightarrow& \alpha_s&=\id \Leftrightarrow \lambda_{s^{-1}}=\rho_s&\\
&\Rightarrow& (s\otimes \id)(\rho\otimes \id)\Phi &= (\id\otimes s)(\id\otimes \rho)\Phi,\;&\forall s\in X_0\\
&\Rightarrow& (s\otimes \id)(\rho\otimes \id)\Phi &= (s\otimes \id)(\rho\otimes \id)\sigma\Phi\\
&\Rightarrow& (\rho\otimes \id)\Phi&=(\rho\otimes \id)\sigma\Phi&
\end{alignat*}
and hence, $X_0$ is central in $G$.
 \end{proof}
 
Let $G=(A,\Phi)$ be a compact quantum group, and $\widehat{G}$ be the object corresponding to equivalence classes of its irreducible representations. Following \cite{Mcmullen}, we give the following 
\begin{defn} Let $\Sigma\subseteq \widehat{G}$ be a subobject. For $a,b\in \widehat{G}$, we define $a\sim b$ if and only if $a\times \bar{b}\cap \Sigma\neq \phi$. Here $\bar{b}$ denotes the conjugate of $b$ and $a\times b$ denotes the set of all elements of $\widehat{G}$ representatives of which are subrepresentations of $u^a\otimes u^b$ where $u^a, u^b$ are irreducible representations of $G$ and $[u^a]=a$ and $[u^b]=b$.  \end{defn}
Using Propn 3.2 of \cite{Podles-W}, its easy to check that this defines an equivalence relation. We call the equivalence classes of this relation as $\Sigma-$cosets.

Obviously, the set-wise product of 2 $\Sigma-$ cosets is a union of $\Sigma-$cosets.

\begin{defn} We say a subobject $\Sigma\subseteq \widehat{G}$ is a central subobject if the $\Sigma-$cosets form a group. In this case, the product of two $\Sigma-$cosets is itself a $\Sigma-$coset. $\Sigma$, which is itself a $\Sigma-$coset, acts as the identity element. The group is denoted as $\widehat{G}/\Sigma$. \end{defn}

\begin{pro}
Let $G=(A,\Phi)$ be a compact quantum group and $H=(B,\Psi)$ a normal subgroup. Let $\Sigma$ denote the subobject of $\widehat{G}$ corresponding to equivalence classes of irreducible representations of $G$ that decompose as direct sum of trivial representation when restricted to $H$. Then $H$ is central if and only if $\Sigma-$cosets form a group.
\end{pro}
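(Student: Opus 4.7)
The plan is to show that when $H$ is central, Theorem 6.3 supplies a well-defined ``character map'' $a\mapsto \lambda_a$ from $\widehat{G}$ to the group of $1$-dimensional representations of $H$, whose fibres are exactly the $\Sigma$-cosets; conversely, any group structure on $\Sigma$-cosets forces the restriction hypothesis of Theorem 6.3.

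\emph{Forward direction.} Assume $H$ is central. By Theorem 6.3, for each $a\in \widehat{G}$ there is a unique $1$-dimensional representation $\lambda_a$ of $H$ such that $u^a_{|H}=d_a\cdot \lambda_a$. I would first verify that $\lambda$ is multiplicative on fusion: for any irreducible $\sigma$ appearing in $a\times b$, restricting to $H$ gives $\sigma_{|H}\subseteq u^a_{|H}\otimes u^b_{|H}=d_ad_b\cdot \lambda_a\lambda_b$, and hence $\lambda_\sigma=\lambda_a\lambda_b$; by considering the trivial subrepresentation of $u^a\otimes \overline{u^a}$ one obtains $\lambda_{\bar a}=\lambda_a^{-1}$. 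Since $a\sim b$ means $a\times \bar b\cap \Sigma\neq \emptyset$, and any $\tau\in a\times \bar b$ satisfies $\lambda_\tau=\lambda_a\lambda_b^{-1}$, the relation $a\sim b$ is equivalent to $\lambda_a=\lambda_b$. The $\Sigma$-cosets are therefore the nonempty fibres of $\lambda$, and multiplicativity makes $[a]\cdot[b]$ a single coset (corresponding to $\lambda_a\lambda_b$), with identity $\Sigma=[1]$ and inverse $[a]^{-1}=[\bar a]$; associativity is automatic from that of tensor product.

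\emph{Converse direction.} Assume the $\Sigma$-cosets form a group and fix $a\in \widehat{G}$. Since the trivial representation lies in $a\times \bar a$, the product $[a]\cdot [\bar a]$ contains $\Sigma$, hence equals $\Sigma$ by the group axioms. Consequently every irreducible subrepresentation of $u^a\otimes \overline{u^a}$ lies in $\Sigma$, so its restriction to $H$ is a direct sum of copies of $1_H$; writing $V:=u^a_{|H}$, this says $V\otimes \bar V$ is a sum of trivial representations of $H$. Decomposing $V=\bigoplus_i m_iV_i$ into distinct irreducibles of $H$ of dimensions $d_i$, the multiplicity of $1_H$ in $V\otimes \bar V$ equals $\dim \hom_H(V,V)=\sum_i m_i^2$, while $\dim(V\otimes \bar V)=(\sum_i m_id_i)^2$. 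The equality $(\sum_i m_id_i)^2=\sum_i m_i^2$ forces exactly one $m_i\neq 0$ with $d_i=1$, so $V$ is $d_a$ copies of a single $1$-dimensional representation of $H$. Applying Theorem 6.3, $H$ is central.

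The main obstacle is the Schur-type step in the converse: passing from ``$V\otimes \bar V$ consists only of trivial $H$-representations'' to ``$V$ is a multiple of a single $1$-dimensional $H$-representation''. One cannot invoke Proposition 6.2 (central $\Rightarrow$ co-commutative) at the start of the converse, since co-commutativity of $H$ is precisely what we are deducing; the argument must a priori allow $H$ to have higher-dimensional irreducibles and rule them out through the dimension identity above.
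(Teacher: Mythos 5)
Your proof is correct, and the forward direction is essentially the paper's: both build the multiplicative map $a\mapsto\lambda_a$ into $\widehat{H}$ from Theorem 6.3 and identify the $\Sigma$-cosets with its fibres. The converse is where you diverge in execution, though not in substance. The paper argues by contrapositive: if $H$ is not central, Theorem 6.3 gives an irreducible $u^{\sigma}$ with $u^{\sigma}_{|H}=\oplus_i\xi_i$ where either some $\xi_i$ has dimension $>1$ or two of the $\xi_i$ are distinct one-dimensionals, and a two-case inspection of $\oplus_{i,j}(\xi_i\otimes\bar{\xi_j})$ shows $(u^{\sigma}\otimes\overline{u^{\sigma}})_{|H}$ contains both $1_H$ and a nontrivial representation, so $[\sigma]\times[\bar{\sigma}]$ meets more than one coset. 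You instead argue directly: the group axioms force $[a]\cdot[\bar a]=\Sigma$, hence $V\otimes\bar V$ is trivial for $V=u^a_{|H}$, and the single Frobenius-reciprocity identity $(\sum_i m_id_i)^2=\dim\hom_H(V,V)=\sum_i m_i^2$ collapses $V$ to a multiple of one one-dimensional representation. This replaces the paper's case analysis with one uniform dimension count and avoids the contrapositive, at the modest cost of invoking $\dim\hom(1_H,V\otimes\bar V)=\dim\hom_H(V,V)$ for quantum groups (which is standard and legitimate here). You are also right that Proposition 6.2 cannot be used at the start of the converse; the paper likewise does not use it there, relying only on the equivalence in Theorem 6.3.
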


\begin{proof} $(\Rightarrow)$ Let $H$ be central. By Theorem 6.3, we know that for any irreducible representation $u^{\tau}$ of $G$, there exists a unique 1-dimensional representation $\phi^{\tau}\in \widehat{H}$ such that $(u^{\tau}_{|H},\phi^{\tau})=d_{\tau}$, where $d_{\tau}$ denotes the dimension of $u^{\tau}$. We consider the following map-
$$\pi : \widehat{G}\mapsto \widehat{H}$$
$$[u^{\tau}]\mapsto \phi^{\tau}$$
where $[u^{\tau}]$ denotes the equivalence class of $u^{\tau}$ in $\widehat{G}$.
The map $\pi$ is then easily checked to be multiplicative, in the sense that if we take two irreducible representations of $G$, $u^{\alpha}$ and $u^{\beta}$, then $$\sigma\in [u^{\alpha}]\times [u^{\beta}]\mapsto \phi^{\alpha}\cdot\phi^{\beta}$$

Now, $\Sigma$ consists of $[u^{\tau}]$ such that $\pi([u^{\tau}])=1_H$, so this implies that $$\alpha\sim\beta\Leftrightarrow \pi(u^{\alpha})=\pi(u^{\beta})$$
and hence, cosets correspond to preimages of elements of the group $\widehat{H}$ and since $\pi$ is multiplicative, we have that $\widehat{G}/\Sigma$ is indeed a group.\\
$(\Leftarrow)$ Let $H$ not be central, then by Theorem 6.3, there exists some irreducible representation $u^{\sigma}$ of $G$, with $[u^{\sigma}]\in \widehat{G}$ denoting its equivalence class, such that $u^{\sigma}_{|H}=\oplus_{i=1}^{n}\xi_{i}$ upto equivalence, where either for some $i$, dim$(\xi_i)>1$ or, in case all $\xi_i$ have dimension 1, for some $i\neq j, \xi_i\neq\xi_j$, where $\xi_i$'s are irreducible representations of $H$. Then we have that 
$$u^\sigma\otimes\bar{u^\sigma}=\oplus_{i,j=1}^{n}(\xi_i\otimes \bar{\xi_j})$$ upto equivalence.
But then, in either of the aforementioned cases, $1_H$ and some other non-trivial representation appear in the decomposition of $(u^{\sigma}\otimes \bar{u^{\sigma}})_{|H}$ into irreducible representations of $H$. And so if we let $[\sigma]$ denote the $\Sigma$-coset corresponding to $[u^{\sigma}]$ and $[\bar{\sigma}]$ the $\Sigma$-coset corresponding to $[\bar{u^{\sigma}}]$, $[\sigma]\times[\bar{\sigma}]$ is a union of more than than coset, which gives a contradiction. \end{proof}
\begin{pro} Let $\mathcal{Z}:=\{\Sigma\subseteq \widehat{G}: \Sigma$ is a central subobject $\}$. Let $$\widetilde{\Sigma}:= \cap_{\Sigma\in \mathcal{Z}}\Sigma$$ Then $\widetilde{\Sigma}\in \mathcal{Z}$ \end{pro}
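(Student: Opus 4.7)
The plan is to show that $\widetilde{\Sigma}$-equivalence classes coincide with the fibers of a single multiplicative map from $\widehat{G}$ into an honest group, from which the group structure on the cosets is immediate.

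For each $\Sigma \in \mathcal{Z}$, centrality provides a group $\widehat{G}/\Sigma$ and a canonical projection $\pi_\Sigma : \widehat{G} \to \widehat{G}/\Sigma$ satisfying $\pi_\Sigma(c) = \pi_\Sigma(a)\pi_\Sigma(b)$ for every $c \in a \times b$, and $\pi_\Sigma(\bar{a}) = \pi_\Sigma(a)^{-1}$ (the latter since the trivial representation lies in $a \times \bar{a}$ and in $\Sigma$, so it represents the identity coset). I would then form the diagonal map
\[\phi : \widehat{G} \to \prod_{\Sigma \in \mathcal{Z}} \widehat{G}/\Sigma,\]
and let $\Gamma$ denote its image. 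Because each $\pi_\Sigma$ carries products to products and conjugates to inverses, $\Gamma$ is closed under the product and inverse of the ambient group, hence is a subgroup.

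The next step is to identify the $\widetilde{\Sigma}$-cosets with the fibers of $\phi$. Since $\phi(c) = e$ exactly when $c \in \Sigma$ for every $\Sigma \in \mathcal{Z}$, i.e.\ when $c \in \widetilde{\Sigma}$, the following equivalence holds: if $\phi(a) = \phi(b)$, then for any $c \in a \times \bar{b}$ we have $\phi(c) = \phi(a)\phi(b)^{-1} = e$, hence $a \times \bar{b} \subseteq \widetilde{\Sigma}$; conversely, if some $c \in a \times \bar{b}$ lies in $\widetilde{\Sigma}$, then $\phi(a)\phi(b)^{-1} = \phi(c) = e$. Thus $a \sim_{\widetilde{\Sigma}} b$ if and only if $\phi(a) = \phi(b)$, so the $\widetilde{\Sigma}$-cosets are precisely the fibers of $\phi$, and are in bijection with $\Gamma$.

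Finally, I would verify that the coset multiplication agrees with the group law on $\Gamma$: for any $c \in a \times b$, $\phi(c) = \phi(a)\phi(b)$, so every element of $a \times b$ lies in the single $\widetilde{\Sigma}$-coset corresponding to $\phi(a)\phi(b)$. Hence the product of two $\widetilde{\Sigma}$-cosets is a single $\widetilde{\Sigma}$-coset, and the cosets form a group isomorphic to $\Gamma$. Since an intersection of subobjects is a subobject, $\widetilde{\Sigma}$ is itself a subobject, and thus $\widetilde{\Sigma} \in \mathcal{Z}$. The one step requiring genuine care is the identification of fibers of $\phi$ with $\widetilde{\Sigma}$-cosets, because the definition of $\sim_{\widetilde{\Sigma}}$ demands a common witness $c \in a \times \bar{b}$ lying in the \emph{intersection} of all $\Sigma$'s rather than a separate witness for each $\Sigma$; the key observation is that once $\phi(a) = \phi(b)$, the entire set $a \times \bar{b}$ is forced into $\widetilde{\Sigma}$, so any witness works.
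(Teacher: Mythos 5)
Your proposal is correct and is essentially the paper's argument in different packaging: the paper verifies directly that two classes lie in the same $\widetilde{\Sigma}$-coset if and only if they lie in the same $\Sigma$-coset for every $\Sigma\in\mathcal{Z}$, which is exactly the statement that the $\widetilde{\Sigma}$-cosets are the fibers of your diagonal map $\phi$. The step you flag as requiring care --- that a single witness in $a\times\bar{b}\cap\Sigma$ forces all of $a\times\bar{b}$ into $\Sigma$, hence into the intersection --- is precisely the paper's key observation (its fact (ii)), so the two proofs share the same crux.
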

\begin{proof} Let $[a]$ and $[b]$ be two $\widetilde{\Sigma}-$cosets, we want to show that $[a]\times[b]$ is also a $\widetilde{\Sigma}-$coset. This follows easily from the following two facts -: \\
\item(i) If $a$ and $a_1$ belong to the same $\widetilde{\Sigma}-$coset, then they belong to the same $\Sigma -$coset for all $\Sigma\in Z$, obvious as $\widetilde{\Sigma}\subseteq \Sigma$ for all $\Sigma\in \mathcal{Z}$.\\
\item(ii) If $a$ and $a_1$ belong to the same $\Sigma-$coset for every $\Sigma\in \mathcal{Z}$ then $a$ and $a_1$ belong to the same $\widetilde{\Sigma}-$coset. This is true because for $\Sigma\in \mathcal{Z}$, the $\Sigma-$cosets form a group. Now, since $a$ and $a_1$ belong to the same $\Sigma-$coset, we have that $$a\times \bar{a_1}\cap \Sigma\neq \phi$$
But then as $\Sigma-$cosets form a group, we have that $[a]\times [\bar{a_1}]=\Sigma$, where $[a]$ denotes the $\Sigma$-coset corresponding to $a$. This implies that for any $\sigma \in a\times\bar{a_1}, \sigma \in \Sigma$ for all $\Sigma\in \mathcal{Z}$, so $\sigma\in \widetilde{\Sigma}$ and hence, $a$ and $a_1$ are in the same $\widetilde{\Sigma}-$coset. \end{proof}

We call $\widetilde{\Sigma}\subseteq \widehat{G}$ the center subobject of $G$. This corresponds to the center of the compact quantum group $G=(A,\Phi)$. The subalgebra in $A$ generated by $\widetilde{\Sigma}$ gives us the quotient quantum group $G/Z(G)$.

Let $G=(A,\Phi)$ be a compact quantum group. We assume that $G$ is maximal. Let $\Sigma \subseteq \widehat{G}$ be a central object and $H_\Sigma= \widehat{G}/\Sigma$ be the group of $\Sigma$-cosets. We, then have: 
\begin{pro}
  $C^\ast(H_\Sigma)$ is a central subgroup of $G$, with $\Sigma$ being the subobject of $\widehat{G}$ cosisting of equivalence classes of irreducible representations of $G$ that decompose as a sum of trivial representation when restricted to it, so its left and equivalently right, coset space is generated by matrix entries of the representatives of elements of $\Sigma$ as a subalgebra of $A$.  
\end{pro}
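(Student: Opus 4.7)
The plan is to construct the surjection $\rho : A \to C^*(H_\Sigma)$ via Woronowicz's Tannaka--Krein theorem (Proposition 3.2) and then read off centrality, the identification of $\Sigma$, and the coset-space description from the explicit formula for $\rho$. Write $\delta_h \in C^*(H_\Sigma)$ for the canonical unitary attached to $h \in H_\Sigma$, so $\Psi(\delta_h) = \delta_h \otimes \delta_h$.

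First, for each irreducible $u^\tau$ of $G$ with $[\tau] \in H_\Sigma$ its $\Sigma$-coset, I would assign the representation $V^\tau := I_{\mathcal{H}_\tau} \otimes \delta_{[\tau]} \in B(\mathcal{H}_\tau) \otimes C^*(H_\Sigma)$ of $C^*(H_\Sigma)$ on $\mathcal{H}_\tau$ (equivalent to $d_\tau$ copies of $\delta_{[\tau]}$), and extend to all of $\Rep(G)$ by direct sums. The main step is to verify that $\tau \mapsto V^\tau$ is a unitary tensor functor $\Rep(G) \to \Rep(C^*(H_\Sigma))$. Intertwiner compatibility is automatic, since any morphism in $\Rep(G)$ can only connect isomorphic irreducible summands (hence summands in the same coset, on which $V$ acts by a common scalar $\delta_{[\cdot]}$). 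Tensor compatibility is the crux: decomposing $u^\alpha \otimes u^\beta = \bigoplus_\gamma m^\gamma_{\alpha\beta}\, u^\gamma$, the tensor product representation satisfies $V^\alpha \otimes V^\beta = I_{\mathcal{H}_\alpha \otimes \mathcal{H}_\beta} \otimes \delta_{[\alpha]}\delta_{[\beta]} = I \otimes \delta_{[\alpha]\cdot[\beta]}$, while the functor applied summand-by-summand gives $\bigoplus_\gamma m^\gamma_{\alpha\beta}\,(I \otimes \delta_{[\gamma]})$. Here the hypothesis that $\Sigma$ is a \emph{central} subobject enters decisively: it forces $[\gamma] = [\alpha]\cdot[\beta]$ for every irreducible $\gamma$ that appears, so the two expressions coincide.

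Given the tensor functor and the assumption $G = G_{\max}$, Proposition 3.2 produces a unique unital $*$-homomorphism $\rho : A \to C^*(H_\Sigma)$ with $\rho(u^\tau_{ij}) = \delta_{ij}\,\delta_{[\tau]}$ intertwining the coproducts; surjectivity is clear since the image contains every $\delta_h$. Centrality is then immediate on matrix entries, with $\sigma$ the flip of Definition 6.1:
\begin{align*}
(\rho \otimes \id)\Phi(u^\tau_{ij})
  &= \sum_k \delta_{ik}\,\delta_{[\tau]} \otimes u^\tau_{kj} = \delta_{[\tau]} \otimes u^\tau_{ij}, \\
(\rho \otimes \id)\sigma\Phi(u^\tau_{ij})
  &= \sum_k \delta_{kj}\,\delta_{[\tau]} \otimes u^\tau_{ik} = \delta_{[\tau]} \otimes u^\tau_{ij}.
\end{align*}

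For the remaining identifications, $(u^\tau)|_{C^*(H_\Sigma)}$ is $d_\tau$ copies of $\delta_{[\tau]}$, which is a direct sum of trivial representations iff $[\tau]$ is the identity of $H_\Sigma$ iff $\tau \in \Sigma$; this matches the asserted description and is consistent with Theorem 6.3. Similarly $(\id \otimes \rho)\Phi(u^\tau_{ij}) = u^\tau_{ij} \otimes \delta_{[\tau]}$ lies in $A \otimes 1$ precisely when $[\tau] = e$, so $A_{G/C^*(H_\Sigma)}$ is the subalgebra generated by matrix entries of representatives of elements of $\Sigma$; by the centrality just proved, this subalgebra equals $A_{C^*(H_\Sigma)\backslash G}$. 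The principal obstacle is the tensor-product compatibility in the construction of the functor: it is exactly the centrality of $\Sigma$ (the group structure on $\Sigma$-cosets) that makes the scalar $\delta_{[\gamma]}$ well defined independently of the irreducible summand $\gamma$ of $u^\alpha \otimes u^\beta$, without which the candidate functor would fail to be monoidal.
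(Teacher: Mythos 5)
Your proposal is correct and follows essentially the same route as the paper: it builds a model/tensor functor sending each irreducible $u^\tau$ to $I_{\mathcal{H}_\tau}\otimes\delta_{[\tau]}$, uses the group structure on $\Sigma$-cosets (centrality of the subobject) to get monoidality, invokes Woronowicz's Tannaka--Krein theorem together with maximality of $G$ to produce the surjection onto $C^\ast(H_\Sigma)$, and then verifies centrality and the coset-space description on matrix entries --- the last two steps being ones the paper leaves as ``follows easily.'' The only quibble is that the relevant existence result is Proposition 3.3 of Woronowicz's Tannaka--Krein paper (as cited in the paper's proof), not Proposition 3.2 of the present paper, which concerns automorphisms.
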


\begin{proof}
We refer to the Preliminaries section and to \cite{Woro-Invent} for definitions of unexplained terms.

We have for the compact quantum group $G=(A, \Phi)$, a triple $$(R_G, \{\mathcal{H}_\alpha\}_{\alpha \in R_G}, \{\Mor(\alpha, \beta)\}_{\alpha,\beta\in R_G})$$ By Proposition 3.3 of \cite{Woro-Invent} for any model , $(M, \{V_r\}_{r \in R_G})$, there exists a $\ast$-homomorphism $$\varphi_M: \mathcal{A} \to M$$ which extends to $A$ as $G$ is maximal. 

We want to give a model with $M = C^\ast(H_\Sigma)$. First, let us take a complete set of irreducible representations $\{\alpha_k\}_{k \in I}$ of $G$. So, $\alpha_k$'s are representatives of elements of $\widehat{G}$ and belong to $R_G$. And, they are complete in the sense that, the map $$\kappa:\{\alpha_k\}_{k \in I} \subseteq R_G \to \widehat{G}$$ $$\alpha_k \mapsto [\alpha_k]$$ is  one-one and onto.

For these $\alpha_k \in R_G$ such that $\alpha_k \in B(\mathcal{H}_k) \otimes A$, we define $$V_{\alpha_k} = 1_{\mu_k} \otimes \delta_k \in B(\mathcal{H}_k) \otimes C^{\ast}(H_{\Sigma})$$ Here, $\delta_k$ is the element of the group $H_\Sigma$  corresponding to the $\Sigma$-coset that contains $[\alpha_k]$. Now, for any $r \in R_G$, we know that there exists a finite set $\{\alpha_{k_1}, \dots, \alpha_{k_n}\}\subseteq \{\alpha_k\}_{k \in I}$ such that there exist $P_{k_i} \in \Mor(\alpha_{k_i}, r)$ with the properties that: 
\begin{align*}
\sum P_{k_1} P_{k_i}^\ast &= I_{\mathcal{H}_r} \\
P_{k_i}^\ast P_{k_i} &= I_{\mathcal{H}_k} \\
\text{and } P_{k_i}P_{k_i}^\ast P_{k_j}P_{k_j}^\ast& = 0 \text{ for all } i\neq j \in \{1, \dots, n \} 
\end{align*}
We then define $V_r \in B(\mathcal{H}_r) \otimes C^\ast(H_\mathcal{E})$ , 
\[V_r := \sum_i P_{k_i}P_{k_i}^\ast \otimes \delta_{k_i}.\]
It is then straightforward to check that:
\[V_r \otop  V_s = V_{rs}\] and \[V_r(t \otimes 1) = (t \otimes 1) V_s\] for all $r, s \in R_G, t \in \Mor(s, r)$. So, we have a model for $$\{R_G, \{\mathcal{H}_\xi\}_{\xi \in R_G}, \Mor(\alpha, \beta)_{\alpha,\beta \in R_G}\}$$ and so a $\ast$-homomorphism: 
\[\varphi_{\Sigma}: A \to C^\ast(H_\Sigma)\]
It follows easily that $\varphi_{\Sigma}$ is a quantum group homomorphism and also that $C^\ast(H_{\Sigma})$ is a central subgroup of $G$ with $\Sigma\subseteq \widehat{G}$ as the corresponding subobject.      
\end{proof}
So, associated to any central subobject $\Sigma\subseteq \widehat{G}$, there exists a central subgroup of $G$, with the algebra generated by representatives of elements of $\Sigma$ giving its left/right coset space. In particular, for $\widetilde{\Sigma}$, we have the group $H_{\widetilde{\Sigma}}$, which can be regarded as the center of the compact quantum group $G = (A, \Phi)$. 
\begin{lem}
  Suppose we have compact quantum groups $G=(A,\Phi)$, $N_1=(B_1,\Psi_1)$, $N_2=(B_2,\Psi_2)$ and we further assume that all are maximal. Let $N_1$ and $N_2$ be normal subgroups of $G$ such that $\Sigma_{N_1} \subseteq \Sigma_{N_2}$ where $\Sigma_{N_i}$ is the subobject of $\widehat{G}$ corresponding to $N_i$ (and hence, consisting of equivalence classes of irreducible representations of $G$ that are direct sums of the trivial representation when restricted to $N_i$), $i=1,2$ . Then, $N_2$ is a normal subgroup of $N_1$.  Further, if the corresponding surjections are denoted $$\rho_i: A \to B_i,\; i =1, 2$$ then, the surjection $$\rho_0: B_1 \to B_2$$ satisfies:
\[\rho_2 = \rho_0 \circ \rho_1\]  
\end{lem}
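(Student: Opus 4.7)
The plan is to reduce everything to the construction of the surjection $\rho_0\colon B_1\to B_2$ satisfying $\rho_2=\rho_0\circ\rho_1$: once $\rho_0$ exists, it exhibits $N_2$ as a quantum subgroup of $N_1$, and since $N_2$ is already normal in $G$, the preceding lemma upgrades this to normality of $N_2$ in $N_1$. Thus the whole statement collapses to the verification of $\ker\rho_1\subseteq \ker\rho_2$.

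First I would record the inclusion $A_{G/N_1}\subseteq A_{G/N_2}$. Normality of $N_i$ together with the dichotomy of Theorem~2.7(3) yields the description used in the display preceding Proposition~4.4: on the canonical dense Hopf $\ast$-algebra $\mathcal{A}$, the conditional expectation $E_{G/N_i}$ is the projection sending $u^{\tau}_{jk}$ to itself when $\tau\in \Sigma_{N_i}$ and to $0$ otherwise, so $A_{G/N_i}\cap\mathcal{A}$ is linearly spanned by $\{u^{\tau}_{jk}:\tau\in \Sigma_{N_i},\;1\le j,k\le d_\tau\}$. The hypothesis $\Sigma_{N_1}\subseteq \Sigma_{N_2}$ then forces $A_{G/N_1}\cap \mathcal{A}\subseteq A_{G/N_2}\cap \mathcal{A}$, and taking closures (permitted by maximality) gives $A_{G/N_1}\subseteq A_{G/N_2}$ in $A$. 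Applying $\epsilon_G\otimes \id$ to the defining identity $(\id\otimes\rho_i)\Phi(a)=a\otimes 1$ for $a\in A_{G/N_i}$ yields $\rho_i(a)=\epsilon_G(a)\cdot 1_{B_i}$, so the augmentation ideal $A_{G/N_i}^{+}:=A_{G/N_i}\cap \ker\epsilon_G$ sits inside $\ker\rho_i$, and hence so does the two-sided ideal $A\cdot A_{G/N_i}^{+}$.

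The crux is then the Hopf-algebraic identity, valid for a normal quantum subgroup $N$ of a CQG,
\[
\ker(\rho|_{\mathcal{A}})\;=\;\mathcal{A}\cdot (A_{G/N}\cap\mathcal{A})^{+},
\]
which is the standard Hopf--Galois/faithful-flatness statement for normal Hopf subalgebras of cosemisimple Hopf algebras (in the same spirit as Lemma~2.1 of \cite{Fima} invoked for the preceding lemma, and as the structural results of \cite{Chiv}). Maximality of $G$ and $N_i$ upgrades this to $\ker\rho_i=\overline{A\cdot A_{G/N_i}^{+}}$ at the $c^{\ast}$-level, and combining with $A_{G/N_1}^{+}\subseteq A_{G/N_2}^{+}$ yields $\ker\rho_1\subseteq \ker\rho_2$.

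The construction of $\rho_0$ is now automatic: the universal property of the $c^{\ast}$-quotient $B_1=A/\ker\rho_1$ produces a unique surjective $\ast$-homomorphism $\rho_0\colon B_1\to B_2$ with $\rho_0\circ\rho_1=\rho_2$. That $\rho_0$ is a CQG morphism follows from a one-line chase using that $\rho_1$ and $\rho_2$ already are: for $a\in A$,
\[
(\rho_0\otimes\rho_0)\Psi_1(\rho_1(a))=(\rho_2\otimes\rho_2)\Phi(a)=\Psi_2\rho_2(a)=\Psi_2\rho_0(\rho_1(a)),
\]
and surjectivity of $\rho_1$ propagates this to all of $B_1$. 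Invoking the preceding lemma on $N_2\subseteq N_1\subseteq G$ with $N_2\triangleleft G$ gives $N_2\triangleleft N_1$.

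The main obstacle is the identity $\ker(\rho|_{\mathcal{A}})=\mathcal{A}\cdot (A_{G/N}\cap\mathcal{A})^{+}$ for a normal quantum subgroup $N$; everything else is bookkeeping. One can either cite the general Hopf--Galois theory for cosemisimple Hopf algebras, or give a self-contained derivation via Peter--Weyl, working one isotypic component at a time and using Wang's dichotomy to control the kernel of $\rho$ on each.
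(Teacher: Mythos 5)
Your proposal is correct and follows essentially the same route as the paper: both reduce the statement to the kernel inclusion $\ker\rho_1\subseteq\ker\rho_2$, define $\rho_0$ on the dense Hopf $\ast$-algebra by $\rho_0(\rho_1(a))=\rho_2(a)$, extend by maximality, verify the comultiplication intertwining by the same one-line chase, and obtain normality of $N_2$ in $N_1$ from Lemma 5.6. The only difference is that where the paper simply cites Lemma 4.4 of \cite{Wang-JFA} for the kernel inclusion, you derive it from the identity $\ker(\rho|_{\mathcal{A}})=\mathcal{A}\cdot(A_{G/N}\cap\mathcal{A})^{+}$ together with $A_{G/N_1}\subseteq A_{G/N_2}$, which is precisely the content of the cited result.
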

\begin{proof}
  We have that $\left.\rho_1\right|_{\mathcal{A}} = \mathcal{B}_1$ and  $\left.\rho_2\right|_{\mathcal{A}} = \mathcal{B}_2$. We define:
\[\rho_0: \mathcal{B}_1 \to \mathcal{B}_2\]
by $$\rho_0(\rho_1(a)) = \rho_2(a)\; \mathrm{for}\; a \in \mathcal{A}$$ 

This is well-defined, as $\ker(\rho_1) \subseteq \ker(\rho_2)$, which follows from Lemma 4.4 of \cite{Wang-JFA}. It is then easy to check that $\rho_0$ is a $\ast$-homomorphism, so can be extended to $B_1$. It is also surjective and is in fact a quantum group homomorphism. Normality follows from Lemma 5.6.
\end{proof}
So, it follows that any central subgroup of $G$ is in fact a subgroup of the center of that $G$. 

\begin{rem} 
Let $G=(A,\Phi)$ be a maximal compact quantum group. We consider the set of discrete groups
$$\bar{Z}(G):= \{F: C^{*}(F)\;\mathrm{is\; a\; central\; subgroup\; of\; G}\}$$ 
We say that $F_0\leq F_1$ if there exists a surjective map $$\rho_{0}^{1}:F_1\rightarrow F_0$$ such that the induced map $\widetilde{\rho_{0}^{1}} : C^{*}(F_1)\rightarrow C^{*}(F_0)$ has the property that \\
\centerline{\xymatrix{
A \ar[rd]^{\rho_0} \ar[r]^{\rho_1}
&C^{*}(F_1) \ar[d]^{\widetilde{\rho^{1}_{0}}}\\
&C^{*}(F_0)}}\\
commutes.

This then gives us an inverse system of discrete groups. One can directly show this without appealing to the previous proposition. It is easy to see that if we have $F_0\leq F_1\leq F_2$, with respective maps, $$\rho_{0}^{1}:F_1\rightarrow F_0$$ $$\rho_{1}^{2}:F_2\rightarrow F_1$$ and $$\rho_{0}^{2}:F_2\rightarrow F_1$$ then $$\rho_{0}^{2}=\rho_{0}^{1}\circ \rho_{1}^{2}$$
Now given $F_1$ and $F_2$ in $\bar{Z}(G)$, we want to show that there exists $F_0$ in $\bar{Z}(G)$ such that $F_1\leq F_0$ and $F_2\leq F_0$. We have surjections $$\rho_1 :A\rightarrow C^{*}(F_1)$$ $$\rho_2 :A\rightarrow C^{*}(F_2)$$
We consider the following map \\ 
\centerline{\xymatrixcolsep{3pc}\xymatrix{
A \ar[r]^-{\widetilde{\Phi}} &A\otimes_{\max} A \ar[r]^-{\rho_1\otimes_{\max}\rho_2} &C^*(F_1)\otimes_{\max}C^{*}(F_2)\cong C^{*}(F_1\times F_2)\\
}}\\ \\
Here $\widetilde{\Phi}$ denotes the extension of the map 
$$\Phi:\mathcal{A}\subseteq A\rightarrow \mathcal{A}\otimes_{\text{alg}}\mathcal{A}\subseteq A\otimes_{\max} A$$
We have to check that $(\rho_1\otimes_{\max}\rho_2)\widetilde{\Phi}$ is a quantum group homomorphism and that it is central. This is easy to check as $(F_1,\rho_1)$ and $(F_2,\rho_2)$ are central subgroups, so it is easily checked for matrix entries of irreducible representations of $G$.

However, the map need not be surjective. But we can consider its range, which is a cocommutative quantum group. Let $$\mathrm{Ran}(\rho_1\otimes_{\max}\rho_2)=C^{*}(F_0)$$
We now wish to show that $F_0\geq F_1$ and $F_0\geq F_2$. Let us show that $F_0\geq F_1$. So, we want to show that \\
\centerline{\xymatrix{
A \ar[rd]^{\rho_1} \ar[r]^{\Psi_0}
&C^{*}(F_0) \ar[d]^{\pi_1}\\
&C^{*}(F_1)}}\\
commutes. Here, $$\Psi_0=(\rho_1\otimes_{\max}\rho_2)\widetilde{\Phi}$$ and $$\pi_1:C^{*}(F_0)\rightarrow C^{*}(F_1)$$ is the restriction to $C^{*}(F_0)$ of the map 
$$\widetilde{\pi_1}:C^{*}(F_1\times F_2)\rightarrow C^*(F_1)$$ $$\delta_{g_1}\otimes\delta_{g_2}\mapsto \delta_{g_1}$$
Let $(u^{\tau}_{kl})$ be matrix entries of some arbitrary irreducible representation of $A$ such that 
$$\rho_2(u^{\tau}_{kl})= \begin{cases}
		\delta_g  & \text{if } k=l, g\in F_2 \\
		0 & \text{if } k\neq l
\end{cases}$$
This is possible as $F_2$ is a central subgroup of $(A,\Phi)$, invoking Theorem 6.3.
Then we have 
\begin{align*}
\pi_1(\rho_1\otimes\rho_2)\Phi(u^{\tau}_{kl})
=\pi_1(\rho_1\otimes \rho_2)\sum_j u_{kj}^{\tau}\otimes u_{jl}^{\tau}
=\pi_1(\rho_1(u_{kl}^{\tau})\otimes \delta_{g})
\end{align*}
But since $$\rho_1(u^{\tau}_{kl})=\lambda\cdot \delta_{g_1}$$ for some $g_1\in F_1$ and $\lambda\in \mathbb{C}$, we have that $$\pi_1(\rho_1(u_{kl}^{\tau})\otimes \delta_g)=\rho_1(u^{\tau}_{kl})$$ and so we have $F_0\geq F_1$ and similarly, $F_0\geq F_2$.

So, we have an inverse directed system and taking inverse limit, we get a discrete group $F=\varprojlim F_i$. This is of course the center of the compact quantum group $G$, as can be seen using Proposition 6.10. 
 \end{rem}

\section{Center Calculations}
We calculate the center of some compact quantum groups. The following theorem is helpful in many cases: 
\begin{thm}
Compact quantum groups having identical fusion rules have isomorphic center.    
\end{thm}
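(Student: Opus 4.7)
The plan is to show that every ingredient in the construction of the center depends \emph{only} on the fusion rules of $G$, not on the particular $C^\ast$-algebra $A$ realising it. Recall from Definition 6.4 that the equivalence relation defining $\Sigma$-cosets is $a \sim b \iff a\times \bar{b}\cap \Sigma \neq \emptyset$, where $a\times b$ denotes the decomposition of the tensor product into irreducibles, and $\bar{b}$ the conjugate. Both of these operations on $\widehat{G}$ are encoded in the fusion data. Likewise the property of a subobject $\Sigma \subseteq \widehat{G}$ being \emph{central} (Definition 6.5) is purely a statement about whether the $\Sigma$-cosets form a group under the induced product, which again depends only on $(\widehat{G}, \times, \bar{\;\;})$.

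Concretely, suppose $G_1 = (A_1,\Phi_1)$ and $G_2 = (A_2,\Phi_2)$ have identical fusion rules, so that there is a bijection $\phi:\widehat{G}_1 \to \widehat{G}_2$ with $\phi(a\times b) = \phi(a)\times \phi(b)$ and $\phi(\bar{a}) = \overline{\phi(a)}$. First I would verify that $\phi$ induces a bijection of posets
\[
\phi_\ast : \{\Sigma \subseteq \widehat{G}_1 \text{ central}\} \longrightarrow \{\Sigma \subseteq \widehat{G}_2 \text{ central}\},
\]
sending $\Sigma$ to $\phi(\Sigma)$, with the $\phi(\Sigma)$-cosets being exactly the images of $\Sigma$-cosets, and the coset groups canonically isomorphic as groups. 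Since $\phi_\ast$ preserves arbitrary intersections, applying Proposition 6.8 gives $\phi_\ast(\widetilde{\Sigma}_1) = \widetilde{\Sigma}_2$ and a group isomorphism
\[
H_{\widetilde{\Sigma}_1} \;=\; \widehat{G}_1/\widetilde{\Sigma}_1 \;\cong\; \widehat{G}_2/\widetilde{\Sigma}_2 \;=\; H_{\widetilde{\Sigma}_2}.
\]

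Finally, by Proposition 6.11 (and the discussion immediately after, identifying the center with $C^\ast(H_{\widetilde{\Sigma}})$ for the maximal version), the center of $G_i$ is the compact quantum group associated to the discrete group $H_{\widetilde{\Sigma}_i}$. Since a group isomorphism of discrete groups extends to a Hopf $C^\ast$-algebra isomorphism of their full group $C^\ast$-algebras, we obtain an isomorphism of centers.

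The only subtlety I anticipate is bookkeeping: one must check that the equivalence relation, the group structure on cosets when it exists, and the inclusion ordering on central subobjects are all transported by $\phi$ — but each of these is immediate from the fact that $\phi$ is an isomorphism of the fusion ring (with duality). There is no analytic obstruction because the center construction bypasses the $C^\ast$-algebra entirely and lands in the cocommutative world $C^\ast(H_{\widetilde{\Sigma}})$, where isomorphism is determined by the underlying discrete group.
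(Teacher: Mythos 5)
Your proposal is correct and follows essentially the same route as the paper: both arguments observe that centrality of a subobject $\Sigma\subseteq\widehat{G}$ and the coset-group construction are defined purely in terms of the fusion data $(\widehat{G},\times,\overline{\;\cdot\;})$, transport everything along the fusion-preserving bijection, and identify the centers via the resulting isomorphism of coset groups. Your write-up is in fact somewhat more careful than the paper's (which compresses all of this into a few lines), noting explicitly that the conjugation is preserved and that the final step reduces to an isomorphism of full group $C^\ast$-algebras of the discrete groups $H_{\widetilde{\Sigma}_i}$.
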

\begin{proof}
  Let $G_1 = (A, \Phi)$ and $G_2 = (B, \Psi)$ be two compact quantum groups with $\widehat{G_1}$ and $\widehat{G_2}$, the set of equivalence classes of irreducible representations respectively. Then, as $G_1$ and $G_2$ have identical fusion rules, hence, there exists a bijection: 
\[p: \widehat{G_1} \to \widehat{G_2}\]
such that for all $v, s \in \widehat{G_1}$, we have, $$p(v \times s) = p(v) \times p(s)$$ as subsets of $G_2$. Thus, $\Sigma \subseteq G_1$ is central if and only if $p(\Sigma) \subseteq \widehat{G_2}$ is central, as follows easily from Proposition 6.8. So, centers of $G_1$ and $G_2$ are isomorphic. 
\end{proof}
Thus, we have the following: 
\begin{enumerate}
\item The center of $SU_q(2)$ is $\mathbb{Z}_2$ for $-1 \leqslant q \leqslant 1$ and $q \neq 0$.
\item $SO_q(3)$ has trivial center, $-1 \leqslant q \leqslant 1$, and $q \neq 0$. 
\item $C^\ast(\Gamma)$, for any discrete subgroup $\Gamma$, has as center $\Gamma$. 
\item $B_u(Q)$ has the same fusion rules as $SU(2)$ \cite{Ban96} and so its center is $\mathbb{Z}_2$, where $Q$ is a $n \times n$ matrix with $Q \overline{Q}=cI_n'$. 
\item For $B$ a finite dimensional $c^\ast$-algebra with $\dim(B) \geq 4$ and $\tau$ the canonical trace on it, the compact quantum group $A_{\text{aut}}(B, \tau)$ \cite{Wang-CMP98} has the same fusion rules as $SO(3)$ \cite{Ban99} and so has trivial center.  
\end{enumerate}
\begin{pro}
  Let $G=(A, \Phi)$ be a compact quantum matrix group with $u$ being its fundamental representation. Assume that $u$ is irreducible, then $Z(G)$, the center of $G$, is either $\mathbb{Z}/n\mathbb{Z}$ for some $n \in \mathbb{Z}$ or $\mathbb{Z}$.   
\end{pro}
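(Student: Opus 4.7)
The plan is to extract from the CMQG hypothesis a single generator for $Z(G)$ as a group, from which cyclicity (and hence the stated classification) follows immediately.

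Recall from Proposition 6.11 and the subsequent discussion that the center $Z(G)$ may be identified with the quotient group $\widehat{G}/\widetilde{\Sigma}$, where $\widetilde{\Sigma}\subseteq\widehat{G}$ is the center subobject. Since $u$ is irreducible, its class $[u]\in\widehat{G}$ has a well-defined $\widetilde{\Sigma}$-coset, which I will call $\tau\in Z(G)$. The key step of the plan is to show that $\tau$ generates $Z(G)$ as a group.

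To establish this, I would take an arbitrary irreducible representation $v$ of $G$. By the CMQG hypothesis, $v$ is a subrepresentation of $u^{\otimes k}$ for some $k\geq 0$. By the definition of the product of $\widetilde{\Sigma}$-cosets (which is unambiguous precisely because $\widetilde{\Sigma}$ is a central subobject), the coset $\tau^{k}$ contains the class of every irreducible subrepresentation of $u^{\otimes k}$; in particular, it contains $[v]$, so $[v]\widetilde{\Sigma}=\tau^{k}$. Consequently every element of $Z(G)$ is of the form $\tau^{k}$, which exhibits $Z(G)=\langle\tau\rangle$ as a cyclic group. The standard classification of cyclic groups then forces $Z(G)\cong\mathbb{Z}/n\mathbb{Z}$ for some positive integer $n$ (when $\tau$ has finite order) or $Z(G)\cong\mathbb{Z}$ (when $\tau$ has infinite order).

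The only conceptual subtlety is the assertion that all irreducible subrepresentations of $u^{\otimes k}$ lie in a common $\widetilde{\Sigma}$-coset, but this is exactly the content of $\widetilde{\Sigma}$-cosets forming a group under set-wise multiplication, i.e.\ the defining property of a central subobject (Definition 6.6). So no real technical obstacle is expected; the proposition is essentially a direct translation of the CMQG property (``every irrep sits inside some tensor power of $u$'') into the language of the $\widetilde{\Sigma}$-coset group, upgraded by the trivial observation that a cyclic group is either finite cyclic or $\mathbb{Z}$.
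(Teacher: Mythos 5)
Your proof is correct and is essentially the paper's argument: both identify a single generator of $Z(G)$ coming from the class of $u$ and use the CMQG property that every irreducible embeds in some $u^{\otimes k}$ to conclude cyclicity. The only difference is cosmetic — the paper phrases this via the surjection $\rho$ onto the center and Theorem 6.3 (so the generator is the group element $\delta_g$ with $\rho(u)=\delta_g^{\oplus \dim u}$), whereas you work directly in the coset group $\widehat{G}/\widetilde{\Sigma}$; under the identification of Proposition 6.8 these are the same object.
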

\begin{proof}
  By Theorem 6.4, and the fact that $u$ is the fundamental representation and is irreducible, it follows that if $Z(G) =\Gamma$, then, $\Gamma$ is generated by $\delta_g$ where $$\rho(u) = \underbrace{\delta_g \oplus \dots \oplus \delta_g}_\text{dim(u)-times}$$
  where $\rho$ denotes the surjection from $G$ onto its center. Hence, $\Gamma$ is a quotient of $\mathbb{Z}$. 
\end{proof}
\begin{cor}
The compact quantum group $A_u(Q)$, $Q \in GL_n(\mathbb{C})$ has center $\mathbb{Z}$. \end{cor}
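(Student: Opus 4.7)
The plan is to invoke the preceding proposition to pin $Z(A_u(Q))$ down to being either $\mathbb{Z}/m\mathbb{Z}$ for some $m$ or $\mathbb{Z}$, and then rule out the finite case by exhibiting an explicit central subobject whose coset group is already $\mathbb{Z}$.

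The key input is the description of the fusion algebra of $A_u(Q)$ due to Banica \cite{Ban96}: the fundamental representation $u$ is irreducible, the irreducible representations of $A_u(Q)$ are indexed by the free monoid on two letters $\{\alpha, \beta\}$ with $\alpha = [u]$ and $\beta = [\bar{u}]$, and the tensor product of two such words is obtained by concatenation followed by successive cancellation of adjacent $\alpha\beta$ or $\beta\alpha$ pairs at the junction. I would first define $\phi : \widehat{A_u(Q)} \to \mathbb{Z}$ by $\phi(w) = \#\alpha(w) - \#\beta(w)$. Because each cancellation step removes a balanced pair, a direct inspection of the fusion rule shows that every irreducible constituent $c$ of $a \otimes b$ satisfies $\phi(c) = \phi(a) + \phi(b)$; moreover $\phi(\bar{a}) = -\phi(a)$ and $\phi$ vanishes on the trivial representation.

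I would then check that $\Sigma_0 := \phi^{-1}(0)$ is a central subobject of $\widehat{A_u(Q)}$ in the sense of Section 6. Containment of the trivial representation, closure under conjugation, and closure under taking subrepresentations of tensor products of its elements all follow from the properties of $\phi$ established above. Furthermore, two irreducibles $a,b$ lie in the same $\Sigma_0$-coset precisely when $\phi(a) = \phi(b)$, so $\phi$ descends to a bijection between $\Sigma_0$-cosets and $\mathbb{Z}$; under this identification the fusion product of cosets becomes addition in $\mathbb{Z}$, so the cosets form a group and $\Sigma_0$ is central.

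Since the center subobject $\widetilde{\Sigma}$ is the intersection of all central subobjects, $\widetilde{\Sigma} \subseteq \Sigma_0$, and this inclusion induces a surjection of coset groups $Z(A_u(Q)) \twoheadrightarrow \mathbb{Z}$. The only group of the form $\mathbb{Z}/m\mathbb{Z}$ or $\mathbb{Z}$ admitting such a surjection is $\mathbb{Z}$ itself, so combined with the preceding proposition this forces $Z(A_u(Q)) = \mathbb{Z}$. The bulk of the work in turning this into a full proof lies in the verification that Banica's fusion rule preserves the grading $\phi$; this reduces to the base observation that $u \otimes \bar{u} = \alpha\beta \oplus 1$ (and its mirror) is balanced, which is effectively already built into the fusion-algebra description of $A_u(Q)$.
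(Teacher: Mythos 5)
Your argument is correct and shares its skeleton with the paper's proof: both invoke Proposition 7.2 to conclude that the center is a quotient of $\mathbb{Z}$, and both then rule out the finite case by producing a central copy of $\mathbb{Z}$. The difference lies in how that copy is produced. The paper simply quotes Wang's result from \cite{Wang-JFA} that $C(S^1)=C^{\ast}(\mathbb{Z})$ is a central subgroup of $A_u(Q)$ (concretely, the surjection $u_{ij}\mapsto \delta_{ij}z$) and uses the fact from Section 6 that every central subgroup is a subgroup of the center, so the center surjects onto $\mathbb{Z}$. You instead work entirely on the dual side, extracting the central subobject $\Sigma_0=\phi^{-1}(0)$ from the grading $\phi=\#\alpha-\#\beta$ on the free fusion semiring; this is exactly the subobject corresponding to Wang's $C(S^1)$, so the two proofs identify the same central subgroup from opposite ends. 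Your route is more self-contained and fits the Tannakian spirit of Sections 6--7, and it yields an explicit description of the quotient map $\widehat{A_u(Q)}\to\mathbb{Z}$, but it consumes a strictly heavier input: you need the full classification of the fusion rules of $A_u(Q)$ (which is Banica's 1997 paper on the free $U(n)$, not \cite{Ban96}, which treats the orthogonal case $B_u(Q)$, and which is usually stated under a positivity hypothesis on $Q$), whereas the paper needs only the irreducibility of $u$ together with Wang's elementary verification that the defining relations of $A_u(Q)$ are satisfied by $\delta_{ij}z$.
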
 
\begin{proof}  
It is shown in \cite{Wang-JFA} that $A_u(Q)$ has $C(S^1)=C^{\ast}(\mathbb{Z})$ as a central subgroup. The result now follows from the previous lemma, as $u$, the fundamental representation of $A_u(Q)$, is irreducible.
\end{proof}

The notion of the chain group $c(G)$ for a given compact group $G$ was defined by Ba\"umgartel and Lledo \cite{Baum-L} and was shown by M\"uger in \cite{Muger} to be isomorphic to $\widehat{Z(G)}$, the dual group of the center $Z(G)$ of $G$. We prove that the same is true for compact quantum groups. 

\begin{defn}Given a compact quantum group $G=(A, \Phi)$, we define the chain group  $$c(G):=\widehat{G}/\sim_1$$ to be the group of equivalence classes of the equivalence relation $\sim_1$ defined for $X,Y\in \widehat{G}$ as follows : $$X \sim_1 Y$$ if and only if $$\exists n \in \mathbb{N}\; \mathrm{and}\; z_1,\dots, z_n \in \widehat{G}$$ such that $$ X \in z_1 \times \dots \times z_n \;\mathrm{and}\; Y \in z_1 \times \dots \times z_n$$ \end{defn} 
Let $$\mathcal{E}_z:=\{z\in \widehat{G}:z\sim_1 [1_G]\}$$
where $[1_G]\in \widehat{G}$ denotes the equivalence class of $1_G$, the trivial representation of $G$. It is then, straightforward to see that $\mathcal{E}_z$ is a subobject of $\widehat{G}$. 
\begin{pro}
  $\mathcal{E}_z$ is a central suboject of $\widehat{G}$, and $\widehat{G}/\mathcal{E}_z \cong \widehat{G}/\sim_1 \cong c(G)$.
\end{pro}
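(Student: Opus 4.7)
The plan is to identify the equivalence relation defining $\mathcal{E}_z$-cosets (Definition 6.6 with $\Sigma = \mathcal{E}_z$) with $\sim_1$ directly. Once this identification is in hand, both claims reduce to formal checks: centrality of $\mathcal{E}_z$ says that $\sim_1$-classes multiply well under $\times$, and the isomorphism $\widehat{G}/\mathcal{E}_z \cong \widehat{G}/\sim_1 = c(G)$ becomes tautological.

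First I would verify that $\mathcal{E}_z$ is a subobject of $\widehat{G}$: it contains $[1_G]$ by reflexivity of $\sim_1$, and closure under conjugation follows because if $z_1, \ldots, z_n$ witnesses $x \sim_1 [1_G]$, then $\bar{z}_n, \ldots, \bar{z}_1$ witnesses $\bar{x} \sim_1 [1_G]$, using $\overline{z_1 \times \cdots \times z_n} = \bar{z}_n \times \cdots \times \bar{z}_1$.

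The central step is to show $a \times \bar{b} \cap \mathcal{E}_z \neq \emptyset \iff a \sim_1 b$. For the forward direction, given $\sigma \in a \times \bar{b}$ with $\sigma \sim_1 [1_G]$ via a chain $w_1, \ldots, w_m$, Frobenius reciprocity in the rigid tensor $C^\ast$-category $\Rep(G)$ converts $\sigma \subseteq a \times \bar{b}$ into $a \subseteq \sigma \times b$, so both $a$ (via $\sigma \subseteq w_1 \times \cdots \times w_m$) and $b = [1_G] \times b$ appear in $w_1 \times \cdots \times w_m \times b$, witnessing $a \sim_1 b$. Conversely, if $a, b \subseteq z_1 \times \cdots \times z_n$, then both $a \times \bar{b}$ and $[1_G] \subseteq a \times \bar{a}$ sit inside the single tensor product $z_1 \times \cdots \times z_n \times \bar{z}_n \times \cdots \times \bar{z}_1$, so any irreducible $\sigma$ appearing in $a \times \bar{b}$ satisfies $\sigma \sim_1 [1_G]$, i.e.\ $\sigma \in \mathcal{E}_z$.

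From here everything falls out. For any $\sigma_1, \sigma_2 \in a \times b$ the chain $a, b$ itself witnesses $\sigma_1 \sim_1 \sigma_2$, so products of $\sim_1$-classes are well-defined; associativity is inherited from $\times$, the class $\mathcal{E}_z$ is the identity, and $[\bar{a}]$ inverts $[a]$ via $[1_G] \subseteq a \times \bar{a}$. This gives $\widehat{G}/\sim_1$ a group structure, and by the equivalence proved above this group coincides with the group of $\mathcal{E}_z$-cosets, showing simultaneously that $\mathcal{E}_z$ is central and that $\widehat{G}/\mathcal{E}_z \cong c(G)$. The one genuinely delicate point I anticipate is the forward direction of the central equivalence: because $\times$ on $\widehat{G}$ is noncommutative, one must invoke Frobenius reciprocity to pass from $\sigma \subseteq a \times \bar{b}$ to $a \subseteq \sigma \times b$, rather than pretend factors can be permuted.
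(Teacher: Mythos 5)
Your proof is correct and takes essentially the same route as the paper: both arguments reduce the proposition to showing that the relation $a\sim b$ of Definition 6.6 with $\Sigma=\mathcal{E}_z$ coincides with $\sim_1$, and then read off centrality and the isomorphism $\widehat{G}/\mathcal{E}_z\cong c(G)$. Your witnessing chains (via Frobenius reciprocity in the forward direction, and $z_1,\dots,z_n,\bar{z}_n,\dots,\bar{z}_1$ in the converse) differ only cosmetically from the paper's, and if anything track the conjugates more carefully.
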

\begin{proof}This will follow if we can show that, for $a, b \in \widehat{G}$, $a \sim b$ in the sense of Definition 6.6, with $\Sigma=\mathcal{E}_z$ if and only if $a \sim_1 b$. \\
$(\Rightarrow)$ Let for $a,b\in \widehat{G}$, $a\sim b$ in the sense of Definition 6.6. Then we have $a \times \overline{b} \cap \mathcal{E}_z \neq \emp$, so there exists $z_1, \dots, z_n\in \widehat{G}$ such that for some $k \in a \times \overline{b}$, we have $$k\in z_1 \times \dots \times z_n$$ and $$[1_G] \in z_1 \times \dots \times z_n$$ This implies $$[1_G] \in z_1 \times \dots \times z_n \times a \times \overline{b}$$ and $$a \times \overline{b} \subseteq z_1 \times \dots\times z_n \times a \times \overline{b}$$ So, $a\sim_1 b$ as $a\in z_1 \times \dots\times z_n \times a \times \overline{b}\times b$ and $b\in z_1 \times \dots\times z_n \times a \times \overline{b}\times b$\\\\
$(\Leftarrow)$ Let $a\sim_1 b$ for $a,b\in \widehat{G}$. Then $a\in z_1 \times \dots \times z_n$ and  $y\in z_1 \times \dots\times z_n$ for some $z_1, \dots, z_n \in \widehat{G}$. But then obviously,  $$1\in z_1 \times \dots \times z_n\times \overline{b}$$ and  $$a \times \overline{b}\subseteq z_1 \times \dots \times z_n \times \overline{b}$$
So, we have that $a\sim b$ in the sense of Definition 6.6 with $\Sigma=\mathcal{E}_z$. 
\end{proof}

Now to show that the chain group is indeed isomorphic to the center of the compact quantum group $G$, we have to show $$\widetilde{\Sigma}=\mathcal{E}_z$$ as subsets of $\widehat{G}$, where $\widetilde{\Sigma}$ denotes the center subobject of $G$. But by the previous proposition we have $\widetilde{\Sigma}\subseteq \mathcal{E}_z$. So, we want to show that $\mathcal{E}_z\subseteq \widetilde{\Sigma}$ which is true if and only if $\mathcal{E}_z\subseteq \Sigma$ for any central subobject $\Sigma$. This can be easily shown as if $a\in \mathcal{E}_z$, then there exist $z_1,...,z_n$ in $\widehat{G}$ such that 
$$1\in z_1\times z_2\times ... \times z_n$$ and
$$a\in z_1\times z_2\times ... \times z_n$$
But for any central subobject $\Sigma$, the $\Sigma$-cosets form a group, denoted by $H_{\Sigma}$, by Proposition 6.8. Now consider the product 
$$[z_1][z_2]...[z_n]$$
in $H_{\Sigma}$. This is the identity element of $H_{\Sigma}$. But then $a\in \Sigma$. 
So, we indeed have that the chain group of $G$ is isomorphic to the center of $G$.

\appendix 

\section{The Commutator Subgroup}
In this appendix, we briefly discuss the commutator subgroup of a compact quantum group.

For  classical compact groups, the subobject corresponding to the commutator subgroup is the one consisting of equivalence classes of all $1$-dimensional representations of the group. 

In case of a compact quantum group $G = (A, \Phi)$, it is easy to see that if we take the $c^\ast$-subalgebra $A_{\ab} \subseteq A$, generated by the $1$-dimensional representations of $G$, then $A_{\ab}$ is a  $c^\ast$-algebraic completion of $\mathbb{C}[G_{\ab}]$, where $G_{\ab}$ denotes the group generated by the $1$-dimensional representations, i.e. the group-like elements, of $G$. 

It is also easy to see that any quantum group homomorphism, 
\[\widetilde{\rho}: C^\ast(\Gamma) \to A\]
for some discrete group $\Gamma$, has image contained in $A_{\ab}$. So, this is considered to be the abelianisation of $G$. 

However, in contrast to the classical case, the subalgebra corresponding to the abelianisation of $G$ may fail to be the coset space of a normal subgroup of $G$.

Let $p\in\mathcal{A}\subseteq A$. Then the right adjoint action of $p$ on $A$ is given by 
$$p\curvearrowright q\mapsto \sum_{i}S(p^{(1)}_i)qp^{(2)}_i$$
where $\Phi(p)=\sum_i p^{(1)}_i\otimes p^{(2)}_i$ and $S$ is the antipode of $G$. Similarly, the left adjoint action of $p$ will take $q$ to $$\sum_i p^{(1)}_i qS(p^{(2)}_i)$$
It is not hard to see that given any subgroup of $G$, the left coset space (resp. right coset space) is invariant under the right adjoint action (resp. left adjoint action) of any $p\in \mathcal{A}$. 

In particular, the coset space of any normal subgroup of $G$ is necessarily invariant under the left as well as the right adjoint action of any $p\in \mathcal{A}$ (if a subalgebra $A_1$ of $A$ is invariant under the left (resp. right) adjoint action of all $p\in \mathcal{A}$, we say that $A_1$ is invariant under the left (resp. right) adjoint action of $G$). But, as has been observed in Remark 1.2.1 of \cite{Chiv}, the converse is also true.

Let $G_1=(A_1,\Phi_1)$ and $G_2=(A_2,\Phi_2)$ be two compact quantum groups. Wang in \cite{Wang-CMP95} constructed a compact quantum group structure on $A_1\ast A_2$, the full free product of $A_1$ and $A_2$, such that the embeddings of the $c^\ast$-algebras $A_1$ and $A_2$ in $A_1\ast A_2$ are quantum group morphisms and classified its irreducible representations in terms of those of $G_1$ and $G_2$. 

\begin{pro} Assuming that $G_1$ and $G_2$ are non-trivial compact quantum groups, the subalgebras $A_1$ and $A_2$ are not invariant under the right (or left) adjoint action of $G_1\ast G_2$. \end{pro}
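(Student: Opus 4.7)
The plan is to exhibit $q \in A_1$ and $p \in \mathcal{A}_2 \subseteq \mathcal{A}_1 \ast \mathcal{A}_2$ for which the right adjoint action $p \curvearrowright q$ falls outside $A_1$. Since $G_2$ is non-trivial, I can pick a non-zero $p \in \mathring{\mathcal{A}}_2 := \ker(\epsilon|_{\mathcal{A}_2})$, and likewise a non-zero $q \in \mathring{\mathcal{A}}_1$. Concretely, one may take an off-diagonal matrix entry of a non-trivial irreducible representation, or $\phi - 1$ for a non-trivial $1$-dimensional representation $\phi$.

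The key tool is the standard vector-space decomposition of the algebraic free product Hopf $\ast$-algebra,
\[\mathcal{A}_1 \ast \mathcal{A}_2 \;=\; \mathbb{C} 1 \;\oplus\; \bigoplus_{n \geq 1,\; i_1 \neq i_2 \neq \cdots \neq i_n} \mathring{\mathcal{A}}_{i_1} \otimes \cdots \otimes \mathring{\mathcal{A}}_{i_n},\]
under which $\mathcal{A}_1$ sits exactly as $\mathbb{C} 1 \oplus \mathring{\mathcal{A}}_1$. I then expand
\[p \curvearrowright q \;=\; \sum S(p^{(1)}) q\, p^{(2)}\]
by writing each $S(p^{(1)})$ and $p^{(2)}$ as a scalar plus a kernel-of-counit piece (with scalars $\epsilon(p^{(1)})$ and $\epsilon(p^{(2)})$, respectively, using $\epsilon \circ S = \epsilon$) and collecting the four resulting types of summands. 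After invoking $\epsilon(q) = \epsilon(p) = 0$ and the co-unit identity $\sum \epsilon(p^{(1)}) p^{(2)} = p$, the expression simplifies to
\[p \curvearrowright q \;=\; q \cdot p \;+\; S(p) \cdot q \;+\; \sum \widetilde{S(p^{(1)})}\, q\, \widetilde{p^{(2)}},\]
where tildes denote the kernel-of-counit parts and the three summands live in the pairwise-distinct subspaces $\mathring{\mathcal{A}}_1 \otimes \mathring{\mathcal{A}}_2$, $\mathring{\mathcal{A}}_2 \otimes \mathring{\mathcal{A}}_1$, and $\mathring{\mathcal{A}}_2 \otimes \mathring{\mathcal{A}}_1 \otimes \mathring{\mathcal{A}}_2$ of the decomposition. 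In particular the first summand is $q \otimes p \neq 0$, so it cannot cancel against the other two, and hence $p \curvearrowright q \notin \mathcal{A}_1 \subseteq A_1$.

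The symmetric argument (exchanging indices $1$ and $2$) shows $A_2$ is likewise not invariant under the right adjoint action, and the left adjoint action is handled by the same reasoning up to reversing the order of factors in each product. The only technical point requiring care is the alternating-word decomposition of $\mathcal{A}_1 \ast \mathcal{A}_2$; this is a standard fact about unital free products of algebras and is in any case compatible with Wang's classification of irreducibles of $G_1 \ast G_2$ from \cite{Wang-CMP95}. Since the whole argument takes place at the level of the Hopf $\ast$-algebra $\mathcal{A}_1 \ast \mathcal{A}_2$, no $C^\ast$-completion subtleties intervene.
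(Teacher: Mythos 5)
Your argument is correct, but it follows a genuinely different route from the paper's. The paper picks matrix entries ${}^1u^{\alpha}_{ij}\in A_1$ and ${}^2u^{\beta}_{kl}\in A_2$ of non-trivial irreducibles, observes via Theorem 3.10 of \cite{Wang-CMP95} that $\sum_m ({}^2u^{\beta}_{mk})^\ast\, {}^1u^{\alpha}_{ij}\, {}^2u^{\beta}_{ml}$ is a sum of matrix entries of a ``new'' irreducible of $G_1\ast G_2$ (an alternating word $\bar\beta\otimes\alpha\otimes\beta$), and concludes from linear independence of matrix coefficients of inequivalent irreducibles. You instead work with arbitrary non-zero $p\in\ker(\epsilon|_{\mathcal{A}_2})$, $q\in\ker(\epsilon|_{\mathcal{A}_1})$, expand $\sum S(p^{(1)})\,q\,p^{(2)}$ against the alternating-word decomposition of the algebraic free product, and isolate the non-vanishing component $q\otimes p$ in the $(1,2)$-summand; your Hopf-algebraic computation ($\epsilon(p)=\epsilon(q)=0$, $\epsilon\circ S=\epsilon$, the counit identity) checks out, and the three residual terms do land in pairwise distinct summands, so no cancellation can occur. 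Your approach is more elementary and more general --- it needs only the vector-space structure of the unital free product and none of Wang's classification of irreducibles, and it would apply verbatim to free products of arbitrary CQG algebras --- whereas the paper's is shorter once Wang's theorem is granted and stays closer to the representation-theoretic language used throughout. Both proofs share the same small implicit step at the end: having shown the image is not in $\mathcal{A}_1$, one still needs $A_1\cap(\mathcal{A}_1\ast\mathcal{A}_2)=\mathcal{A}_1$ (e.g.\ via the norm-continuous isotypic projections) to conclude it is not in the $c^\ast$-algebra $A_1$; neither you nor the paper spells this out, so it is not a defect of your proposal relative to the original.
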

\begin{proof} The intuition comes from the co-commutative case, as when $G_1$ and $G_2$ are co-commutative, the result follows from the fact that if $\Gamma_1$ and $\Gamma_2$ are non-trivial discrete groups, then in $\Gamma=\Gamma_1\ast \Gamma_2$, neither of the $\Gamma_i, i=1,2$, is a normal subgroup.

More generally, let $G_1$ and $G_2$ be any two non-trivial compact quantum groups. Let ${}^1u^{\alpha}_{ij}\in A_1$ be a matrix entry of a representative of some $\alpha\in \widehat{G_1}$ and let ${}^2u^{\beta}_{kl}$ be the matrix entry of a representative of some $\beta \in \widehat{G_2}$. Then for ${}^1u^{\alpha}_{ij},{}^2u^{\beta}_{kl}\in A_1\ast A_2$ 
\begin{equation}{}^2u^{\beta}_{kl}\curvearrowright {}^1u^{\alpha}_{ij}\mapsto \sum_m ({}^2u^{\beta}_{mk})^\ast ({}^1u^{\alpha}_{ij})({}^2u^{\beta}_{ml})\end{equation}
Now ${}^1u^{\alpha}_{ij}\in A_1\subseteq A_1\ast A_2$ but by Theorem 3.10 of \cite{Wang-CMP95}, the right hand side of (A.1) is a sum of matrix entries of some irreducible representation of the compact quantum group $G=G_1\ast G_2$, so it follows from the linear independence of matrix entries of irreducible representations that $A_1$ is not invariant under the right adjoint action of $G$. 

Similarly, one can prove the result for $A_2$ and for left adjoint action.  
    \end{proof}
Now, taking $G_1$ to be a compact quantum group with no non-trivial $1$-dimensional representation and $G_2=C^\ast(\Gamma)$, for some discrete group $\Gamma$, it follows from Theorem 3.10 of \cite{Wang-CMP95} that for $G=G_1\ast G_2$, $G_{\ab}=\Gamma$, so $A_{\ab}=C^\ast(\Gamma)$. But by the previous proposition, $A_{\ab}$ cannot be the coset space of some normal subgroup of $G$ as it is not closed under the right adjoint action of $G$. 
\begin{rem} Interestingly, the free product construction is also used in \cite{Pinz} to show that the identity component is not normal in general, unlike the classical case.  \end{rem}

We end by noting a deviation from the classical theory in case of $A_u(Q), Q\in GL(n,\mathbb{C})$. 

For compact connected Lie groups, it is known that the intersection between the center and the commutator subgroup is finite. In particular, if the commutator subgroup is the whole group, then the center is finite and conversely, if the center is finite, the commutator subgroup is the whole group. Such Lie groups are called semisimple.

However, in contrast to the classical case, we have, for $A_u(Q)$, which is connected, the commutator subgroup is the whole of $A_u(Q)$, as the only one dimensional representation is the trivial representation \cite{Wang-JOT} but the center, $\mathbb{Z}$, is infinite. 

\section*{Acknowledgements}
The author wishes to thank his supervisor A. Mohari for constant encouragement, P. Sankaran, K.N. Raghavan, P. Chatterjee, A. Prasad and P.S. Chakraborty for discussions and comments, K. Rajkumar, K. Mukherjee and S. Guin for patient hearing and words of encouragement, K. Sampath for all the help with typesetting.

Several ideas of this paper became clear when the author attended the conference Non-Commutative Geometry and Quantum Groups (NCGQG) 2012 at Oslo, the author wishes to thank the organisers E. Bedos, M. Landstad, N. Larsen, S. Neshveyev and L. Tuset for financial support to attend the conference. Thanks are also due to all participants of this conference for several fruitful conversations. The author especially wishes to thank C. Pinzari and Y. Arano.  

Last but not the least, the author thanks all members of his institute, in particular the library staff, for creating a wonderful atmosphere for research.

\end{document}